    \def\la{\lambda}
\newcommand{\mycontentsbox}{%
{
\addtolength{\parskip}{-2.3pt}
\tableofcontents}}
\def\enddoc@text{\ifx\@empty\@translators \else\@settranslators\fi
\ifx\@empty\addresses \else\@setaddresses\fi
\newpage\mycontentsbox 
}
  \newcommand{\usim}{\stackrel{u}{\thicksim}}
  \def\barvb{block-Arveson boundary}
  \def\BarvB{Block-Arveson Boundary}
\numberwithin{equation}{section}
\newtheorem{thm}{Theorem}[section]
\newtheorem{theorem}[thm]{Theorem}
\newtheorem{cor}[thm]{Corollary}
\newtheorem{lemma}[thm]{Lemma}
\newtheorem{prop}[thm]{Proposition}
\theoremstyle{definition}
\newtheorem{conj}         [thm]{Conjecture}
\newtheorem{remark}[thm]{Remark}
\newtheorem{example}[thm]{Example}
  \newcommand{\vertiii}[1]{{\left\vert\kern-0.25ex\left\vert\kern-0.25ex\left\vert #1
    \right\vert\kern-0.25ex\right\vert\kern-0.25ex\right\vert}}
\def\CR{ \color{red} }
\def\tX{\tilde X}
\def\pt{\partial}
\def\euc{\partial^{\rm Euc}}
\def\mco{\textrm{\rm co$^{\rm mat}$}}
\def\cmco{\overline{\textrm{\rm co}}^{\rm mat}}
\def\ben{\begin{enumerate} }%
\def\een{\end{enumerate}}
\def\blem{\begin{lemma}}
\def\elem{\end{lemma}}
\def\bem{\begin{pmatrix}}
\def\eem{\end{pmatrix}}
\def\beq{\begin{equation}}
\def\eeq{\end{equation}}
\renewcommand{\qedsymbol}{\rule[.12ex]{1.2ex}{1.2ex}}
\renewcommand{\subset}{\subseteq}
\renewcommand{\supset}{\supseteq}
\newcommand{\cEm}{\pt^{\rm mat}}
\newcommand{\cEv}{\pt^{\rm van}}
 \def\tX{\tilde X}
\newcommand{\df}[1]{{\bf{#1}}{\index{#1}}}
\def\arv{\partial^{\rm arv}}
\def\arvb{Arveson boundary}
\def\tY{{\tilde Y}}
\def\tS{{\tilde S}}
\def\tW{{\tilde W}}
\def\cC{\mathcal C}
\def\cK{\mathscr K}
\def\cD{\mathcal D}
\def\al{\alpha}
\def\be{\beta}
\def\ga{\gamma}
\def\de{\delta}
\def\epsilon{\varepsilon}
\def\bep{\proof}
\def\eep{\qed}
\def\bec{\begin{conj}}
\def\eec{\end{conj}}
\def\bex{\begin{example}}
\def\eex{\end{example}}
\def\de{\delta}
\def\mS{{\mathbb S}}
\def\smatmg{\mathbb S_m^g}
\def\smatg{\mathbb S^g}
\def\smatng{\mathbb S_n^g}
\def\smatmg{\mathbb S_m^g}
\def\smatdg{\mathbb S_d^g}
\def\matng{(\mathbb C^{n\times n})^g}
\def\smatg{\mathbb S^g}
\def\cS{\mathcal S}
\def\cH{\mathscr H}
\def\hTV{{ \widehat{\cD_p} }}
\DeclareMathOperator{\rg}{rg}
\def\R{ {\mathbb{R}} }
\def\C{ {\mathbb{C}} }
\def\N{ {\mathbb{N}} }
\def\La{\Lambda}
\def\pt{\partial}
\def\la{\lambda}
\def\tY{\tilde{Y}}
\def\pt{\partial}
\def\epsilon{\varepsilon}
\def\Om{\Omega}
\def\Ga{{\Gamma}}
\def\La{\Lambda}
\def\Na{\mathscr N}
\def\smatgpg{\mathbb S_{g+1}^g}
\def\cT{{\mathcal T}}
\def\smatgone{\mathbb S^{g+1}}
\def\bfo{\mathbf 1}
\def\bA{\mathbf A}
\def\bS{\mathbf S}
\def\free{\partial^{\operatorname{abs}}}
\def\comat{\operatorname{co}^{\operatorname{mat}}}
\def\cW{\mathcal W}
\title[Extreme points of matrix convex sets
 and dilation theory]{Extreme points of matrix convex sets,\\[1mm] free spectrahedra  and dilation theory}
\author[E. Evert]{Eric Evert}
\address{Eric Evert, Group Science, Engineering and Technology\\
 KU Leuven Kulak \\
  E. Sabbelaan 53, 8500 Kortrijk, Belgium \\
  and
  \newline
   Electrical Engineering ESAT/STADIUS\\
  KU Leuven \\
  Kasteelpark Arenberg 10, 3001 Leuven, Belgium
   }
   \email{eric.evert@kuleuven.be}
\author[J.W. Helton]{J. William Helton${}^1$}
\address{J. William Helton, Department of Mathematics\\
  University of California \\
  San Diego}
\email{helton@math.ucsd.edu}
\thanks{${}^1$Research supported by the National Science Foundation (NSF) grant
DMS 1201498, and the Ford Motor Co.}
\author[I. Klep]{Igor Klep${}^{2}$}
\address{Igor Klep, Department of Mathematics,
 University of Ljubljana, Slovenia}
\email{igor.klep@fmf.uni-lj.si}
\thanks{${}^2$Supported by the Marsden Fund Council of the Royal Society of New Zealand. Partially supported by
the Slovenian Research Agency grants P1-0222 and L1-6722.}
\author[S. McCullough]{Scott McCullough${}^3$}
\address{Scott McCullough, Department of Mathematics\\
  University of Florida\\ Gainesville 
   }
   \email{sam@math.ufl.edu}
\thanks{${}^3$Research supported by the NSF grant DMS-1361501.}
\subjclass[2010]{Primary 47L07, 13J30. Secondary 46L07, 90C22}
\date{\today}
\keywords{matrix convex set, extreme point, dilation theory, linear matrix inequality (LMI), spectrahedron,
semialgebraic set, free real algebraic geometry}
\begin{document}

\begin{abstract}
For matrix convex sets a  unified  geometric interpretation of notions of extreme points
and of Arveson boundary points
is given.  These notions include, in increasing order of strength, the core notions of ``Euclidean'' extreme points, ``matrix'' extreme points, and ``absolute'' extreme points.
A seemingly different notion, the ``Arveson boundary'',
has by contrast a dilation theoretic flavor. An Arveson boundary point is
an analog of a (not necessarily irreducible) boundary representation for an operator system.  This article provides and explores dilation theoretic formulations for the above notions of extreme points.\looseness=-1

 The scalar solution set of a linear matrix inequality (LMI) is known as a spectrahedron.  The matricial solution set of an LMI is a free spectrahedron. Spectrahedra (resp. free spectrahedra) lie between general convex sets (resp. matrix convex sets) and convex polyhedra (resp. free polyhedra).  As applications
of our theorems on extreme points, it is shown the polar dual of a matrix convex set  $K$  is generated, as a matrix convex set,  by finitely many Arveson boundary points if and only if  $K$ is a free spectrahedron; and if the polar dual of a free spectrahedron $K$ is again a free spectrahedron, then at the scalar level $K$ is a polyhedron.

{\CR This version of the manuscript has been updated to address two dropped hypotheses and minor typos found in previous versions. The flaws were in Theorem \ref{thm:intromainext} \eqref{it:Euclidean-geometric} and Proposition \ref{prop:polyhedron} and were pointed out to us by Benjamin Passer and Tom-Lukas Kriel, respectively. We have added the correct hypotheses in the affected results with proofs, an example, and discussion in a new section, Section \ref{sec:RealFreeDuals}. 
} 
\end{abstract}

\maketitle

\setcounter{tocdepth}{3}
\contentsmargin{2.55em}
\dottedcontents{section}[3.8em]{}{2.3em}{.4pc}
\dottedcontents{subsection}[6.1em]{}{3.2em}{.4pc}
\dottedcontents{subsubsection}[8.4em]{}{4.1em}{.4pc}

\section{Introduction}
Spectrahedra, the solution sets of linear matrix inequalities (LMIs), play a central role in semidefinite programming, convex optimization and in real algebraic geometry \cite{BPR13,Nem06}.  They also figure prominently in the study of determinantal representations \cite{Bra11,NT12,Vin93}, the solution of the Lax conjecture \cite{HV07} and in the solution of the Kadison-Singer paving conjecture \cite{MSS15}.  The use of LMIs is a major advance in systems engineering in the past two decades \cite{BGFB94,SIG97}. Free spectrahedra, obtained by substituting matrix tuples instead of scalar tuples into an LMI, arise canonically in the theory of operator algebras, systems and spaces and the theory of matrix convex sets. Indeed, free spectrahedra are the prototypical examples of matrix convex sets over $\R^g$.  They also appear in  systems engineering, particularly in problems governed by a signal flow diagram (see \cite{dOHMP09}).

Extreme points are an important topic in convexity; they lie on the boundary of a convex set and capture
many of its properties \cite{Bar02}.
For a spectrahedron Ramana-Goldman
\cite{RG95}
 gave a basic characterization
of its Euclidean or classical extreme points.
For  matrix convex sets,
 operator algebras, systems, and operator spaces,
 it is natural to consider {\it quantized} analogs of the notion of an extreme point.
In \cite{WW99} the notion of a matrix extreme point of a compact matrix convex set $K$  was introduced and their result that the matrix extreme points span in the sense that their  closed matrix convex hull is $K$ (see also \cite{F04}) is now a foundation of the theory of matrix convex sets.
However, a proper subset of matrix extreme points might also have the spanning property.
One smaller class (introduced by Kleski \cite{Kls+}),  we call absolute extreme points,
 is closely related to a highly classical object, the Arveson boundary.
 \looseness=-1

 For operator algebras, systems and spaces {\it in infinite dimensions}
 Arveson's notion
\cite{Arv69} of an (irreducible) boundary representation
 (introduced as a noncommutative analog of peak points of function algebras)  is entirely satisfactory  \cite{Ham79,DM05,Arv08,DK15,FHL+} in that they span
the set  of which they are the boundary.
For matrix convex sets generally and free spectrahedra in particular,
 where the action takes place at  matrix levels and does not pass to operators, the situation is less clear. In the finite-dimensional context it is not known whether there are sufficiently many Arveson boundary points
 (or absolute extreme points) of a set to span the set.
 Indeed, the issue of whether there is a  natural notion of
quantized extreme points
for matrix convex sets that
is minimal (w.r.t.~spanning) remains unresolved (see for instance the discussion in \cite{F04}).
Fritz, Netzer and Thom \cite{FNT+}
use extreme points to investigate when
 an abstract operator system has a finite-dimensional concrete realization.\looseness=-1

In this article, in the context of matrix convex sets over $\R^g$, we provide geometric  unified interpretations of Arveson boundary points,
absolute extreme points, matrix extreme points and Euclidean  extreme points, giving them all dilation-theoretic interpretations (see Theorem \ref{thm:intromainext}). This theory of extreme points occupies the majority of the paper.

Next we give some applications of this theory.
We establish, in Theorem \ref{thm:weak-kleski} an analog of a result of Kleski \cite[Corollary 2.4]{Kls+}: a matrix convex set $K$ over $\R^g$ is spanned by finitely many of its Arveson boundary points  if and only if the polar dual $K^\circ$ is a free spectrahedron. As a consequence, in Corollary \ref{cor:hardtobepolar} we show if the polar dual of a  free spectrahedron $K$ is again a free spectrahedron, then at the scalar level $K$ is a polyhedron.  Further we show the spin disk \cite{HKMS+,DDSS+} in two variables provides a  non-trivial example of a free spectrahedron that is spanned by its Arveson boundary.
In another direction, we show using the Arveson boundary
that a natural construction of a matrix convex hull fails.

In the remainder of this introduction, we state, with some precision, our main results along the way introducing the necessary notations and definitions.

\subsection{Notation}
Given positive integers $g$ and $n$, let  \df{$\smatng$}
 denote the set of $g$-tuples $X=(X_1,\dots,X_g)$ of complex $n\times n$ self-adjoint matrices
  and let \df{$\smatg$} denote the sequence $(\smatng)_n$.
  A subset $\Gamma\subset \smatg$ is a sequence
  $\Gamma=(\Gamma(n))_n$ such that $\Gamma(n)\subset \smatng$ for each $n$.
  The set $\Gamma$  \df{closed with respect to direct sums} if for each pair of positive integers $m$ and $n$ and each
 $X\in \Gamma(n)$ and   $Y\in \Gamma(m)$,
\begin{equation}
 \label{eq:dirsums}
   X\oplus Y:= \big( \begin{pmatrix} X_1 & 0 \\ 0 & Y_1\end{pmatrix}, \dots, \begin{pmatrix} X_g & 0 \\ 0 & Y_g \end{pmatrix} \big) \in \Gamma(n+m).
\end{equation}
 Likewise $\Gamma$ is \df{closed with respect to unitary similarity}  if
\[
 U^* X U := (U^* X_1 U, \dots, U^* X_g U)\in\Gamma(n)
\]
 for each positive integer $n$, each $n\times n$ unitary matrix $U$ and each $X\in \Gamma(n)$.  A subset  $\Gamma\subset\smatg$ is a \df{graded set} if it is closed with respect to  direct sums and it is a \df{free set} if it is also closed with respect to unitary similarity.
 Often naturally occurring free sets are also closed with respect to restrictions to reducing subspaces. A free set $\Gamma$ is
\df{fully free} if it is
 \df{closed with respect to reducing subspaces}: if $X\in\Gamma(n)$ and $\cH\subset \C^n$ is reducing for $X$ of dimension $m$ and
the isometry
 $V:\C^m\to \C^n$ has range $\cH$, then  $V^*XV \in \Gamma(m).$  Free semialgebraic sets (see \cite{Lasse,HM12} for instance), namely the positivity sets of  free matrix-valued symmetric polynomials are fully free.
The set $\Gamma\subset\smatg$ is
 {\bf bounded}\index{uniformly bounded} \index{bounded, uniformly} if there is a $C\in\R_{>0}$ such that $C-\sum X_j^2 \succeq 0$
  for all $X\in\Gamma$.   We call $\Gamma$ \df{closed} (resp. \df{open}, \df{compact})   if each $\Gamma(n)\subseteq\smatng$ is closed (resp. open, compact).
 We refer the reader to
 \cite{Voi10,KVV+,MS11,Pope10,AM+,BB07,BKP16}
for a systematic study of free sets and free function theory.\looseness=-1

\subsection{Matrix convex sets and extreme points}
A  tuple $X\in\smatng$ is a \df{matrix convex combination} of
 tuples $Y^1,\dots,Y^N$ with $Y^\ell \in\mathbb S_{n_\ell}^g$ if
 there exist  $V_\ell:\mathbb C^n \to\mathbb C^{n_\ell}$ with
\beq
\label{eq:SumY}
  X = \sum_{j=1}^N V_\ell ^* Y^\ell  V_\ell
\qquad\text{and}\qquad
 \sum_{j=1}^N V_\ell^* V_\ell  = I_n.
\eeq
 The matrix convex combination \eqref{eq:SumY}
is \df{proper} provided each $V_\ell$ is surjective. In this case, $n\ge n_\ell$ for each $\ell$.
 We will say that a convex combination of the form \eqref{eq:SumY} is \df{weakly proper} if all of the $V_j$ are nonzero.

A graded set $K$ is \df{matrix convex} if it is closed
under matrix convex combinations.
Equivalently, $K$ is matrix convex if it is graded and for each pair of positive integers $m\le n$, each $X\in K(n)$ and each isometry $V:\C^m\to \C^n$, the tuple $V^*XV\in K(m)$; i.e., $K$ is closed with respect to \df{isometric conjugation}. In particular, a matrix convex set is a free set.

 Suppose $K$ is a free set. A tuple  $X \in K(n)$  is a \df{matrix extreme point}
  of the matrix convex set $K$
 if whenever it is represented as a proper matrix combination of the form
\eqref{eq:SumY} with $Y^\ell \in K(n_\ell)$,
then $n=n_\ell$ and  $X \usim  Y^\ell$ for each $\ell$.
(We use $A\usim B$ to denote $A$ and $B$ are unitarily equivalent.)
 A tuple  $X \in K(n)$  is an \df{absolute extreme point}  (a {\it boundary point} in the terminology of \cite{Kls+})  of $K$
 if whenever it is represented as a weakly proper matrix combination of the form \eqref{eq:SumY},  then
  for each $j$ either $n_j \leq n$ and $X \usim  Y^j$ (and hence $n_j=n$),
 or $n_j>n$ and there exists a $Z^j  \in K$  such that $Y^j \usim  X\oplus Z^j$.

There is a different, dilation-theoretic viewpoint of matrix extreme points.
 Given  a $g$-tuple $\al$ of $n\times m$ matrices and $\be\in\smatmg,$ let
\begin{equation}
 \label{eq:Z}
 Z =\begin{pmatrix} X & \al\\ \al^* & \be\end{pmatrix}.
\end{equation}
As a canonical  geometric notion of an Arveson boundary representation, we say $X$ is an \df{Arveson boundary point} of $K$ if and only if $Z\in K(n+m)$ implies $\al=0$.
 The tuple  $Z$ is  $n$-\df{block diagonalizable}
  provided there exists a $k$, integers $n_j\le n$ and tuples  $E^j\in \mS_{n_j}^g$ such that
  $Z \usim  E^1 \oplus E^2 \oplus  \cdots \oplus E^k.$

Our main general theorem is as follows.

\begin{theorem}
 \label{thm:intromainext}
 Suppose $K$ is a fully free set, $n$ is a positive integer and $X\in K(n)$.

 \begin{enumerate}[\rm (1)]
  \item
   \label{it:Euclidean-geometric}
     $X$ is  a Euclidean extreme point of $K(n)$ if and only if, if  $\al,\be\in \smatng$ {\CR and $\be=X$ or $K$ is a free spectrahedron, see Section \ref{sec:FreeSpecDefs},}   and $Z$ as in equation \eqref{eq:Z} is in $K(2n)$, then  $\al=0$;
  \item
   \label{it:0arv}
     $X$ is a matrix extreme point of $K$ if and only if $X$ is irreducible and for each positive integer $m$, $g$-tuple $\al$ of $n\times m$ matrices and tuple $\be\in \smatmg$  if $Z$ is in $K(n+m)$ and is $n$-block diagonalizable, then $\al=0$;
  \item
   \label{it:wearekleski}
     $X$ is an absolute extreme point of $K$ if and only if  $X$ is irreducible and in the Arveson boundary of $K$.
 \end{enumerate}
\end{theorem}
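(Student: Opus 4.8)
My plan is to obtain all three equivalences from a single passage between matrix convex combinations and dilations of the shape \eqref{eq:Z}. Suppose $X\in K(n)$ is written as a matrix convex combination $X=\sum_{\ell=1}^N V_\ell^*Y^\ell V_\ell$ with $\sum_\ell V_\ell^*V_\ell=I_n$ and $Y^\ell\in K(n_\ell)$. Setting $Y=\bigoplus_\ell Y^\ell\in K$ and $V=\operatorname{col}(V_1,\dots,V_N)$ gives an isometry $V$ with $X=V^*YV$; completing $V$ to a unitary $[\,V\ \ W\,]$ yields
\[
[\,V\ \ W\,]^*\,Y\,[\,V\ \ W\,]=\begin{pmatrix} X & V^*YW\\ W^*YV & W^*YW\end{pmatrix}\in K,
\]
a dilation of $X$ of the form \eqref{eq:Z} with corner $\al=V^*YW$, and $\al=0$ holds exactly when $\operatorname{ran}V$ reduces $Y$, in which case $Y\usim X\oplus\be$ with $\be=W^*YW\in K$ by full freeness. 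Read backwards, any $Z\in K$ as in \eqref{eq:Z} exhibits $X=V^*ZV$ for $V=\operatorname{col}(I_n,0)$. Each of the three statements is this correspondence restricted to a family of allowed decompositions: two equal $n\times n$ blocks for (1), an $n$-block diagonalizable $Z$ for (2), an arbitrary $Z$ for (3).

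For part (1): if $X$ is not a Euclidean extreme point, write $X=\tfrac12(X+D)+\tfrac12(X-D)$ with $0\ne D\in\smatng$ and $X\pm D\in K(n)$; then $(X+D)\oplus(X-D)\in K(2n)$, and conjugating by $\tfrac1{\sqrt2}\left(\begin{smallmatrix} I&I\\ I&-I\end{smallmatrix}\right)$ produces $\left(\begin{smallmatrix} X&D\\ D&X\end{smallmatrix}\right)\in K(2n)$, a dilation with $\al=D\ne0$. For the converse I would compress a given dilation $Z\in K(2n)$ along $V_\phi=\left(\begin{smallmatrix}\cos\phi\,I\\ \sin\phi\,I\end{smallmatrix}\right)$ to land in $K(n)$, and combine these compressions (together with the reflection $\al\mapsto-\al$ obtained by conjugating $Z$ with $I\oplus(-I)$) with convexity of $K(n)$ to display $X$ as the midpoint of two distinct points of $K(n)$; the delicate point is landing on $X$ exactly. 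This is transparent when the lower-right corner $\be$ equals $X$, the compressions then being simply $X+\sin2\phi\,\al$, so I would first reduce to that normalization and treat the cancellation of the extra $\be$-direction as the point needing attention.

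Parts (2) and (3) share one architecture. Irreducibility is cheap: if $X\usim X_1\oplus X_2$ nontrivially, then $X=\sum_i P_i^*X_iP_i$ (with $P_i$ the block projections, after a unitary) is a proper combination with $n_i<n$, forbidden for matrix and for absolute extreme points. For ``extreme $\Rightarrow$ dilation condition'' one starts from a proper (resp.\ weakly proper) combination of $X$, builds $Z$ by the mechanism above, applies the hypothesis that $n$-block diagonalizable (resp.\ all) dilations have $\al=0$, and gets $Y=\bigoplus_\ell Y^\ell\usim X\oplus\be$. Now irreducibility does the real work: since $\operatorname{ran}V$ is a subrepresentation of $\bigoplus_\ell Y^\ell$ isomorphic to the irreducible $X$, it lies in the isotypic summand of type $X$, which is $\bigoplus_\ell\big(X\text{-isotypic summand of }Y^\ell\big)$; and $P_\ell V=V_\ell\ne0$ forces each $Y^\ell$ to contain a copy of $X$, hence $n_\ell\ge n$. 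In the proper case $n_\ell\le n$ too, so $Y^\ell\usim X$ (matrix extreme); in general $Y^\ell\usim X\oplus Z^\ell$ with $Z^\ell\in K$ by full freeness (absolute extreme). The converse runs the other way: from $Z\in K$ as in \eqref{eq:Z} (taken $n$-block diagonalizable in part (2)), write $X=V^*ZV$, decompose $Z$ into irreducible summands (each in $K$ by full freeness), pull this through to a matrix convex combination of those summands, and feed it to the extreme-point hypothesis to force $\al=0$.

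The main obstacle is precisely this last step. Pulling $X=V^*ZV$ through a decomposition $Z\usim\bigoplus_i E^i$ gives $X=\sum_i(\tilde V^i)^*E^i\tilde V^i$, but to invoke matrix or absolute extremeness this must be a proper (resp.\ adequately structured) combination, and a priori the compressions of the $E^i$ onto the possibly proper subspaces $\operatorname{ran}\tilde V^i$ need not lie in $K$, since full freeness supplies only \emph{reducing} compressions. Overcoming this requires the rigidity coming from $X$ being irreducible: that $X$ occurring simultaneously as a compression and --- after applying the hypothesis --- as a subrepresentation of $Z$ forces the compressing isometry to have reducing range; equivalently, a unital completely positive map $T\mapsto\tilde V^*(T\otimes I)\tilde V$ that fixes each $X_j$ must, by irreducibility of $X$, be a $*$-automorphism of $M_n$ fixing the $X_j$, hence the identity. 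Establishing this rigidity --- and thereby genuinely separating the Arveson boundary from the weaker $n$-block diagonalizable condition --- is where the substance of the proof lies.
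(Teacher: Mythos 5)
Your overall architecture---the dictionary between matrix convex combinations and dilations, irreducibility of extreme points via block projections, and the intertwiner/isotypic argument showing that $V_\ell X=Y^\ell V_\ell$ with $X$ irreducible forces $Y^\ell\usim X\oplus Z^\ell$---matches the paper (Lemmas \ref{lem:irreduc} and \ref{lem:irreduciblegives}), and the ``dilation condition $\Rightarrow$ extreme'' halves of (2) and (3) go through as you describe (you have swapped the labels of the two directions, but the content is clear). The gaps are all in the opposite directions. The largest is the rigidity you defer at the end: that $X=\sum_\ell C_\ell^*XC_\ell$ with $\sum_\ell C_\ell^*C_\ell=I$ and $X$ irreducible forces each $C_\ell$ to be scalar. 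This is Proposition \ref{lem:hasagap}, and it is not a routine consequence of irreducibility: the paper proves it by invoking Arveson's Boundary Theorem together with uniqueness of Choi--Kraus representations (with an alternative proof via the Gleichstellensatz for minimal LMI pencils). Without it, ``matrix extreme $\Rightarrow$ \barvb'' is not established; and even granting it, you still need the closing computation $\al_k=\sum_\ell U_{\ell,1}^*X_kU_{\ell,2}=X_k\sum_\ell U_{\ell,1}^*U_{\ell,2}=0$ from orthogonality of the columns of the diagonalizing unitary.

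Two further steps would fail as written. For (3), ``absolute extreme $\Rightarrow$ Arveson boundary'': the definition applied to $X=V^*ZV$ yields only $Z\usim X\oplus W$, and your UCP rigidity cannot bridge the remaining gap to $\al=0$, because the relevant map compresses $M_{n+m}$ to $M_n$ and is not a self-map fixing the $X_j$; the fixed-point argument has nothing to act on. The paper instead first reduces to column dilations ($m=1$, Lemma \ref{lem:scalar enough for arv}) and then applies the characteristic-polynomial computation of Lemma \ref{lem:interlace}, which shows that a self-adjoint $\left(\begin{smallmatrix} D & a\\ a^* & e\end{smallmatrix}\right)$ unitarily equivalent to $D\oplus f$ must have $a=0$. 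That lemma is the missing ingredient and nothing in your outline replaces it. For (1), compression along $V_\phi$ is not available for a fully free set, which is closed only under \emph{reducing} compressions, not arbitrary isometric conjugation; and the unresolved cancellation of the $\be$-direction you flag is exactly where that route breaks down. The paper's argument instead conjugates $Z$ by the rotation \eqref{eq:U} to block-diagonalize it and then restricts to the resulting reducing subspaces to obtain $X\pm\al\in K(n)$.
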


The definition here of an Arveson boundary point for a free set mirrors the  geometric formulation of a (not necessarily irreducible) boundary representation \cite{Arv69} used by a number of authors including \cite{Aglermodel,MS98,DM05,Arv08}. Item \eqref{it:wearekleski} explicitly connects, in the setting of $\smatg$, Kleski's notion of boundary point for a matrix convex set with that of an Arveson boundary point.

Theorem \ref{thm:intromainext} makes clear the implications, Arveson boundary implies matrix extreme
implies Euclidean extreme.
 Item \eqref{it:Euclidean-geometric} falls out of the usual argument that if $K$ is matrix convex, then each $K(n)$ is convex. It is stated and proved as Proposition \ref{prop:Euclidean-geometric}. Items \eqref{it:0arv} and \eqref{it:wearekleski} are stated and proved as  Theorems \ref{thm:0arv} and \ref{thm:wearekleski} respectively.

\subsection{Free spectrahedra and polar duals}
\label{sec:FreeSpecDefs}
A simple class of matrix convex sets is the
solution sets of linear matrix inequalities (LMIs).
Given a $g$-tuple $A\in\smatg$,  let
$\La_A$  denote the \df{homogeneous linear pencil}
\[
\La_A(x)=A_1 x_1+\cdots+ A_g x_g,
\]
and
$L_A$ the \df{monic linear pencil}
\[
L_A(x)=I - A_1 x_1-\cdots- A_g x_g.
\]
The corresponding \df{free spectrahedron}
$\cD_A= \big(\cD_A(n)\big)_n$ is the sequence of sets
\[
\cD_A(n)=\{ X \in \smatng \mid L_A(X)\succeq0\}.
\]

 It is routine to verify that a free spectrahedron is  matrix convex.  Thus free spectrahedra are matrix convex subsets of $\smatg$ satisfying a finiteness condition.  Conversely,  as a special case of the Effros-Winkler matricial Hahn-Banach Theorem \cite{EW97},  if $K\subset \smatg$ is compact matrix convex and $0\in K(1)$, then $K$ is the (possibly infinite) intersection of free spectrahedra.\looseness=-1

If $\Omega\in \smatdg$ we say $\Omega$ has \df{size} $d$.  A tuple $B$ is a defining tuple for $\cD_A$ if $\cD_B=\cD_A$ and $\Omega$ is a  \df{minimal defining tuple} if $\Omega$ has minimal size among all defining tuples. The Gleichstellensatz
\cite[Corollary 3.18 or Theorem 3.12]{HKM13} (see also \cite{Zal+}) says any two minimal defining tuples for  $\cD_A$ are unitarily equivalent.

By analogy with the classical  notion, the \df{free polar dual} \index{polar dual}
$K^\circ =(K^\circ(n))_n$ of a free set $K\subset\smatg$ is
\[
 K^\circ(n):=  \Big\{ A \in \smatng : \
 L_A(X)=  I \otimes I - \sum_j^g A_j \otimes X_j \succeq 0
\text{ for all } X  \in K \Big\}.
\]
Note that the (free) polar dual of a matrix convex set is closed and  matrix convex.
 We refer the reader  to \cite{EW97,HKMjems} for basic properties of polar duals.

We do not know conditions on a closed matrix convex set  $K$
equivalent to the condition that $K$ is the closed matrix convex hull of its Arveson boundary points.
However, with a finiteness hypothesis Theorem \ref{thm:weak-kleski}, inspired by   \cite[Corollary 2.4]{Kls+},   gives an answer.

\begin{theorem}
\label{thm:weak-kleski}
Suppose $K$ is a closed matrix convex set containing $0$.
  If $K^\circ = \cD_{\Omega}$
  and $\Omega$ is a minimal defining tuple for $\cD_{\Omega}$, then there exists
 an $N$ and irreducible tuples $\Omega^1,\dots,\Omega^N$ in the Arveson boundary of $K$  such that
$
 \Omega=\oplus \Omega^j
$
 and
\beq\label{eq:omegaSpans}
 K=\mco(\{\Omega\})=\mco(\{\Omega^1,\dots,\Omega^N\}).
 \eeq
 (Here $\mco (\Gamma)$ denotes the matrix convex hull of $\Gamma\subset\smatg$, i.e., the smallest matrix convex set containing $\Gamma$.)
 Conversely, if there exists a tuple $\Omega$ such that \eqref{eq:omegaSpans} holds, then $K^\circ = \cD_{\Omega}$.
\end{theorem}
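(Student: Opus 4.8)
The plan is to exploit the duality $K = (K^\circ)^\circ = \cD_\Omega^\circ$ together with Theorem \ref{thm:intromainext}\eqref{it:wearekleski} and its underlying dilation machinery. First I would recall the bipolar theorem for matrix convex sets (Effros--Winkler \cite{EW97}): since $K$ is closed, matrix convex and contains $0$, one has $K = (K^\circ)^\circ$. Under the hypothesis $K^\circ = \cD_\Omega$, this gives the identity $K = \cD_\Omega^\circ$. Now write $\Omega = \bigoplus_{j=1}^N \Omega^j$ as a direct sum of irreducibles; this is possible for any tuple of self-adjoint matrices, and the Gleichstellensatz quoted in the excerpt guarantees this decomposition is essentially canonical since $\Omega$ is a minimal defining tuple (in particular no summand $\Omega^j$ is redundant, i.e. $\cD_{\Omega^j} \supsetneq \cD_\Omega$ for each $j$). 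The main work is to show each $\Omega^j$ lies in the Arveson boundary of $K = \cD_\Omega^\circ$, and that $K$ is the matrix convex hull of $\{\Omega^1,\dots,\Omega^N\}$ rather than merely their closed matrix convex hull.

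For the Arveson boundary claim, I would argue directly from the defining property: suppose $Z = \begin{pmatrix} \Omega^j & \al \\ \al^* & \be \end{pmatrix} \in \cD_\Omega^\circ$; I must show $\al = 0$. Membership of $Z$ in $\cD_\Omega^\circ$ means $L_Z(X) \succeq 0$ for all $X \in \cD_\Omega$, i.e. $I \otimes I - \sum_k Z_k \otimes X_k \succeq 0$. Compressing to the $\Omega^j$ corner shows $\Omega^j \in \cD_\Omega^\circ = K$ as expected; the extra content is that the off-diagonal block $\al$ must vanish. Here is where minimality of $\Omega$ enters: because $\Omega^j$ is irreducible and $\cD_{\Omega^j}$ strictly contains $\cD_\Omega$, there is ``room'' in $\cD_\Omega$ witnessing that $\Omega^j$ cannot be dilated within $K$. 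Concretely, I expect to invoke the abstract dilation characterization behind Theorem \ref{thm:0arv}/\ref{thm:wearekleski}: a point of a free spectrahedron's polar dual that is an irreducible minimal-defining-tuple summand is an absolute extreme point, hence by Theorem \ref{thm:intromainext}\eqref{it:wearekleski} lies in the Arveson boundary. The technical heart is translating ``$\Omega^j$ is a summand of a minimal defining tuple for $\cD_\Omega$'' into ``$\Omega^j$ admits no nontrivial dilation staying in $\cD_\Omega^\circ$''; this should follow by a Hahn--Banach separation argument in $\smatg$ applied to the hypothetical dilation, using that any proper enlargement of the free spectrahedron $\cD_\Omega$ would force a strictly smaller polar dual, contradicting $Z$'s membership.

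Next, the spanning statement $K = \mco(\{\Omega\}) = \mco(\{\Omega^1,\dots,\Omega^N\})$: the inclusion $\supseteq$ is immediate since each $\Omega^j \in K$ and $K$ is matrix convex. For $\subseteq$, note $\mco(\{\Omega^1,\dots,\Omega^N\}) = \mco(\{\Omega\})$ because $\Omega = \bigoplus \Omega^j$ is itself a direct sum (hence in the matrix convex hull of the summands, and conversely each $\Omega^j$ is a compression of $\Omega$). So it suffices to show $\cD_\Omega^\circ \subseteq \mco(\{\Omega\})$. Taking polar duals reverses inclusions, so this is equivalent to $\mco(\{\Omega\})^\circ \subseteq \cD_\Omega$; but $\mco(\{\Omega\})^\circ = \{\Omega\}^\circ$ by definition of the polar dual (the dual only sees the generating set), and $\{\Omega\}^\circ = \{A : L_A(\Omega) \succeq 0\} = \cD_\Omega$ by symmetry of the pencil $L_A(\Omega) = L_\Omega(A)$. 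Hence $\mco(\{\Omega\})^\circ = \cD_\Omega = K^\circ$, and since both $\mco(\{\Omega\})$ (its closure) and $K$ are closed matrix convex sets containing $0$ with equal polar duals, the bipolar theorem gives $\overline{\mco(\{\Omega\})} = K$. The remaining subtlety is dropping the closure: one must check $\mco(\{\Omega\})$ is already closed, which follows because the matrix convex hull of a single bounded tuple $\Omega$ at level $n$ is the image of a compact set (tuples of contractions $V$ into finitely many copies of $\Omega$, after a Carathéodory-type bound on the number of summands) under a continuous map, hence compact. I would cite or reprove this Carathéodory bound; it is the one genuinely fiddly point.

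For the converse, assume \eqref{eq:omegaSpans} holds, i.e. $K = \mco(\{\Omega^1,\dots,\Omega^N\})$ (equivalently $\mco(\{\Omega\})$). Then $K^\circ = \mco(\{\Omega\})^\circ = \{\Omega\}^\circ = \cD_\Omega$ by exactly the computation in the previous paragraph: the polar dual of a matrix convex hull equals the polar dual of the generating set, and the polar dual of $\{\Omega\}$ is the free spectrahedron $\cD_\Omega$ via the identity $L_A(\Omega) = L_\Omega(A)$. This direction is essentially a one-line duality computation once the ``polar dual of a hull = polar dual of generators'' lemma is in hand, which is standard and referenced in the excerpt via \cite{EW97,HKMjems}.

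I expect the main obstacle to be the Arveson boundary claim in the forward direction — specifically, rigorously deriving ``$\Omega^j$ has no nontrivial dilation inside $\cD_\Omega^\circ$'' from ``$\Omega^j$ is an irreducible summand of the minimal defining tuple $\Omega$.'' Everything else is bipolar-theorem bookkeeping plus a Carathéodory compactness estimate; but this step is where the minimality hypothesis on $\Omega$ does real work, and it is presumably the reason the theorem requires a \emph{minimal} defining tuple rather than an arbitrary one.
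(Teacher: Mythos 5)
Your converse direction and the duality bookkeeping are correct and essentially identical to the paper's: $\mco(\{\Omega\})^\circ=\cD_\Omega$ via the shuffle $L_A(\Omega)\usim L_\Omega(A)$, the bipolar theorem gives $K=\mco(\{\Omega\})$, and $\mco(\{\Omega\})=\mco(\{\Omega^1,\dots,\Omega^N\})$ since direct sums and compressions stay in the hull. Your worry about closedness of $\mco(\{\Omega\})$ is legitimate (the paper itself defers it to Remark \ref{rem:stoopid}). But the forward direction has a genuine gap, and it is exactly the step you flag: you never actually prove that the $\Omega^j$ lie in the Arveson boundary. Your proposed mechanism --- a Hahn--Banach separation argument showing ``any proper enlargement of $\cD_\Omega$ would force a strictly smaller polar dual'' --- is not developed and does not match what is needed. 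The question is not whether some spectrahedron got enlarged; it is whether a \emph{specific} dilation $Y=\begin{pmatrix}\Omega&\al\\ \al^*&\be\end{pmatrix}\in K$ must have $\al=0$, and separation alone does not produce that.

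The argument that works (and is the paper's) has three concrete ingredients, none of which appear in your sketch. First, since $K=\mco(\{\Omega\})$, the dilation $Y$ itself is of the form $V^*(I_m\otimes\Omega)V$, which gives $\cD_Y\supseteq\cD_\Omega$; compressing $Y$ back to $\Omega$ gives $\cD_Y\subseteq\cD_\Omega$; hence $\cD_Y=\cD_\Omega$. Second, minimality of $\Omega$ plus the Gleichstellensatz upgrades this equality of spectrahedra to a unitary equivalence $Y\usim\Omega\oplus\ga$. Third --- and this is a step you would still need even after obtaining the unitary equivalence --- Lemma \ref{lem:interlace} (the non-interlacing property: if $\begin{pmatrix}D&a\\ a^*&e\end{pmatrix}$ is unitarily equivalent to $D\oplus f$ with $D$ self-adjoint, then $a=0$) forces $\al=0$ entrywise. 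Note also that working summand-by-summand with $\Omega^j$, as you propose, makes step one unavailable: one only gets $\cD_\Omega\subseteq\cD_Z\subseteq\cD_{\Omega^j}$, which does not pin down $\cD_Z$. The clean route is to prove the boundary property for the whole tuple $\Omega$ and then pass to the irreducible summands (if some $\Omega^j$ had a nontrivial dilation in $K$, direct-summing with the remaining $\Omega^i\in K$ would give a nontrivial dilation of $\Omega$).
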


 Theorem \ref{thm:weak-kleski} is proved in Section \ref{sec:polArv}.

 The theory of extreme points can be used to obtain properties
of several basic spectrahedra.
For example, in Section \ref{sec:FreeSimpAndDuals} we use it to deduce the following.

\begin{cor}
 \label{cor:hardtobepolar-intro}
 Let $A\in \smatdg$. If {\CR $\cD_A (2)= \overline{\cD_A (2)}$ and} the polar dual of $\cD_A$ is again a free spectrahedron, then $\cD_A(1)$ is
 a polyhedron. In particular, if $\cD_A(1)$ is a ball, then $\cD_A^\circ$
 is not a free spectrahedron.\looseness=-1
\end{cor}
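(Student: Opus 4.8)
The plan is to combine Theorem~\ref{thm:weak-kleski} with the extreme point dictionary of Theorem~\ref{thm:intromainext} and the classical geometry of spectrahedra. Suppose $\cD_A^\circ$ is a free spectrahedron and choose a minimal defining tuple $\Omega$ for it, so $\cD_A^\circ=\cD_\Omega$. Since $\cD_A$ is closed, matrix convex and contains $0$ (as $L_A(0)=I\succeq0$), Theorem~\ref{thm:weak-kleski} applied to $K=\cD_A$ produces irreducible tuples $\Omega^1,\dots,\Omega^N$ in the Arveson boundary of $\cD_A$, of sizes $d_1,\dots,d_N$, with $\Omega=\bigoplus_j\Omega^j$ and $\cD_A=\mco(\{\Omega^1,\dots,\Omega^N\})$. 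A level-one point of this matrix convex hull is exactly a convex combination of points of the joint numerical ranges $W(\Omega^j):=\{\xi^*\Omega^j\xi:\xi\in\C^{d_j},\ \|\xi\|=1\}$, so $\cD_A(1)=\conv\big(\bigcup_{j=1}^N W(\Omega^j)\big)$; being the convex hull of finitely many compact sets, $\cD_A(1)$ is compact. Since a compact convex subset of $\RR^g$ with finitely many extreme points is a polytope, it suffices to show $\cD_A(1)$ has at most $N$ Euclidean extreme points.

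Let $x$ be an extreme point of $\cD_A(1)$. As the convex hull of the closed set $\bigcup_j W(\Omega^j)$, the set $\cD_A(1)$ has all its extreme points in that union, so $x=v^*\Omega^{j_0}v$ for some $j_0$ and some unit $v\in\C^{d_{j_0}}$. On the other hand $\cD_A(1)$ is a spectrahedron, so by \cite{RG95} $x$ is \emph{exposed}: there is $0\neq\ell\in\RR^g$ with $\langle\ell,z\rangle\le\langle\ell,x\rangle=:\mu$ for all $z\in\cD_A(1)$, with equality only at $z=x$. Because $W(\Omega^{j_0})\subseteq\cD_A(1)$, this forces $\La_{\Omega^{j_0}}(\ell):=\sum_k\ell_k\Omega^{j_0}_k\preceq\mu I$ with $\mu$ the top eigenvalue, it forces $v$ to lie in the top eigenspace $E:=\ker\!\big(\mu I-\La_{\Omega^{j_0}}(\ell)\big)$, and it forces $u^*\Omega^{j_0}u=x$ for every unit $u\in E$; polarizing, the compression of $\Omega^{j_0}$ to $E$ is the scalar tuple $x\,I_E$.

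The key dilation-theoretic point is now: the compression of the Arveson boundary point $\Omega^{j_0}$ to the \emph{top} eigenspace $E$ of $\La_{\Omega^{j_0}}(\ell)$ is again in the Arveson boundary of $\cD_A$. I expect this to fall out of the analysis of the Arveson boundary in the sections leading up to Theorem~\ref{thm:intromainext}: one must check that the relevant kernel of the defining pencil of $\cD_A$ is not destroyed by passing to $E$, and this is exactly where the maximality built into the top eigenspace (equivalently, the fact that $\ell$ attains its maximum over $\cD_A(1)$ at $x$) is used. Granting this, $x\,I_E$ lies in the Arveson boundary, and restricting further to the reducing subspace $\C v\subseteq E$ shows $x$ itself is an Arveson boundary point of $\cD_A$; being a scalar tuple it is irreducible, so by Theorem~\ref{thm:intromainext}\eqref{it:wearekleski} it is an absolute extreme point of $\cD_A$. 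Writing $x$ as a weakly proper matrix convex combination of $\Omega^1,\dots,\Omega^N$ (possible since $\cD_A=\mco(\{\Omega^j\})$) and invoking the definition of absolute extreme point — the irreducibility of each $\Omega^j$ rules out $\Omega^j\usim x\oplus Z$ with $Z$ nonempty — forces $x\usim\Omega^j$ for some $j$; since $x$ is $1\times1$ this gives $d_j=1$ and $x=\Omega^j$. Hence the extreme points of $\cD_A(1)$ lie in $\{\Omega^j:d_j=1\}$, so $\cD_A(1)$ is a polytope. Finally, a round ball has infinitely many extreme points and so is not a polytope, whence $\cD_A^\circ$ cannot be a free spectrahedron.

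The serious obstacle is the dilation-theoretic claim isolated in the third paragraph. It is worth emphasizing that the hypothesis enters there in an essential way: without the presentation $\cD_A=\mco$ of finitely many Arveson boundary points, a Euclidean extreme point of $\cD_A(1)$ need not be an Arveson boundary point (this already fails for suitable unbounded complex free spectrahedra), so one cannot simply quote ``Euclidean extreme $\Rightarrow$ Arveson''. The remaining steps — the reduction via Theorem~\ref{thm:weak-kleski}, exposedness of extreme points of spectrahedra, and the bookkeeping with absolute extreme points — are routine.
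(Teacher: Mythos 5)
Your skeleton is the paper's: apply Theorem \ref{thm:weak-kleski} to write $\cD_A=\mco(\{\Omega^1,\dots,\Omega^N\})$ with each $\Omega^j$ an irreducible Arveson boundary point, then argue that $\cD_A(1)$ is compact convex with only finitely many Euclidean extreme points. But your route from ``$x$ is an extreme point of $\cD_A(1)$'' to ``$x$ is an Arveson boundary point of $\cD_A$'' hinges entirely on the claim you isolate in your third paragraph --- that the compression of $\Omega^{j_0}$ to the top eigenspace $E$ of $\La_{\Omega^{j_0}}(\ell)$ is again in the Arveson boundary of $\cD_A$ --- and you never prove it: you say you ``expect'' it and proceed ``granting this.'' That is a genuine gap. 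Nothing in the exposedness of $x$ or the maximality of $E$ is shown to rule out a dilation of the compressed tuple inside $\cD_A$ with nonzero off-diagonal, and everything downstream (restriction to $\C v$, the absolute-extreme-point bookkeeping) rests on this unestablished step.

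The gap is also unnecessary, and your stated reason for avoiding the direct route is mistaken in this setting. For a free spectrahedron (as opposed to a general matrix convex set) the implication ``Euclidean extreme point of $\cD_A(1)$ $\Rightarrow$ Arveson boundary point of $\cD_A$'' does hold; this is Proposition \ref{prop:polyhedron} of the paper, and it is precisely what makes the paper's proof short. The argument: if $x\in\cD_A(1)$ is Euclidean extreme, the Ramana--Goldman characterization (Theorem \ref{thm:RG}, Corollary \ref{cor:RG+}) says $\ker L_A(x)\subset\ker\La_A(Y)$ forces $Y=0$; if now $\bigl(\begin{smallmatrix}x&a\\a^*&b\end{smallmatrix}\bigr)\in\cD_A(2)$, positive semidefiniteness of $L_A$ of this dilation forces the range of $\La_A(a)$ into the range of $L_A(x)$, i.e.\ $\ker\La_A(a)\supset\ker L_A(x)$, whence $a=0$, and Lemma \ref{lem:scalar enough for arv} upgrades this to $x\in\arv\cD_A$. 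Since scalar points are irreducible, every extreme point of $\cD_A(1)$ is then one of the finitely many $\Omega^j$ (up to unitary equivalence), and your remaining steps --- compactness of $\cD_A(1)$ via Theorem \ref{thm:weak-kleski}, and the contrapositive for the ball --- are fine. Replace the unproved compression claim by Proposition \ref{prop:polyhedron} (or its two-line proof above) and the argument closes.
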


Corollary \ref{cor:hardtobepolar-intro} implies  finitely generated matrix convex sets are rarely free spectrahedra.  However, they are
\df{free spectrahedrops}, i.e., projections of free spectrahedra \cite{HKMjems}.  On the other hand, if $K$ is a compact matrix convex free semialgebraic set containing $0$ in its interior, then $K$ is a free spectrahedron \cite{HM12}.

\subsection{Reader's Guide}
The rest of the paper is organized as follows.
Section \ref{sec:classyextreme}
recalls the classical notion of an extreme point in $\R^g$
and characterizes those for
matrix convex sets (Proposition \ref{prop:Euclidean-geometric})
and free spectrahedra (Corollary \ref{cor:RG+}).
Section \ref{ssec:arv} gives our first main result,
Theorem \ref{thm:wearekleski} showing that an absolute
extreme point is exactly an irreducible Arveson boundary point.
In Section \ref{sec:block} we prove our second main result, a dilation
theoretic characterization of matrix extreme points via
the \barvb, see Theorem \ref{thm:0arv}.
Section \ref{sec:polArv} gives the proof of our final main result:
the polar dual of a matrix convex set is spanned by its finitely
many Arveson boundary points if and only if it is a free
spectrahedron (Theorem \ref{thm:weak-kleski}).
We show the polar dual of a free spectrahedron is seldom
a free spectrahedron in Corollary \ref{cor:hardtobepolar}, and show it is one for a free
simplex (Corollary \ref{cor:pdSimplex}).
The paper concludes with Section \ref{sec:exA} providing
further applications and examples of matrix extreme points.
The sets of extreme points for two types of
matrix convex sets above the unit
disk in $\R^2$
are studied in Subsection \ref{ssec:ball}, where
the Arveson boundary of the spin disk is identified and shown
to span.  Finally, in Subsection \ref{ssec:TV} the matrix
convex hull of the TV screen is investigated.

\section{Euclidean (Classical) Extreme Points}
 \label{sec:classyextreme}
In this section we establish Theorem \ref{thm:intromainext} \eqref{it:Euclidean-geometric} characterizing  Euclidean
extreme points of fully free sets. This characterization reduces to a result of  Ramana-Goldman
\cite{RG95}  in the case $K$ is a free spectrahedron,
see Corollary \ref{cor:RG+}.

 Recall,  a point  $v$  of a convex set $C\subset \R^g$ is an \df{(Euclidean) extreme point} of $C$,
  in the classical sense,
if $ v= \la a + (1-\la) b$ for $a,b \in C$ and $0 < \la < 1$
implies $a=v=b$.
Let  $\euc C$ denote its set of Euclidean extreme points.
Note that this definition makes sense even if $C$ is not assumed convex.

  While the interest here is in
matrix convex sets $K$ such as
  free spectrahedra, it is
 of course natural to consider, for fixed $n$, the
 extreme points of the
convex set $K(n)\subset\smatng$.
The next result is a dilation style characterization of
Euclidean extreme points of $K(n)$.
\begin{prop}
 \label{prop:Euclidean-geometric}
 Suppose $\Ga$ is a fully free set, $n$ is a positive integer and $X\in\Ga(n).$  If $X$ is a Euclidean extreme point of $\Ga(n)$,
 $\al\in \smatng$ and
\[
 \begin{pmatrix} X & \alpha \\ \alpha & X \end{pmatrix} \in   \Ga(2n) ,
\]
then $\alpha =0$.  
Conversely, if $X$ is not an extreme point, then there exists $0\neq\alpha\in \smatng$ with
\begin{equation}
\label{eq:Wa}
 W=\begin{pmatrix} X & \alpha \\ \alpha & X \end{pmatrix}\in \Ga(2n) .
\end{equation}

Finally, if $\Ga$ is matrix convex, $X$ is a Euclidean extreme point 
and for some  $\al\in\matng$ we have
\begin{equation}
 \label{eq:Xab}
 Z=\begin{pmatrix} X & \al^*\\ \al & X\end{pmatrix} \in \Ga(2n),
\end{equation}
 it follows that $\al=0$.
\end{prop}

\begin{proof}
 Suppose $X\in\Gamma(n)$ is not a Euclidean extreme point for $\Gamma(n)$. Thus, there exists a $0\ne \al\in \smatng$ such that $X\pm\al\in \Gamma(n)$. Since $\Gamma$ is closed with respect to direct sums, $Y=(X+\al)\oplus (X-\al)\in \Gamma(2n)$.
Let $U$ denote the $2n\times 2n$ unitary matrix
\begin{equation}
\label{eq:U}
 U=\frac{1}{\sqrt{2}} \begin{pmatrix} I_n & -I_n\\ I_n & I_n \end{pmatrix}
\end{equation}
 Since $\Gamma$ is closed with respect to unitary similarity,
\[
 U^* Y U = \begin{pmatrix} X&\al\\ \al & X\end{pmatrix} \in \Gamma(2n).
\]

To prove the converse suppose $X\in \Ga(n)$ is a Euclidean extreme point, $\al\in \smatng$ and 
$W$ is as in equation \eqref{eq:Wa}. With $U$ as in equation \eqref{eq:U}, we have
\[
 UWU^* = \begin{pmatrix} X+\al & 0 \\ 0 & X-\al \end{pmatrix} \in \Ga(2n).
\]
 Since $\Ga$ is fully free, both $X\pm\al\in\Ga(n)$.  Thus, $X= \frac12 [(X+\al)+(X-\al)]$. Since $X$ is an extreme point, $X+\al=X$; i.e., $\al=0$.

Finally, suppose $\Ga$ is matrix convex and  $Z$  is as in equation \eqref{eq:Xab}. Let
\[
 F=\begin{pmatrix} 0 & I\\ I & 0 \end{pmatrix}, \  \ G=\begin{pmatrix} iI & 0\\ 0 & I \end{pmatrix}.
\]
Since 
\[
\frac{1}{2}(F^*ZF + Z)  = \begin{pmatrix} X & \gamma \\ \gamma & X\end{pmatrix},
\]
where $\gamma =\frac 12 (\al+\al^*)$, it follows from what has already been proved that $\al+\al^*=0$.
On the other hand,
\[
 \frac 12 (G^* Z G + GF^*ZFG^*) = \begin{pmatrix} X & \delta \\ \delta & X\end{pmatrix}\in \Ga(2n),
\]
where $\delta =\frac 12 (-i\al +i\al^*) = \frac{i}{2}(\al^*-\al)$. Thus, again by what has already been proved,
 $\al-\al^*=0$. Hence $\al=0$.
\end{proof}

We next consider, for fixed $n$, the
 extreme points of the spectrahedron $\cD_{A}(n)\subset \smatng$ and
  this was done by Ramana and Goldman \cite{RG95} for $n=1$.  Below
 is a sample result.

\begin{theorem}[Ramana and Goldman \protect{\cite[Corollary 3]{RG95}}]
 \label{thm:RG}
   Fix $A\in \smatdg$. A point $X\in\cD_{A}(1)$ is an Euclidean extreme point of $\cD_A(1)$
if and only if
   $Y\in\R^g$ and $\ker L_A(X) \subset \ker\La_A(Y)$ implies
   $Y=0$.
\end{theorem}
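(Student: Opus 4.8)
The plan is to funnel both the condition ``$X$ is a Euclidean extreme point of $\cD_A(1)$'' and the stated kernel condition into one common intermediate statement about positive semidefiniteness of perturbed linear pencils.

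\textbf{Step 1: reduce extremality to a symmetric perturbation.} Since $\cD_A(1)=\{x\in\R^g: L_A(x)\succeq0\}$ is convex, a point $X\in\cD_A(1)$ fails to be a Euclidean extreme point precisely when there is a nonzero $Y\in\R^g$ with $X+Y\in\cD_A(1)$ and $X-Y\in\cD_A(1)$: from a nontrivial convex representation $X=\la a+(1-\la)b$ (so $a\ne b$, $0<\la<1$) one obtains such a $Y$ as a small positive multiple of $b-a$, and conversely $X=\tfrac12(X+Y)+\tfrac12(X-Y)$ is a nontrivial representation when $Y\ne0$. Using $L_A(X\pm Y)=L_A(X)\mp\La_A(Y)$, this reads: $X$ is not a Euclidean extreme point of $\cD_A(1)$ iff there exists $Y\ne0$ with $L_A(X)-\La_A(Y)\succeq0$ and $L_A(X)+\La_A(Y)\succeq0$.

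\textbf{Step 2: the linear-algebra core.} The key lemma I would prove is: if $P$ is positive semidefinite and $Q$ is self-adjoint of the same size, then $P+Q\succeq0$ and $P-Q\succeq0$ imply $\ker P\subseteq\ker Q$; and conversely, if $\ker P\subseteq\ker Q$ then $P+tQ\succeq0$ and $P-tQ\succeq0$ for all sufficiently small $t>0$. For the first implication, evaluating both inequalities against $v\in\ker P$ forces $\langle Qv,v\rangle=0$; since $P+Q\succeq0$ and a positive semidefinite matrix annihilates every vector on which its quadratic form vanishes, $(P+Q)v=0$, hence $Qv=0$. For the converse, work in the decomposition $\ker P\oplus(\ker P)^\perp$: as $Q$ is self-adjoint and kills $\ker P$, its range sits in $(\ker P)^\perp$, so $P$ and $Q$ are both block diagonal with zero $\ker P$-block, while the compression of $P$ to $(\ker P)^\perp$ is strictly positive definite; hence $P\pm tQ\succeq0$ once $t$ is small. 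Applying this with $P=L_A(X)$, $Q=\La_A(Y)$, and using $\La_A(tY)=t\La_A(Y)$ with $tY\ne0$ when $Y\ne0$ (the case $\La_A(Y)=0\ne Y$ being trivial, since then $X\pm Y\in\cD_A(1)$ outright), converts ``there exists $Y\ne0$ with $L_A(X)\pm\La_A(Y)\succeq0$'' into ``there exists $Y\ne0$ with $\ker L_A(X)\subseteq\ker\La_A(Y)$''.

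\textbf{Step 3: assemble.} Steps 1 and 2 together show that $X\in\cD_A(1)$ is not a Euclidean extreme point of $\cD_A(1)$ if and only if there is a nonzero $Y\in\R^g$ with $\ker L_A(X)\subseteq\ker\La_A(Y)$; taking contrapositives yields the theorem. Essentially all the content is the lemma of Step 2, which is routine; as an alternative route, the theorem follows from Proposition~\ref{prop:Euclidean-geometric} applied to the fully free set $\cD_A$ at level $n=1$, since the inequality $Z\in\cD_A(2)$ for $Z$ as in \eqref{eq:Xab} unwinds, via a Schur complement argument, to exactly $\ker L_A(X)\subseteq\ker\La_A(\alpha)$. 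The only place demanding a little care is the scaling estimate in the converse of Step 2; everything else is bookkeeping.
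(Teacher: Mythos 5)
Your proof is correct and follows essentially the same route as the paper's: both reduce extremality to the existence of a nonzero $Y$ with $L_A(X)\pm\La_A(Y)\succeq0$ and then pass back and forth to the kernel (equivalently, range) containment via the small-$t$ perturbation $L_A(X)\pm t\La_A(Y)$. Your Step 2 lemma is exactly the "observation" the paper states at the start of the proof of Corollary \ref{cor:RG+}, just spelled out in both directions and with the degenerate case $\La_A(Y)=0\ne Y$ handled explicitly.
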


The next corollary is an immediate consequence of Proposition 
\ref{prop:Euclidean-geometric}
and the proof of Theorem \ref{thm:RG}.

\begin{cor}
\label{cor:RG+}
 For a point $X\in\cD_A(n)$ the following are equivalent.
  \ben[\rm (i)]
   \item \label{it:euclid}
     $X$ is an Euclidean extreme point of $\cD_{A}(n)$;
   \item \label{it:rLE}
         $Y\in\smatng$ and $\ker L_A(X)\subset \ker \La_A(Y)$ implies
             $Y=0$;
   \item
    \label{it:kercond}
     $W\in\cD_A(n)$ and $\ker L_A(X)\subset \ker L_A(W)$ implies
           $\ker L_A(X)=\ker L_A(W)$;
   \item\label{it:rigArv} $\al,\beta\in\smatng$ and
     \[
      Z=\begin{pmatrix} X &\al \\ \al & \be \end{pmatrix} \in \cD_A(2n)
     \]
      implies $\al=0$.
  \een
\end{cor}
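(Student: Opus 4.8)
The plan is to prove the cycle of implications $\eqref{it:euclid}\Leftrightarrow\eqref{it:rigArv}$, $\eqref{it:euclid}\Leftrightarrow\eqref{it:rLE}$ and $\eqref{it:rLE}\Leftrightarrow\eqref{it:kercond}$, relying on Proposition \ref{prop:Euclidean-geometric} for one equivalence and on a direct kernel computation for the others. Since $\cD_A$ is a free spectrahedron, hence a fully free set, Proposition \ref{prop:Euclidean-geometric} applied with $\Ga=K=\cD_A$ gives immediately that \eqref{it:euclid} $\Leftrightarrow$ \eqref{it:rigArv}: a point $X\in\cD_A(n)$ is a Euclidean extreme point of $\cD_A(n)$ precisely when the only way to complete $X$ to a self-adjoint dilation $Z$ inside $\cD_A(2n)$ is with $\al=0$. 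So this part requires no new work.

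The core of the argument is \eqref{it:euclid} $\Leftrightarrow$ \eqref{it:rLE}, which is the $n$-level analog of Ramana--Goldman (Theorem \ref{thm:RG}). First I would record the linearity identity $L_A(X+tY)=L_A(X)-t\,\La_A(Y)$ for $t\in\R$ and $Y\in\smatng$. For the direction \eqref{it:rLE} $\Rightarrow$ \eqref{it:euclid}: suppose $X$ is not Euclidean extreme, so $X=\tfrac12(X_+ + X_-)$ with $X_\pm=X\pm Y\in\cD_A(n)$ for some $Y\neq 0$. Adding $L_A(X+Y)\succeq 0$ and $L_A(X-Y)\succeq 0$ gives $2L_A(X)\succeq 0$ and, more importantly, for any unit vector $v\in\ker L_A(X)$ one gets $\langle L_A(X+Y)v,v\rangle + \langle L_A(X-Y)v,v\rangle = 2\langle L_A(X)v,v\rangle = 0$, and since both summands are $\geq 0$ each is $0$; positivity then forces $L_A(X\pm Y)v=0$, i.e. $(\La_A(Y))v = \tfrac12\big(L_A(X-Y)-L_A(X+Y)\big)v=0$. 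Hence $\ker L_A(X)\subset\ker\La_A(Y)$ with $Y\neq 0$, contradicting \eqref{it:rLE}. Conversely, for \eqref{it:euclid} $\Rightarrow$ \eqref{it:rLE}: if $\ker L_A(X)\subset\ker\La_A(Y)$ for some $Y\neq 0$, decompose $\smatng = \ker L_A(X)\oplus\ker L_A(X)^\perp$; on the complement $L_A(X)$ is strictly positive, so for $\epsilon>0$ small enough $L_A(X)\pm\epsilon\La_A(Y)\succeq 0$ (the perturbation is dominated on the complement by a gap argument and vanishes on the kernel), i.e. $X\pm\epsilon Y\in\cD_A(n)$, contradicting extremality. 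This gap/compression argument is the one routine-but-careful step.

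Finally \eqref{it:rLE} $\Leftrightarrow$ \eqref{it:kercond}. Given $W\in\cD_A(n)$ with $\ker L_A(X)\subset\ker L_A(W)$, set $Y=W-X$; then $\La_A(Y)=L_A(X)-L_A(W)$, and on $\ker L_A(X)$ both $L_A(X)$ and $L_A(W)$ annihilate any such vector (the first by definition, the second by hypothesis once we check $\ker L_A(X)\subset\ker L_A(W)$ is used symmetrically — actually we only need one inclusion here), so $\ker L_A(X)\subset\ker\La_A(Y)$; under \eqref{it:rLE} this forces $Y=0$, hence $W=X$ and trivially $\ker L_A(X)=\ker L_A(W)$, giving \eqref{it:kercond}. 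For the reverse, I would note that $\ker L_A(X)\subset\ker\La_A(Y)$ implies, again by the gap argument, $W:=X+\epsilon Y\in\cD_A(n)$ for small $\epsilon$ with $\ker L_A(W)=\ker L_A(X)$ (the perturbation is zero on the kernel and stays strictly positive off it, so the kernel cannot shrink, and cannot grow since $\La_A(Y)$ vanishes there); then \eqref{it:kercond} is automatically satisfied, so to get a contradiction one instead observes that if $Y\neq0$ then $\epsilon Y\neq 0$ but $W\neq X$ while $\ker L_A(W)=\ker L_A(X)$ does not yet contradict \eqref{it:kercond} — so the cleanest route is actually $\eqref{it:kercond}\Rightarrow\eqref{it:euclid}$ directly: if $X=\tfrac12(X_++X_-)$ properly, the kernel computation above shows $\ker L_A(X)\subset\ker L_A(X_+)$, so by \eqref{it:kercond} $\ker L_A(X_+)=\ker L_A(X)$, and symmetrically for $X_-$; then $\La_A(Y)=L_A(X)-L_A(X_+)$ vanishes on $\ker L_A(X)$ and one more gap argument (perturbing $X$ by $\epsilon Y$ both directions) shows $Y$ can be scaled, forcing $Y=0$.

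I expect the main obstacle to be bookkeeping the kernel inclusions cleanly and making the ``$\epsilon$ small enough'' compression argument rigorous in the $n\times n$ operator setting (as opposed to the scalar Ramana--Goldman case): one must use that on $\ker L_A(X)^\perp$ the compression of $L_A(X)$ is bounded below by some $\delta>0$ while $\|\La_A(Y)\|$ is finite, so $\epsilon<\delta/\|\La_A(Y)\|$ works, together with the block lower-triangular structure of $L_A(X)\pm\epsilon\La_A(Y)$ with respect to the decomposition $\ker L_A(X)\oplus\ker L_A(X)^\perp$ — but since $\La_A(Y)$ need not be block-diagonal, one needs the standard Schur-complement/positivity fact that a self-adjoint matrix vanishing on a subspace $\cH$ and positive definite on $\cH^\perp$, perturbed by a small self-adjoint term vanishing on $\cH$, stays positive semidefinite. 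This is elementary but is the one place the scalar proof does not transcribe verbatim.
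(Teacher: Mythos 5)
Most of your proposal tracks the paper's proof: the equivalence \eqref{it:euclid}$\Leftrightarrow$\eqref{it:rigArv} is, as you say, just Proposition \ref{prop:Euclidean-geometric} specialized to $\Gamma=\cD_A$; your two directions of \eqref{it:euclid}$\Leftrightarrow$\eqref{it:rLE} and your \eqref{it:rLE}$\Rightarrow$\eqref{it:kercond} (via $Y=W-X$, $\La_A(Y)=L_A(X)-L_A(W)$) are exactly the paper's arguments. One small simplification you missed: since $\La_A(Y)$ is self-adjoint and its kernel contains $\cH=\ker L_A(X)$, its range lies in $\cH^\perp$, so it is automatically block \emph{diagonal} $0\oplus M$ with respect to $\cH\oplus\cH^\perp$ --- there is no off-diagonal term and no Schur-complement issue. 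The perturbation $L_A(X)\pm t\La_A(Y)=0\oplus(P\pm tM)$ with $P\succ0$ on $\cH^\perp$, and the ``$\epsilon$ small enough'' step you flag as the delicate one is immediate.

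The genuine gap is the remaining implication, that \eqref{it:kercond} implies \eqref{it:rLE} (or \eqref{it:euclid}). You correctly observe that perturbing by a \emph{small} $\epsilon Y$ keeps the kernel equal to $\ker L_A(X)$ and so produces no contradiction with \eqref{it:kercond}, but your proposed repair --- ``one more gap argument \dots shows $Y$ can be scaled, forcing $Y=0$'' --- is not an argument: the gap argument only produces more points of $\cD_A(n)$, and knowing $\ker L_A(X_\pm)=\ker L_A(X)$ does not force $Y=0$. The missing idea is the observation the paper opens its proof with: given $Y\neq0$ with $\ker L_A(X)\subset\ker\La_A(Y)$, do not stop at small $t$ but increase $|t|$ to the \emph{critical} value $t_0$ at which $P\pm t_0M$ first becomes singular (such a finite $t_0$ exists in at least one direction provided $M\neq0$, which holds when $\cD_A$ is bounded so that $\La_A$ is injective on $\smatng$). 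Then $W=X+t_0Y\in\cD_A(n)$ and $\ker L_A(W)$ \emph{strictly} contains $\ker L_A(X)$, which is precisely the violation of \eqref{it:kercond} you need. Without this step the cycle of implications does not close.
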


\begin{proof}
  The proof uses the following observation. If $L(X)\succeq 0$ and 
$Y\in\smatng\setminus\{0\}$ satisfies
  $\ker L(X) \subset \ker \La(Y)$,
  then there exists a $t\in\R$ such that $L(X\pm tY)\succeq 0$ and
  the kernel of $L(X+tY)$ strictly contains the kernel of $L(X)$.
  To prove this assertion, simply  note that $L(X\pm tY) = L(X)\pm t\La (Y)$ and
  the hypotheses imply that the range of the self-adjoint matrix $\La(Y)$ lies inside the range of
  the positive semidefinite matrix $L(X)$.

 Theorem \ref{thm:RG} gives the equivalence of items \eqref{it:euclid} and \eqref{it:rLE}
 in the case $n=1$, but their proof adapts easily to  general $n$. 
 A proof is included here for the readers convenience.
  If $X$ is an extreme point of $\cD_{A}$,
  and $\ker L_A(X) \subset \La_A(Y)$, then  choosing $t$ as
  above, gives $L_A(X\pm tY) \succeq 0$. Hence $X\pm tY \in \cD_{A}$
  and therefore $Y=0$.  Hence item \eqref{it:euclid} implies item \eqref{it:rLE}.
   Conversely,  if $X$ is not an extreme point of $\cD_{A}$,
 then there exists a non-zero $Y$ such that $X\pm Y \in \cD_{A}$.
  In particular, $0\preceq L_A(X+Y) = L_A(X)\pm \La_A(Y)$
  and hence $\ker \La(Y)\supset \ker L_A(X)$.  Hence item \eqref{it:euclid} and item \eqref{it:rLE}
  are equivalent.

  Suppose $Y\ne 0$ and  $\ker L_A(X)\subset  \La_A(Y)$.
  By the observation at the outset of the proof, there is a $t\ne 0$ such that
 \[
    0\preceq L_A(X \pm  tY)= L_A(X) \pm t \La_A(Y).
 \]
  and the kernel of $L_A(X+tY)$ strictly contains that of $L_A(X)$.
  Hence item \eqref{it:kercond} implies item \eqref{it:rLE}.
  On the other hand, if $\ker L_A(X)\subset \ker L_A(W)$, then with
 $Y= X-W$, the kernel of $L_A(X)$ is contained in the kernel of $\La_A(Y)$.
  In particular, if (ii) holds, then $Y=0$ and hence $W=X$.
   Thus $\ker L_A(X)= \ker L_A(W)$ and item \eqref{it:rLE} implies item \eqref{it:kercond}.

That item \eqref{it:rigArv} implies item \eqref{it:euclid} 
is seen by taking $\be=X$ and applying   Proposition \ref{prop:Euclidean-geometric}.
Now suppose item \eqref{it:rLE} holds and $Z$ is given as in item \eqref{it:rigArv}. By considering $L_A(Z)$, it follows
that $\ker(L_A(X))\subset \ker(\La_A(\alpha))$. Hence $\al=0$. 
\end{proof}



\section{Arveson Boundary and Absolute Extreme Points}\label{ssec:arv}

Now we turn to Arveson boundary points and
 absolute extreme points
({\it boundary points} in the terminology of \cite{Kls+})
of a free set as defined in the introduction.
We shall establish Theorem \ref{thm:intromainext} \eqref{it:wearekleski}, see  Theorem \ref{thm:wearekleski},
showing that  absolute extreme points are exactly irreducible Arveson boundary points.

\subsection{Matrix convex hulls}\label{sec:mco}
In this subsection we describe the matrix convex hull of an arbitrary subset $\Gamma\subset\smatg$.

The
\df{matrix convex hull} of an arbitrary
subset $\Gamma\subset\smatg$
is defined to be the
intersection
of all matrix convex sets
containing $\Gamma$. It is easily seen to be convex and is thus the smallest matrix convex set
that contains $\Gamma$.  Let $K=\mco(\Gamma)$ denote the matrix convex hull of $\Gamma$ and $\cmco(\Gamma)$ the \df{closed matrix convex hull} of $\Gamma$ obtained by taking the ordinary (in $\smatng$) closures of each $K(n)$. A routine exercise shows $\mco(\Gamma)$ is matrix convex and is thus the smallest closed matrix convex set containing $\Gamma$.   As an example, each $\Omega \in \smatg$ gives rise to the \df{finitely generated} matrix convex set,
\beq\label{eq:mco}
 \mco(\{\Omega\}) =\big\{V^* (I_m\otimes \Omega) V: m\in\N,\,V \text{ is an isometry}\big\}.
\eeq

\begin{prop}
 \label{prop:hull-abstract}
   A point $X\in\smatng$ is in the matrix convex hull of the free set  $\Gamma$ if and only if there is a
   positive integer $N$, a tuple $Z\in \Gamma(N)$ and an isometry $V:\C^n\to \C^N$ such that $X=V^* ZV.$
\end{prop}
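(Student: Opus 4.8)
The plan is to characterize the matrix convex hull of a free set $\Gamma$ by exhibiting the right-hand side collection as a matrix convex set. Define
\[
  \Lambda(n):=\big\{V^*ZV : N\in\N,\ Z\in\Gamma(N),\ V:\C^n\to\C^N \text{ an isometry}\big\},
\]
and set $\Lambda=(\Lambda(n))_n$. One inclusion is immediate: since any matrix convex set containing $\Gamma$ is closed under isometric conjugation (this is part of the definition of matrix convex), every element of $\Lambda$ lies in $\mco(\Gamma)$, so $\Lambda\subseteq\mco(\Gamma)$. For the reverse inclusion it suffices to prove that $\Lambda$ is itself a matrix convex set containing $\Gamma$; then minimality of $\mco(\Gamma)$ gives $\mco(\Gamma)\subseteq\Lambda$, and we are done. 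That $\Gamma\subseteq\Lambda$ is clear by taking $V=I$ (using that $\Gamma$, being a free set, is nonempty at the relevant levels, or simply checking the claim levelwise).

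The substance is showing $\Lambda$ is matrix convex, i.e.\ closed under matrix convex combinations. So suppose $X=\sum_{\ell=1}^M W_\ell^* X^\ell W_\ell$ with $\sum_\ell W_\ell^*W_\ell=I_n$ and each $X^\ell\in\Lambda(n_\ell)$. Write $X^\ell=V_\ell^*Z^\ell V_\ell$ with $Z^\ell\in\Gamma(N_\ell)$ and $V_\ell:\C^{n_\ell}\to\C^{N_\ell}$ an isometry. Form the direct sum $Z:=Z^1\oplus\cdots\oplus Z^M\in\Gamma(N_1+\cdots+N_M)$, which lies in $\Gamma$ since $\Gamma$ is closed under direct sums. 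Then set $V:=\bigoplus_\ell (V_\ell W_\ell):\C^n\to\C^{N_1+\cdots+N_M}$, i.e.\ the column-stacked map whose $\ell$-th block is $V_\ell W_\ell$. A direct computation gives $V^*ZV=\sum_\ell W_\ell^*V_\ell^*Z^\ell V_\ell W_\ell=\sum_\ell W_\ell^* X^\ell W_\ell=X$, and $V^*V=\sum_\ell W_\ell^*V_\ell^*V_\ell W_\ell=\sum_\ell W_\ell^*W_\ell=I_n$, so $V$ is an isometry. Hence $X\in\Lambda(n)$, proving $\Lambda$ is closed under matrix convex combinations. Since a graded set closed under matrix convex combinations is matrix convex, $\Lambda$ is matrix convex.

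One small bookkeeping point deserves care: the conjugating maps $W_\ell$ in a general matrix convex combination need not be isometries individually (only $\sum W_\ell^*W_\ell=I_n$), but the composition $V_\ell W_\ell$ need not be an isometry either — what matters is only that the \emph{direct sum} $V=\bigoplus_\ell(V_\ell W_\ell)$ satisfies $V^*V=I_n$, which the computation above confirms. Since each $V_\ell$ is an isometry, $V^*V=\sum_\ell W_\ell^*W_\ell=I_n$ regardless of the individual $W_\ell$, so $V$ is genuinely an isometry and the representation $X=V^*ZV$ is of the required form. I expect the only real obstacle to be this matching-up of the dilation data across the direct sum; everything else is formal. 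Note also that fully freeness is not needed here — closure under direct sums and unitary similarity (hence isometric conjugation for the ``$\subseteq$'' direction) is all that the argument uses.
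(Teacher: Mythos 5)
Your proposal is correct and follows essentially the same route as the paper: both define the set of all isometric compressions $V^*ZV$ of elements of $\Gamma$ and show it is the smallest matrix convex set containing $\Gamma$. The only cosmetic difference is that the paper checks closure under direct sums and under isometric conjugation separately (using the equivalent characterization of matrix convexity), whereas you verify closure under general matrix convex combinations in a single direct-sum computation; both are valid.
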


\begin{proof}
For positive integers $n$, let   $K(n)$ denote those $n\times n$ tuples $X$ for which there exits a positive integer $N$, an isometry $V:\C^n\to\C^N$ and a tuple $Z\in \Gamma(N)$ such that $X=V^*ZV$.  Since $\Gamma$ is closed with respect to direct sums, so is $K=(K(n))_n$.
 Suppose now $W:\C^\ell\to\C^n$ is an isometry. Let $Y$ denote the isometry $Y:\C^\ell\to \C^{N}$ given by $Y= VW$. It follows that
\[
Y^* Z Y = W^* XW.
\]
 Since $Z\in\Gamma(N)$, by definition $W^*XW\in K(\ell)$. Hence $K$ is closed with respect to isometric conjugation and is therefore a matrix convex set.
\end{proof}

\begin{remark}\rm
 \label{rem:stoopid}
 If $\Gamma\subset \smatg$ is a finite set, then Theorem \ref{thm:weak-kleski} implies $\mco(\Gamma)$ is closed. (It is not hard to give a direct proof of this fact.)
   At least as far as we are aware for a compact set $\Gamma\subset\smatng$, its matrix convex hull $\mco(\Gamma)$ is not necessarily
 closed. Thus $\cmco(\Gamma)$ is potentially larger than $\mco(\Gamma)$.
\end{remark} %

\subsection{Arveson boundary}
In this subsection we recall the notion of the Arveson boundary of a free set
and develop some of its basic properties.

 Given a free set $\Gamma$ and  a positive integer  $n$,  a  tuple $X\in \Gamma(n)$ is in the
 the \df{Arveson boundary of  $\Gamma$} or is an \df{Arveson boundary point} of $\Gamma$,
 written  $X\in\arv \Gamma$, \index{$\arv$}  if given a positive integer $m$ and $g$-tuples $\alpha$
 of size $n\times m$ and $\beta$ of size $m\times m$ such that
\beq\label{eq:defnArv}
   \begin{pmatrix} X & \alpha \\ \alpha^* & \beta \end{pmatrix} \in \Gamma(n+m)
\eeq
 it follows that  $\alpha=0$.  A coordinate free definition is as follows. The point $\Gamma\in K(n)$ is an Arveson boundary point of $K$ if for each $m$ each  $Y$ in $K(m)$ and isometry $V:\C^n\to \C^m$ such that $X= V^*Y V$
 it follows that $VX=YV$.

The following statement is a simple consequence of
Proposition \ref{prop:Euclidean-geometric}.

\begin{prop}
\label{prop:ArvboundaryEuclidean}
 Suppose $\Gamma$ is a free set and $n$ is a positive integer. If $X\in\Gamma(n)$ is an Arveson boundary point for $\Gamma$, then $X$ is a Euclidean extreme point of $\Gamma(n)$.
\end{prop}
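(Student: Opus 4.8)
The plan is to derive Proposition \ref{prop:ArvboundaryEuclidean} directly from Proposition \ref{prop:Euclidean-geometric}, whose hypotheses require only a fully free set but whose statement about Euclidean extreme points is phrased purely in terms of a $2n \times 2n$ dilation condition that is, on its face, weaker than the Arveson boundary condition. The key observation is that an Arveson boundary point forces $\alpha = 0$ for \emph{any} block size $m$ in \eqref{eq:defnArv}, in particular for $m = n$, which is precisely the dilation appearing in \eqref{eq:Xab}. So the essential content is: Arveson boundary $\Rightarrow$ the $m = n$ special case of the Arveson condition $\Rightarrow$ Euclidean extreme. The only mild subtlety is matching up hypotheses: Proposition \ref{prop:Euclidean-geometric} is stated for fully free sets, whereas here $\Gamma$ is only assumed free; but the proof of Proposition \ref{prop:Euclidean-geometric} that we invoke need not be re-examined since the claimed implication (Arveson extreme $\Rightarrow$ Euclidean extreme) can be argued self-contained from the definitions. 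I will therefore give a short direct argument rather than literally quoting Proposition \ref{prop:Euclidean-geometric}.

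First I would suppose $X \in \Gamma(n)$ is an Arveson boundary point and that, toward a contradiction, $X$ is \emph{not} a Euclidean extreme point of $\Gamma(n)$. Then there is a nonzero $\alpha \in \smatng$ with $X \pm \alpha \in \Gamma(n)$. Since $\Gamma$ is closed under direct sums, $(X+\alpha) \oplus (X-\alpha) \in \Gamma(2n)$, and since $\Gamma$ is closed under unitary similarity, conjugating by the unitary
\[
U = \frac{1}{\sqrt{2}} \begin{pmatrix} I_n & -I_n \\ I_n & I_n \end{pmatrix}
\]
from \eqref{eq:U} yields
\[
U^* \big((X+\alpha)\oplus(X-\alpha)\big) U = \begin{pmatrix} X & \alpha \\ \alpha & X \end{pmatrix} \in \Gamma(2n).
\]
Since $\alpha$ is self-adjoint, $\alpha^* = \alpha$, so this matrix has exactly the form \eqref{eq:defnArv} with $m = n$ and $\beta = X$. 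Because $X$ is an Arveson boundary point, we conclude $\alpha = 0$, contradicting the choice of $\alpha \neq 0$. Hence $X$ is a Euclidean extreme point of $\Gamma(n)$.

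There is essentially no obstacle here: the statement is a definitional specialization, and the one-line conceptual point is that the Arveson condition quantifies over all $m$ while the Euclidean-extreme condition (in its dilation form) needs only $m = n$ with the additional structure $\beta = X$, $\alpha = \alpha^*$. The only thing worth flagging is that one must check the block matrix produced is genuinely of the shape \eqref{eq:defnArv}, i.e.\ that the off-diagonal block is $\alpha$ and its adjoint $\alpha^*$ sits below; this is immediate because $\alpha \in \smatng$ is self-adjoint. If one prefers, the same conclusion follows verbatim by invoking Proposition \ref{prop:Euclidean-geometric} once one notes that the $Z$ of \eqref{eq:Xab} is a special instance of \eqref{eq:defnArv}, but the direct argument above avoids any discrepancy between the ``free'' hypothesis here and the ``fully free'' hypothesis there.
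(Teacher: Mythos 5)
Your proof is correct and is essentially the paper's argument: the paper simply declares the proposition a consequence of Proposition \ref{prop:Euclidean-geometric}, and your direct computation (dilate $(X+\alpha)\oplus(X-\alpha)$ and conjugate by $U$) is exactly the relevant half of that proposition's proof. Your observation that this direction needs only closure under direct sums and unitary similarity — so the ``free'' hypothesis suffices and no appeal to ``fully free'' is required — is a correct and worthwhile clarification, but does not change the substance.
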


The next lemma says for a matrix convex set  the property
\eqref{eq:defnArv} in the definition of an Arveson boundary point only needs to be verified for column tuples $\alpha$.

\begin{lemma}
 \label{lem:reduce}
   Suppose $K$ is matrix convex and
\[
  Z = \begin{pmatrix} X & \al\\ \al^* & \be\end{pmatrix} \in K(m+\ell).
\]
 If $v\in \mathbb R^{\ell}$ is a unit vector, then
\[
  Y =\begin{pmatrix} X & \al v \\ v^*\al^* &  v^*\be v
  \end{pmatrix}  \in K(m+1).
\]
\end{lemma}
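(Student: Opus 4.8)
The plan is to realize $Y$ as an isometric compression of $Z$ and then invoke matrix convexity directly. Write $\C^{m+\ell} = \C^m \oplus \C^\ell$ and let $V \colon \C^{m+1} \to \C^{m+\ell}$ be the isometry $V = I_m \oplus v$, i.e. $V$ acts as the identity on the first $\C^m$ summand and sends the single remaining coordinate to $v \in \C^\ell$ (this is an isometry precisely because $v$ is a unit vector). Computing $V^* Z_j V$ block by block, the $(1,1)$ block is $X_j$, the $(1,2)$ block is $\al_j v$, the $(2,1)$ block is $v^* \al_j^*$, and the $(2,2)$ block is $v^* \be_j v$; hence $V^* Z V = Y$. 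Since $K$ is matrix convex it is closed under isometric conjugation, and $Z \in K(m+\ell)$, so $Y = V^* Z V \in K(m+1)$.

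The only point requiring a word of care is that the lemma is stated for $v \in \R^\ell$ while the ambient matrices are complex self-adjoint; one should note $v^* \be_j v$ is self-adjoint (indeed real-symmetric entrywise is irrelevant — it is a number, and it equals its own conjugate since $\be_j = \be_j^*$), so $Y$ is genuinely a tuple in $\smatg$, and the block computation is unaffected by whether $v$ is taken in $\R^\ell$ or $\C^\ell$. I would carry out the steps in the order: (i) define $V$ and check it is an isometry; (ii) perform the block multiplication to identify $V^*ZV$ with $Y$; (iii) apply closure under isometric conjugation. There is essentially no obstacle here — the content is entirely in recognizing the right isometry $V = I_m \oplus v$, and once that is written down the rest is a one-line matrix computation together with the definition of matrix convexity.

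(Remark for the write-up: if one prefers not to single out the first block, the same argument shows more generally that $Z \mapsto (I_m \oplus W)^* Z (I_m \oplus W)$ lands in $K$ for any isometry $W \colon \C^k \to \C^\ell$, and Lemma \ref{lem:reduce} is the case $k=1$; this is the form that will actually be used to reduce the verification of the Arveson boundary condition \eqref{eq:defnArv} to column tuples $\al$.)
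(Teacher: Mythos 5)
Your proof is correct and is essentially identical to the paper's: the paper also takes the isometry $W=\begin{pmatrix} I & 0\\ 0 & v\end{pmatrix}$ and observes $W^*ZW=Y\in K(m+1)$ by closure under isometric conjugation. Nothing further is needed.
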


\begin{proof}
 Consider the isometry
\[
  W= \begin{pmatrix}  I_n & 0 \\ 0 & v\end{pmatrix}.
\]
 Since $K$ is matrix convex,
\[
  W^* Z W = Y \in K(m+1).\qedhere
\]
\end{proof}

\begin{lemma}
 \label{lem:scalar enough for arv}
  Suppose $\Gamma\subset \smatg$ is matrix convex.  A tuple  $X\in \smatng$ is in the Arveson boundary
  of  $\Gamma$ if and only if, given a $g$-tuple $\al$  from $\C^n$
  and $\be\in \C^g$ such that
\[
   \begin{pmatrix} X & \al \\ \al^* & \be \end{pmatrix} \in \Gamma
\]
 it follows that $\al=0$.
\end{lemma}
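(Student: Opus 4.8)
The plan is to derive the general matrix form of the Arveson boundary condition from the scalar form in the statement by compressing the lower--right block of a dilation down to size one, using Lemma~\ref{lem:reduce}.

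One implication is immediate. If $X\in\arv\Gamma$ in the sense of \eqref{eq:defnArv}, then taking $m=1$ there says precisely that whenever $\alpha$ is a $g$-tuple of columns from $\mathbb{C}^n$ and $\beta\in\mathbb{R}^g$ with $\begin{pmatrix} X & \alpha\\ \alpha^* & \beta\end{pmatrix}\in\Gamma$, then $\alpha=0$. So the real content is the converse, and this is where Lemma~\ref{lem:reduce} enters.

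For the converse, assume the scalar condition and suppose, for some positive integer $m$, some $g$-tuple $\alpha$ of $n\times m$ matrices, and some $\beta\in\mathbb{S}_m^g$, that
\[
 Z=\begin{pmatrix} X & \alpha\\ \alpha^* & \beta\end{pmatrix}\in\Gamma(n+m).
\]
For each unit vector $v\in\mathbb{R}^m$, Lemma~\ref{lem:reduce} (applied with its upper-left block of size $n$ and its lower block of size $\ell=m$) yields
\[
 Y_v=\begin{pmatrix} X & \alpha v\\ v^*\alpha^* & v^*\beta v\end{pmatrix}\in\Gamma(n+1),
\]
where $\alpha v=(\alpha_1 v,\dots,\alpha_g v)$ is a $g$-tuple of columns in $\mathbb{C}^n$ and $v^*\beta v\in\mathbb{R}^g$. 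The scalar hypothesis applied to $Y_v$ forces $\alpha v=0$, i.e.\ $\alpha_j v=0$ for every $j$. Letting $v$ range over all real unit vectors, which span $\mathbb{R}^m$ over $\mathbb{R}$, and noting that $\mathbb{R}^m$ in turn spans $\mathbb{C}^m$ over $\mathbb{C}$, we conclude $\alpha_j=0$ for each $j$, hence $\alpha=0$; thus $X\in\arv\Gamma$.

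I do not anticipate a genuine obstacle here: the only points needing care are the bookkeeping of block sizes in the invocation of Lemma~\ref{lem:reduce}, and the observation that testing against real unit vectors $v$ already annihilates the complex matrices $\alpha_j$ because $\mathbb{R}^m$ spans $\mathbb{C}^m$. (One could equally run the argument with complex $v$, since the isometry $\begin{pmatrix} I_n & 0\\ 0 & v\end{pmatrix}$ used in the proof of Lemma~\ref{lem:reduce} remains an isometry and $v^*\beta v$ remains self-adjoint, but this refinement is not needed.)
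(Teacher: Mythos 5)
Your proof is correct and follows exactly the route the paper takes: the paper's entire proof is ``Apply Lemma \ref{lem:reduce},'' and you have simply written out the details of that application (compressing the lower-right block by unit vectors $v$ and letting $v$ range over a spanning set to conclude $\al=0$). No issues.
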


\begin{proof}
Apply Lemma \ref{lem:reduce}.
\end{proof}

\begin{lemma}
\label{lem:boundaryisfree}
If $\Gamma\subset \smatg$ is a free set, then the Arveson boundary of $\Gamma$ is closed with respect to  direct sums and unitary similarity; i.e., it is a free set.
\end{lemma}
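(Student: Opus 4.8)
The plan is to verify the two closure conditions separately, using only that $\Gamma$ is a free set---closed under direct sums and unitary similarity---together with the defining property of Arveson boundary points. Since no matrix convexity is available, every manipulation must be a direct sum or a unitary conjugation (in particular a coordinate permutation), never a compression to a non-reducing subspace.

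\emph{Unitary similarity.} Given $X\in\arv\Gamma(n)$ and an $n\times n$ unitary $U$, first note $U^*XU\in\Gamma(n)$ because $\Gamma$ is free. Now suppose $\begin{pmatrix} U^*XU & \alpha \\ \alpha^* & \beta\end{pmatrix}\in\Gamma(n+m)$. Conjugating by the unitary $U\oplus I_m$ (allowed since $\Gamma$ is closed under unitary similarity) yields $\begin{pmatrix} X & U\alpha \\ (U\alpha)^* & \beta\end{pmatrix}\in\Gamma(n+m)$, and since $X$ is an Arveson boundary point this forces $U\alpha=0$, hence $\alpha=0$. Thus $U^*XU\in\arv\Gamma$.

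\emph{Direct sums.} Given $X\in\arv\Gamma(n)$ and $Y\in\arv\Gamma(m)$, we have $X\oplus Y\in\Gamma(n+m)$. Suppose
\[
 \begin{pmatrix} X\oplus Y & \alpha \\ \alpha^* & \beta\end{pmatrix}\in\Gamma(n+m+\ell),
\]
and decompose the $(n+m)\times\ell$ tuple $\alpha$ as $\alpha=\begin{pmatrix}\alpha_1\\\alpha_2\end{pmatrix}$ with $\alpha_1$ of size $n\times\ell$ and $\alpha_2$ of size $m\times\ell$. Viewing the natural $3\times 3$ block matrix above (block sizes $n,m,\ell$) instead as a $2\times 2$ block matrix with block sizes $n$ and $m+\ell$, it reads $\begin{pmatrix} X & \rho \\ \rho^* & \gamma\end{pmatrix}$ with $\rho=\begin{pmatrix} 0 & \alpha_1\end{pmatrix}$ of size $n\times(m+\ell)$ and $\gamma=\begin{pmatrix} Y & \alpha_2 \\ \alpha_2^* & \beta\end{pmatrix}$ a self-adjoint $(m+\ell)$-tuple; since $X\in\arv\Gamma$, we conclude $\rho=0$, i.e.\ $\alpha_1=0$. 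Conjugating by the coordinate permutation that interchanges the first ($n$-dimensional) and second ($m$-dimensional) blocks---again a unitary similarity---puts $Y$ in the top-left corner, and running the identical argument with $Y$ in place of $X$ gives $\alpha_2=0$. Hence $\alpha=0$ and $X\oplus Y\in\arv\Gamma$. Combined with the previous paragraph, this shows $\arv\Gamma$ is closed under direct sums and unitary similarity, hence is a free set.

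I do not expect a genuine obstacle here; the only point requiring care is precisely that $\Gamma$ is merely free (not matrix convex, and not fully free), so the earlier reduction lemmas such as Lemma~\ref{lem:reduce} and Lemma~\ref{lem:scalar enough for arv} are not available, and the two block regroupings above are arranged exactly so that only direct sums and unitary (permutation) conjugations are invoked.
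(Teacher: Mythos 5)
Your proof is correct and follows essentially the same route as the paper: the unitary-similarity part is handled by conjugating the dilation by $U\oplus I$, and the direct-sum part by regrouping the $3\times3$ block dilation of $X\oplus Y$ as a $2\times2$ block matrix with $X$ in the corner to kill $\alpha_1$, then permuting $X$ and $Y$ to kill $\alpha_2$. The paper's proof performs exactly this regrouping and role reversal (and dismisses the unitary case as trivial), so no further comment is needed.
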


\bep
The proof regarding  unitary conjugation is trivial. Onward to direct sums.
Suppose $X,Y$ are both in the Arveson boundary  and
\beq
Z:=\bem
X & 0 &                          B_{11} & B_{12}\\
0 & Y &                          B_{21} & B_{22} \\
 B_{11} & B_{21}^* &    D_{11} & D_{12} \\
  B_{12}^* & B_{22}^*   &  D_{12}^* & D_{22}
\eem
\eeq
is in  $ \Gamma$.  Writing $Z$ as
\[
 \begin{pmatrix} X & \begin{pmatrix} 0 & B_{11}&B_{12} \end{pmatrix} \\ \begin{pmatrix} 0 \\ B_{11}^* \\ B_{12}^* \end{pmatrix}  &
    \begin{pmatrix} Y &  B_{21} & B_{22} \\
      B_{21}^* &    D_{11} & D_{12} \\
  B_{22}^*   &  D_{12}^* & D_{22}\end{pmatrix} \end{pmatrix} \in \Gamma
\]
and using the assumption that $X\in \arv\Gamma$ it follows that $B_{11}=0$
and $B_{12}=0$. Reversing the roles of $X$ and $Y$ shows $B_{21}=0$ and $B_{22}=0$.
\eep

\begin{prop}
 \label{lem:SEinCTV}
  If  $\Gamma$ is a fully free set, 
  then $\arv\Gamma = \arv\mco\Gamma.$
\end{prop}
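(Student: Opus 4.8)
The statement asserts $\arv\Gamma = \arv\mco\Gamma$ for a fully free set $\Gamma$. The plan is to prove the two inclusions separately. The inclusion $\arv\mco\Gamma \subseteq \arv\Gamma$ should be essentially immediate: $\Gamma \subseteq \mco\Gamma$, and being an Arveson boundary point is a condition that only gets \emph{harder} to satisfy in a larger ambient set (more tuples $Z$ of the form \eqref{eq:Z} are available to test against). More precisely, if $X \in \Gamma(n) \cap \arv\mco\Gamma$ and $Z = \begin{pmatrix} X & \al \\ \al^* & \be \end{pmatrix} \in \Gamma(n+m)$, then $Z \in \mco\Gamma(n+m)$ as well, so the Arveson condition for $\mco\Gamma$ forces $\al = 0$; hence $X \in \arv\Gamma$. (One should note $\arv\mco\Gamma$ consists of tuples in $\mco\Gamma$, but combined with the reverse inclusion it will follow a posteriori that such points actually lie in $\Gamma$.)

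The substantive direction is $\arv\Gamma \subseteq \arv\mco\Gamma$. So fix $X \in \arv\Gamma$, with $X \in \Gamma(n)$; certainly $X \in \mco\Gamma(n)$. Suppose $Z = \begin{pmatrix} X & \al \\ \al^* & \be \end{pmatrix} \in \mco\Gamma(n+m)$; I must show $\al = 0$. By Proposition \ref{prop:hull-abstract}, there is a positive integer $N$, a tuple $W \in \Gamma(N)$, and an isometry $V : \C^{n+m} \to \C^N$ with $Z = V^* W V$. The idea is to "read off" the relation $VX = WV$ restricted to the $X$-block and conclude $\al$ vanishes. Write $V$ in block form according to the decomposition $\C^{n+m} = \C^n \oplus \C^m$: let $V_1 : \C^n \to \C^N$ and $V_2 : \C^m \to \C^N$ be the two block columns, so $V_1^* V_1 = I_n$ (hence $V_1$ is itself an isometry), $V_2^* V_2 = I_m$, $V_1^* V_2 = 0$, and $X = V_1^* W V_1$. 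Since $\Gamma$ is fully free and $X \in \Gamma(n)$ with $X = V_1^* W V_1$ an isometric compression of $W \in \Gamma(N)$, the coordinate-free characterization of the Arveson boundary (in Section \ref{ssec:arv}) applies: $X \in \arv\Gamma$ together with $X = V_1^* W V_1$ gives $V_1 X = W V_1$, i.e. $V_1$ intertwines $X$ and $W$.

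Now exploit that intertwining. From $V_1 X = W V_1$ we get $X = V_1^* W V_1$ (consistent) and, more usefully, $\al = V_1^* W V_2$. Compute $\al = V_1^* W V_2 = (W V_1)^* V_2 = (V_1 X)^* V_2 = X^* V_1^* V_2 = X \cdot 0 = 0$, using $W$ self-adjoint (componentwise), $WV_1 = V_1 X$, $X$ self-adjoint, and $V_1^* V_2 = 0$. Hence $\al = 0$, so $X \in \arv\mco\Gamma$. The one point that needs care, and is the main thing to get right, is the legitimacy of invoking the coordinate-free form of the Arveson boundary condition: that form is stated for free sets, and one must check it genuinely applies here, namely that "$X = V_1^* W V_1$ with $W \in \Gamma$ an isometric dilation" is exactly the hypothesis of that characterization, and that the conclusion $V_1 X = W V_1$ is exactly what is needed. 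This is where full freeness of $\Gamma$ is used (to ensure $\mco\Gamma$ is built from genuine isometric compressions and to make the coordinate-free definition available). Everything else is block-matrix bookkeeping. I would also remark briefly that, as a byproduct, the equality of the two boundaries shows $\arv\mco\Gamma$ in fact lies inside $\Gamma$, even though a priori it was only known to lie in $\mco\Gamma$.
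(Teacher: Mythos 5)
Your argument for the inclusion $\arv\Gamma\subseteq\arv\mco\Gamma$ is correct and is essentially the paper's: dilate $Z\in\mco\Gamma$ to a point $W$ of $\Gamma$ via Proposition \ref{prop:hull-abstract} and apply the Arveson condition for $X$ inside $\Gamma$. Your coordinate-free finish ($\al=V_1^*WV_2=XV_1^*V_2=0$ from the intertwining $WV_1=V_1X$) is a clean packaging of what the paper does by writing out the $3\times 3$ block dilation of $X$ explicitly.

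The gap is in the other inclusion. What you actually prove is $\Gamma\cap\arv\mco\Gamma\subseteq\arv\Gamma$ (this is Lemma \ref{lem:preCTV} applied to $\Gamma\subseteq\mco\Gamma$), and you defer the question of whether a point of $\arv\mco\Gamma$ lies in $\Gamma$ at all, asserting it "will follow a posteriori" from the reverse inclusion. It does not: from $\arv\Gamma\subseteq\arv\mco\Gamma$ together with $\Gamma\cap\arv\mco\Gamma\subseteq\arv\Gamma$ one cannot exclude a point $A\in\arv\mco\Gamma\setminus\Gamma$, and any such point would falsify the claimed equality, since $\arv\Gamma\subseteq\Gamma$ by definition. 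This missing step is precisely where the hypothesis that $\Gamma$ is \emph{fully} free (closed under restriction to reducing subspaces) enters --- not, as you suggest, in legitimizing the coordinate-free form of the Arveson condition, for which being a free set suffices. The repair is the paper's argument: given $A\in\arv\mco\Gamma$, Proposition \ref{prop:hull-abstract} together with closure under unitary similarity produces $X=\begin{pmatrix}A&B\\B^*&C\end{pmatrix}\in\Gamma\subseteq\mco\Gamma$; the Arveson condition for $A$ in $\mco\Gamma$ forces $B=0$, so $A$ is the restriction of $X$ to a reducing subspace, and full freeness yields $A\in\Gamma$; only then does your easy containment argument close the loop.
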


\begin{lemma}
 \label{lem:preCTV}
   If $\Gamma\subset\Gamma^\prime \subset \smatg$ and $X\in\Gamma$ is an Arveson boundary point of $\Gamma^\prime$, then $X$ is an Arveson boundary point of $\Gamma$.
\end{lemma}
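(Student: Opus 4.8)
\textbf{Proof proposal for Lemma \ref{lem:preCTV}.}

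The plan is to verify the defining property of an Arveson boundary point of $\Gamma$ directly from the corresponding property for $\Gamma'$. Suppose $X\in\Gamma(n)$ and that $X$ is an Arveson boundary point of $\Gamma'$. To show $X\in\arv\Gamma$, I fix a positive integer $m$, a $g$-tuple $\al$ of size $n\times m$, and $\be\in\smatmg$, and assume that
\[
Z=\begin{pmatrix} X & \al \\ \al^* & \be\end{pmatrix}\in\Gamma(n+m).
\]
The only thing to do is observe that, since $\Gamma\subseteq\Gamma'$, we also have $Z\in\Gamma'(n+m)$. Now apply the hypothesis that $X$ is an Arveson boundary point of $\Gamma'$ to this same $Z$: it follows that $\al=0$. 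As $m$, $\al$, $\be$ were arbitrary, this is exactly the statement that $X\in\arv\Gamma$.

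There is essentially no obstacle here: the lemma is a monotonicity statement, and the defining condition for an Arveson boundary point is phrased as "for all dilations $Z$ lying in the set, $\al=0$," which is vacuously inherited when one shrinks the ambient set. The only point requiring any care is the implicit one that $X$ itself must lie in $\Gamma$ for the conclusion "$X\in\arv\Gamma$" to even make sense, and this is supplied by the hypothesis $X\in\Gamma$. One could equivalently phrase the argument using the coordinate-free formulation from the definition of the Arveson boundary: if $Y\in\Gamma(m)$ and $V\colon\C^n\to\C^m$ is an isometry with $X=V^*YV$, then $Y\in\Gamma'(m)$ as well, so $VX=YV$; but the matricial version above is the most economical.

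\begin{remark}\rm
This lemma, together with Lemma \ref{lem:SEinCTV} and the obvious inclusion $\Gamma\subseteq\mco\Gamma$, gives the nontrivial direction of Proposition \ref{lem:SEinCTV}: one inclusion $\arv\Gamma\supseteq\arv\mco\Gamma$ is immediate from Lemma \ref{lem:preCTV} applied with $\Gamma'=\mco\Gamma$, while the reverse inclusion requires the real work of showing that an Arveson boundary point of $\Gamma$ survives passage to the matrix convex hull.
\end{remark}
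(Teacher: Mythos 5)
Your proof is correct and is exactly the argument the paper has in mind (its own proof reads simply ``Evident''): a dilation of $X$ inside $\Gamma$ is also a dilation inside $\Gamma'$, so the Arveson condition for $\Gamma'$ forces $\al=0$. One caveat about your closing remark: the inclusion $\arv\mco\Gamma\subseteq\arv\Gamma$ is \emph{not} immediate from this lemma alone, since an Arveson boundary point of $\mco\Gamma$ is not a priori an element of $\Gamma$ --- establishing that membership is where the paper uses Proposition \ref{prop:hull-abstract} together with the fully-free (reducing-subspace) hypothesis, and only then does Lemma \ref{lem:preCTV} apply.
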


\begin{proof}
 Evident.
\end{proof}

\begin{proof}[Proof of Proposition \ref{lem:SEinCTV}]
 Suppose $A$ is in $\arv\mco\Gamma.$  
 Since $A$ is in $\mco\Gamma$, by Proposition \ref{prop:hull-abstract}
  there exists an $X$ in $\Gamma$ of the form
$$
X=\bem A & B     \\
B^*  &C
\eem.
$$
Since $X\in\mco\Gamma$ and  $A$ is an
Arveson boundary point of $\mco\Gamma$,
 it follows that $B=0$.
 Since $\Gamma$ is closed with respect to restrictions
  to reducing subspaces, $A\in\Gamma$.   By Lemma \ref{lem:preCTV}, $A\in \arv\Gamma$.

For the converse, suppose
$A\in\arv\Gamma$ and let $\alpha,W$ such that
$\al\neq0$ such that
\[
 Z=
\bem
A&\al \\ \al^*&W\eem \in\mco\Gamma
\]
 be given.  By Proposition \ref{prop:hull-abstract}, there
is a dilation $X$ of $Z$ lying in $\Gamma$; i.e., there exists $\de, \ga, Y$ such that
\begin{equation}
 \label{eq:dilateA}
X=
\bem
A &\al & \ga  \\ \al^*&W & \de \\
\ga^* & \de^* & Y
\eem \in\Gamma.
\end{equation}
 Since $A\in \arv\Gamma$ and $X\in\Gamma$, it follows that $\alpha=0$. Hence $A\in \arv\mco(\Gamma)$.
\end{proof}

\begin{remark}\rm
 \label{rem:stoopid+}
   Assuming $\Gamma$ is compact, we do not know if Proposition \ref{lem:SEinCTV} holds with $\arv\mco\Gamma$ replaced by $\arv\cmco\Gamma$ a statement equivalent to
$\arv\mco\Gamma=\arv\cmco\Gamma.$
\end{remark}

\subsection{Absolute extreme points vs.~\arvb}\label{sec:absE}
  The tuple $X\in\smatg$ is \df{irreducible} if there does not
  exist a nontrivial common invariant subspace for
  the set $\{X_1,\dots,X_g\}.$ Observe  invariant equals reducing
  since the matrices $X_j$ are self-adjoint.

\begin{thm}
 \label{thm:wearekleski}
   Suppose $K$ is a fully free set.   A point $X\in K$ is an absolute extreme point of $K$ if and only if $X$ is irreducible and in the Arveson boundary of $K$.
\end{thm}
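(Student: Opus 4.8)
The plan is to prove the two implications separately, using Proposition \ref{prop:hull-abstract} and Lemma \ref{lem:reduce} as the main tools, together with the definitions of absolute extreme point and Arveson boundary point from the introduction.

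For the easier direction, suppose $X\in K(n)$ is an absolute extreme point. First I would show $X$ is irreducible: if $X$ were unitarily equivalent to $X'\oplus X''$ with both summands nonzero in size, then this would be a weakly proper matrix convex combination of $X'$ and $X''$; the defining property of an absolute extreme point forces (roughly) $X\usim X'$ or $X\usim X''$ after possibly further splitting, which is incompatible with a genuine nontrivial reduction once one tracks the sizes carefully. (This is the point where I need to be careful about the bookkeeping in the definition of absolute extreme point, but it is essentially the standard argument that absolute extreme points of free sets are irreducible.) Next I would show $X\in\arv K$: given $Z=\begin{pmatrix} X & \alpha\\ \alpha^* & \beta\end{pmatrix}\in K(n+m)$, I want $\alpha=0$. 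Writing $Z$ as a weakly proper matrix convex combination — specifically $Z = V^* Z V + W^* Z W$ where $V=\begin{pmatrix} I_n\\ 0\end{pmatrix}$ picks out $X\in K(n)$ as a compression — lets me invoke the absolute extreme point property with $Y^1 = X$ (size $n\le n+m$) and $Y^2 = Z$ (size $n+m > n$). The property then yields a $Z'$ with $Z\usim X\oplus Z'$; comparing this block decomposition with the original form of $Z$ and using irreducibility/size counting forces $\alpha=0$. Some care is needed to set up the convex combination so that all coefficient maps are nonzero.

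For the converse, suppose $X$ is irreducible and $X\in\arv K$, and suppose $X=\sum_\ell V_\ell^* Y^\ell V_\ell$ is a weakly proper matrix convex combination with $Y^\ell\in K(n_\ell)$ and all $V_\ell\neq 0$. Fix an index $j$. The key move is to dilate: form the tuple $Y^j$ together with $X$ into a single tuple in $K$ whose compression to the $X$-block is $X$ and whose off-diagonal block is (a multiple of) something built from $V_j$; concretely, use the isometry $V_j/\|V_j\|$ (after rescaling, or pass to $\widetilde V_j$ with $\widetilde V_j^*\widetilde V_j\preceq I$ and complete it), so that the matrix convex structure of $K$ produces $\begin{pmatrix} X & \alpha_j\\ \alpha_j^* & \beta_j\end{pmatrix}\in K$. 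Since $X\in\arv K$, this forces $\alpha_j=0$, which says the range of the relevant map is invariant, i.e. $Y^j$ has a reducing subspace on which it is unitarily equivalent to $X$: either $Y^j\usim X$ (when $n_j\le n$, forced to be $n_j=n$) or $Y^j\usim X\oplus Z^j$ for some $Z^j\in K$ (when $n_j>n$, using that $K$ is fully free so the complementary reducing block lies in $K$). This is exactly the conclusion in the definition of absolute extreme point. Irreducibility of $X$ is used to rule out degenerate splittings and to pin down the unitary equivalence rather than a mere compression.

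The main obstacle I anticipate is the first direction's bookkeeping: carefully extracting irreducibility and the Arveson property from the somewhat intricate definition of absolute extreme point (with its case split on whether $n_j\le n$ or $n_j>n$, and the "weakly proper" hypothesis allowing zero-padding issues), and in particular arranging the matrix convex combination witnessing a given block dilation $Z$ so that the coefficient maps are all nonzero so the absolute-extreme hypothesis genuinely applies. In the converse direction the delicate point is the passage from $\alpha_j=0$ to the correct unitary-equivalence statement, which is where full freeness (closedness under reducing subspaces) and irreducibility of $X$ both get used.
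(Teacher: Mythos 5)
Your reverse direction is essentially the paper's argument and is fine in outline, though the dilation you need is not built from $Y^j$ and a rescaled $V_j$ alone (note $V_j/\|V_j\|$ is not an isometry unless $V_j^*V_j$ is scalar): one forms $E=\oplus_\ell Y^\ell\in K$ and the genuine isometry $C=\operatorname{col}(V_1,\dots,V_\nu)$, so that $X=C^*EC$, the Arveson property gives $CX=EC$, hence $V_jX_k=Y^j_kV_j$ for every $j$, and then irreducibility of $X$ forces $V_j^*V_j$ to be a nonzero scalar multiple of $I$ and the range of $V_j$ to reduce $Y^j$, yielding $Y^j\usim X\oplus Z^j$ with $Z^j\in K$ by full freeness.

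The forward direction, however, has a genuine gap at its last step. Applying the definition of absolute extreme point to the one--term weakly proper combination $X=V^*ZV$ (your displayed identity ``$Z=V^*ZV+W^*ZW$'' does not typecheck, but this is what you mean) only yields $Z\usim X\oplus Z'$ for some $Z'\in K$, and from this one \emph{cannot} conclude $\al=0$ by ``comparing block decompositions and size counting,'' even using irreducibility. Indeed, suppose $X$ is irreducible and admits a nontrivial dilation $\ga\in K(m)$, say $X=T^*\ga T$ with $T$ an isometry whose range is not reducing for $\ga$. Put $Z=X\oplus\ga\in K(n+m)$ and compress $Z$ to the $n$--dimensional subspace $\{0\}\oplus\ran(T)$: this compression equals $X$, the off--diagonal block $\al$ is nonzero because $\ran(T)$ does not reduce $\ga$, and yet $Z\usim X\oplus\ga$ with matching sizes. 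So the conclusion of the absolute--extreme--point definition is consistent with $\al\neq0$; ruling this out is exactly the assertion that $X\in\arv K$, which is what you are trying to prove, so the argument as written is circular. The repair is the paper's two--step route, which you list among your tools but never deploy: first use Lemma \ref{lem:reduce} (equivalently Lemma \ref{lem:scalar enough for arv}) to reduce to column dilations, i.e.\ $m=1$ and $\be\in\R^g$ (note the counterexample above needs $m\ge n$, so it cannot occur at $m=1$); then, with $\ga$ a tuple of scalars, the relation $\begin{pmatrix} X_j & \al_j\\ \al_j^* & \be_j\end{pmatrix}\usim X_j\oplus\ga_j$ forces $\al_j=0$ by the rank--one characteristic--polynomial degree count of Lemma \ref{lem:interlace}. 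That interlacing argument is a real step special to one--dimensional dilations and must appear explicitly.
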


The fact that an absolute extreme point is irreducible is rather routine. For completeness, we include a proof.

\begin{lemma}
 \label{lem:irreduc}
 Suppose $K\subset \smatg$ is a fully free set.
  If $X\in K$ is either a matrix extreme point or an absolute extreme point of $K$, then $X$ is irreducible.
\end{lemma}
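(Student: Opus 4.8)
The plan is to argue by contrapositive: suppose $X\in K(n)$ is reducible and show it cannot be a matrix extreme point, nor an absolute extreme point. Reducibility means $\C^n$ decomposes as an orthogonal direct sum $\cH_1\oplus\cH_2$ with both summands nontrivial and each reducing for all $X_j$; equivalently, after a unitary change of coordinates, $X\usim Y^1\oplus Y^2$ with $Y^1\in K(n_1)$, $Y^2\in K(n_2)$, $n_1,n_2\geq 1$ and $n_1+n_2=n$. Here I am using that $K$ is fully free, so the restrictions $Y^i$ of $X$ to its reducing subspaces lie in $K$. Writing $V_i:\C^n\to\C^{n_i}$ for the (co)isometries onto the two blocks, we have $X=V_1^*Y^1V_1+V_2^*Y^2V_2$ with $V_1^*V_1+V_2^*V_2=I_n$, so this is a genuine matrix convex combination of the form \eqref{eq:SumY}.

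First, for the matrix extreme case: the $V_i$ above are surjective (they are the coordinate projections onto the blocks), so this is a \emph{proper} matrix convex combination. If $X$ were a matrix extreme point we would need $n=n_i$ for each $i$, forcing $n_1=n_2=n$, which contradicts $n_1+n_2=n$ with $n_1,n_2\geq 1$. Hence $X$ is not matrix extreme. Second, for the absolute extreme case: the same decomposition $X\usim Y^1\oplus Y^2$ with both $V_i$ nonzero is a \emph{weakly proper} matrix convex combination. Being an absolute extreme point would require, for each $i$, either $n_i\leq n$ and $X\usim Y^i$ (so $n_i=n$), or $n_i>n$; but $n_i<n$ strictly for both $i$, so neither alternative holds, a contradiction. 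Therefore a reducible $X$ is neither a matrix extreme point nor an absolute extreme point, which is the desired contrapositive.

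There is essentially no obstacle here; the only point requiring a touch of care is invoking that $K$ is \emph{fully free} to guarantee $Y^1,Y^2\in K$ — a free set that is merely closed under isometric conjugation already gives this since the block projections are coisometries, but stating the hypothesis as ``fully free'' makes this immediate. One should also remark, as the paper does just before the lemma, that for tuples of self-adjoint matrices an invariant subspace is automatically reducing, so the notion of irreducibility used here is unambiguous and the orthogonal complement of an invariant subspace is again invariant.
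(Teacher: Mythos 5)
Your proof is correct and takes essentially the same route as the paper: decompose $\C^n$ into the two reducing subspaces, write $X$ as a proper matrix convex combination of the restrictions $Y^1,Y^2\in K$, and derive a contradiction from the size count $n_1+n_2=n$. The only (harmless) difference is that for the absolute extreme case the paper simply invokes that an absolute extreme point is a matrix extreme point, whereas you check the weakly proper definition directly; both work.
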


\begin{proof}
  Suppose $X\in K(n)$ is a matrix extreme point of $K$.
   If $X$ is not irreducible, then
  there exists nontrivial reducible subspaces $H_1$ and $H_2$ for the set $\{X_1,\dots,X_g\}$
  such that $\mathbb C^n = H_1\oplus H_2$.
   Let $V_\ell:H_\ell\to \mathbb C^n$
  denote the inclusion mappings. Let $Y^\ell= V_\ell V_\ell^* X V_\ell V_\ell^*$.  Since $K$ is closed with respect to reducing subspaces, $Y^\ell\in K$ and moreover the mappings $V_\ell V_\ell^*$ are proper.   Evidently,
\[
  X = \sum V_\ell^* Y^\ell V_\ell.
\]
 Since this sum  is a  proper combination from $K$, we have that $X$ and $Y^\ell$ are unitarily equivalent, a contradiction since the size of $X$ strictly exceeds that of $Y_\ell$.  If $X$ is an absolute extreme point, then $X$ is a matrix extreme point and hence is irreducible by what has already been proved.
\end{proof}

\subsubsection{A non-interlacing property}
In this subsection we present a lemma on
an interlacing property we will use in the proofs of
Theorems \ref{thm:wearekleski} and \ref{thm:weak-kleski};
cf.~Cauchy's interlacing theorem \cite[Theorem 4.3.8]{HJ12}.

\begin{lemma}\label{lem:interlace}
Let  $D$ denote the $n\times n$  diagonal matrix with diagonal  entries $\lambda_1,\dots,\lambda_n\in\C$ and let
\[
  E = \begin{pmatrix} D & a\\ a^* &  e\end{pmatrix}.
\]
 If $f\in \C$ and  $E$ is unitarily equivalent to
\[
  F=\begin{pmatrix} D&0\\0 & f\end{pmatrix},
\]
then $a=0$.
\end{lemma}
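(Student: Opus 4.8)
The cleanest route is to compare low-order moments, using that unitarily equivalent matrices have equal traces of all powers; only the first two powers are needed. First, from $\tr(E)=\tr(F)$ one gets $\tr(D)+e=\tr(D)+f$, hence $e=f$. Next compute $E^2$ block\-wise:
\[
 E^2=\begin{pmatrix} D^2+aa^* & Da+ea\\ a^*D+ea^* & a^*a+e^2\end{pmatrix},
\]
so that $\tr(E^2)=\tr(D^2)+\tr(aa^*)+a^*a+e^2=\tr(D^2)+2\|a\|^2+e^2$, where I use $\tr(aa^*)=a^*a=\|a\|^2:=\sum_i|a_i|^2$. On the other hand $\tr(F^2)=\tr(D^2)+f^2$. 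Equating $\tr(E^2)=\tr(F^2)$ and substituting $e=f$ yields $2\|a\|^2=0$, hence $a=0$.

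\textbf{Remarks on hazards.} There is essentially no obstacle here. The two points to keep in mind are: (i) the scalars $\lambda_i,e,f$ may be complex and $E,F$ need not be self\-adjoint, but the argument only uses that unitary equivalence preserves $\tr$ and $\tr(\cdot^2)$, and the quantity $\tr(aa^*)=a^*a=\sum_i|a_i|^2$ is nonnegative regardless; (ii) one must not confuse $a$ (a single column vector, so $a^*a$ is a scalar) with the tuple notation used elsewhere in the paper.

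\textbf{Alternative (interlacing flavor).} One may instead compare characteristic polynomials, which is the viewpoint behind the reference to Cauchy interlacing. Since $E$ is unitarily equivalent to $F=D\oplus(f)$, $\det(xI-E)=\det(xI-F)=(x-f)\prod_i(x-\lambda_i)$; for $x\notin\{\lambda_1,\dots,\lambda_n\}$ the Schur complement formula gives $\det(xI-E)=\prod_i(x-\lambda_i)\,\big(x-e-a^*(xI-D)^{-1}a\big)$. Cancelling $\prod_i(x-\lambda_i)$ shows $a^*(xI-D)^{-1}a=\sum_i\frac{|a_i|^2}{x-\lambda_i}$ equals the constant $f-e$; since this rational function vanishes at infinity it is identically $0$, and matching residues at each distinct value among the $\lambda_i$ forces $\sum_{i:\lambda_i=\lambda}|a_i|^2=0$, hence $a=0$ (and $e=f$). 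I will present the moment argument, as it is shorter.
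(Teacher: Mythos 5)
Your main argument is correct, and it takes a genuinely different route from the paper. The paper compares characteristic polynomials: it expands $\det(E-tI)=p(t)(e-t)-\sum_j|a_j|^2\prod_{k\ne j}(\lambda_k-t)$ with $p(t)=\prod_k(\lambda_k-t)$, equates this with $\det(F-tI)=p(t)(f-t)$, and concludes by a degree count that both sides of $p(t)(e-f)=\sum_j|a_j|^2\prod_{k\ne j}(\lambda_k-t)$ must vanish, the leading coefficient of the right-hand side being $\pm\sum_j|a_j|^2$. Your ``alternative'' proof via Schur complements and residues is essentially this argument in partial-fraction form; your primary proof via $\tr(E)=\tr(F)$ and $\tr(E^2)=\tr(F^2)$ is different and, in my view, cleaner. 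Both proofs ultimately hinge on the same nonnegativity $\sum_i|a_i|^2\ge 0$, which is supplied by the $a$/$a^*$ structure of the off-diagonal blocks (this, rather than similarity versus unitary equivalence, is where ``self-adjointness'' really enters; as you note, the diagonal entries may be complex without harm). Your trace argument buys two things: it needs only the first two power traces rather than the full characteristic polynomial, and it never uses that $D$ is diagonal, so it proves the slightly more general statement with $D$ an arbitrary square matrix appearing as the common corner block. The paper's polynomial argument, on the other hand, is the one that generalizes naturally to genuine interlacing statements, which is presumably why it was chosen.
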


\begin{proof}
  Denote the entries of $a$ by $a_1,\dots,a_n$. By induction,
\[
 \det(E-tI) = p(t)(e-t) - \sum a_j^2 \prod_{k\ne j} (\lambda_k-t),
\]
 where $p(t) =\prod (\lambda_k-t)$.
 On the other hand, $\det(F-tI)=p(t)(f-t)$. Thus,
\[
 p(t)(e-f) = p(t)(e-t) -p(t)(f-t) = \sum a_j^2 \prod_{k\ne j} (\lambda_k-t).
\]
 The left hand side is $0$ or has degree $n$ in $t$; whereas the right hand
 side is either $0$ or has degree $n-1$ in $t$. The conclusion is that both
 sides are $0$ and thus $a_j=0$ and $e=f$.   Here we have used in a very
  strong way that the matrices $E$ and $F$ are each self-adjoint
 (in which case unitary equivalence is the same as similarity).
 \end{proof}

\begin{lemma}
 \label{lem:irrconsequence}
  Suppose $K\subset \smatg$ is free set and $n$ is a positive integer.
If $X\in K(n)$ is an absolute extreme point of $K$ and  if $\al$ is a g-tuple from $\C^n$, $\be\in \R^g$ and
\[
  Y = \begin{pmatrix} X & \al \\ \al^* & \be \end{pmatrix}
  \in K(n+1)
\]
 then $\al=0$.

\end{lemma}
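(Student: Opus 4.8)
The plan is to leverage the definition of absolute extreme point together with the non-interlacing Lemma \ref{lem:interlace}. Since $X \in K(n)$ is an absolute extreme point and $Y = \begin{pmatrix} X & \al \\ \al^* & \be \end{pmatrix} \in K(n+1)$ is a weakly proper matrix convex combination representation of $Y$ (as in \eqref{eq:SumY}, realized via the isometries $\begin{pmatrix} I_n \\ 0 \end{pmatrix}$ and a suitable complement), we may apply the defining property of absolute extreme points to the tuple $Y \in K(n+1)$. There are two cases: either $Y \usim X$, which is impossible since $Y$ has size $n+1 > n$; or there exists $Z \in K$ with $Y \usim X \oplus Z$. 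Since $Y$ has size $n+1$ and $X$ has size $n$, the tuple $Z$ must have size $1$, i.e., $Z \in \R^g$ is a scalar tuple. Thus $Y \usim X \oplus f$ for some $f \in \R^g$.

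The next step is to bring in irreducibility and reduce to the scalar interlacing lemma one coordinate at a time. Since $X$ is an absolute extreme point, it is irreducible by Lemma \ref{lem:irreduc}, but in fact I would not even need irreducibility here — I would instead diagonalize. Fix an index $j \in \{1, \dots, g\}$ and diagonalize $X_j$ by a unitary $U$ on $\C^n$, so that $U^* X_j U = D$ is diagonal. Conjugating $Y$ by $U \oplus 1$ (allowed since $K$ is a free set, hence closed under unitary similarity) replaces $X_j$ by $D$ while keeping the block structure. The $j$-th coordinate of the conjugated $Y$ then has exactly the form $E = \begin{pmatrix} D & a \\ a^* & e \end{pmatrix}$ appearing in Lemma \ref{lem:interlace}, and the $j$-th coordinate of $X \oplus f$ has the form $\begin{pmatrix} D & 0 \\ 0 & f_j \end{pmatrix}$. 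The unitary equivalence $Y \usim X \oplus f$ restricts, coordinatewise, to a unitary equivalence between these two $(n+1) \times (n+1)$ self-adjoint matrices — here one must be slightly careful: the unitary implementing $Y \usim X \oplus f$ is a single unitary $W$ working simultaneously for all $g$ coordinates, so in particular $W^* (U^* \oplus 1)(Y_j)(U \oplus 1) W = \begin{pmatrix} D & 0 \\ 0 & f_j \end{pmatrix}$, which is precisely the hypothesis of Lemma \ref{lem:interlace} for that coordinate. Lemma \ref{lem:interlace} then forces the off-diagonal block of the $j$-th coordinate to vanish, i.e., $a = 0$, which is exactly the $j$-th component $\al_j$ of $\al$ (after the harmless conjugation by $U$, which does not affect whether $\al_j = 0$). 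Running this over all $j$ gives $\al = 0$.

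The main obstacle — really the only subtle point — is the bookkeeping around the single simultaneous unitary $W$ implementing $Y \usim X \oplus f$: one needs that this $W$, restricted to each coordinate, produces the exact matrix pair to which Lemma \ref{lem:interlace} applies, and that conjugating $X$ (and hence $Y$) by $U \oplus 1$ beforehand is legitimate and does not destroy the conclusion $\al = 0$ (it replaces $\al$ by $U^* \al$, which is zero iff $\al$ is zero). Everything else is a direct invocation of the definitions and of Lemmas \ref{lem:irreduc} and \ref{lem:interlace}. I expect the written proof to be short.
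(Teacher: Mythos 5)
Your proposal is correct and follows essentially the same route as the paper: express $X=V^*YV$ via the inclusion isometry, invoke the definition of absolute extreme point to force $Y\usim X\oplus f$ with $f$ a scalar $g$-tuple, and then apply Lemma \ref{lem:interlace} coordinatewise. Your extra care about first diagonalizing each $X_j$ (so that Lemma \ref{lem:interlace} literally applies) is a detail the paper leaves implicit, and your observation that conjugation by $U\oplus 1$ replaces $\al_j$ by $U^*\al_j$ without affecting whether it vanishes correctly closes that small gap.
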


\begin{proof}
 Letting $V:\C^n\to\C^{n+1}=\C^n\oplus \C$ denote the isometry $Vx= x\oplus 0$, observe $X=V^*YV$. Using the
 definition of absolute extreme point, it follows that there is a $\ga\in \C$ such that
\[
 Y\usim Z= \begin{pmatrix} X&0\\0 &\ga\end{pmatrix}.
\]
By Lemma \ref{lem:interlace}, $\al=0$ and the proof is complete.
\end{proof}

\subsection{Proof of Theorem \ref{thm:wearekleski}}

For the proof of the forward direction of  Theorem {\rm\ref{thm:wearekleski}},
suppose $X$ is an absolute extreme point.
Lemmas \ref{lem:irreduc} and 
 \ref{lem:irrconsequence} together with Lemma \ref{lem:scalar enough for arv} show $X$ is irreducible and an Arveson boundary point respectively.

 The proof of the reverse direction of Theorem {\rm\ref{thm:wearekleski}} uses the following lemma.

\begin{lemma}
 \label{lem:irreduciblegives}
Fix positive integer $n$ and $m$  and suppose  $C$ is a nonzero $m\times n$ matrix,  the tuple $X\in\smatng$  is irreducible  and $E\in \smatmg$.
If  $CX_i = E_i C$ for each $i,$  then $C^*C$
 is a nonzero multiple of
 the identity. Moreover, the range of
 $C$ reduces the set $\{E_1,\dots,E_g\}$ so that, up to unitary
 equivalence,  $E= X\oplus Z$ for some $Z\in \mathbb S_k^g,$ where $k=m-n$.
\end{lemma}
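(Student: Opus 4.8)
The plan is to exploit irreducibility of $X$ via a Schur-lemma type argument. First I would observe that the intertwining relations $CX_i = E_iC$ for all $i$ imply, upon taking adjoints (recall the $X_i$ and $E_i$ are self-adjoint), that $X_i C^* = C^* E_i$ for all $i$. Composing gives $C^*C\, X_i = C^* E_i C = X_i\, C^*C$ for each $i$, so the positive semidefinite matrix $C^*C$ commutes with every $X_i$. Since $X$ is irreducible, the only self-adjoint matrices commuting with all $X_i$ are scalar multiples of $I_n$; hence $C^*C = c\, I_n$ for some $c \ge 0$, and $c \ne 0$ because $C \ne 0$. Rescaling, we may assume $c = 1$, so $C$ is an isometry from $\C^n$ into $\C^m$.

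Next I would analyze $CC^*$, which is then a projection of rank $n$ onto the range $\cH$ of $C$. From $CX_i = E_i C$ and $X_i C^* = C^* E_i$ we get $CC^* E_i = C X_i C^* = E_i C C^*$, so the projection $CC^*$ commutes with each $E_i$; equivalently $\cH$ is a reducing subspace for $\{E_1,\dots,E_g\}$. Writing $\C^m = \cH \oplus \cH^\perp$ and letting $k = m - n$, the tuple $E$ decomposes (up to unitary equivalence implemented by a unitary carrying $\C^n \oplus \C^k$ onto $\cH \oplus \cH^\perp$) as $E \usim (C^* E C) \oplus Z$ for some $Z \in \mathbb S_k^g$, where $C$ is now viewed as a unitary from $\C^n$ onto $\cH$. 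Finally, $C^* E_i C = C^* C X_i = X_i$ using $CX_i = E_i C$ and $C^*C = I_n$, so $C^* E C = X$ and therefore $E \usim X \oplus Z$, as claimed.

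The bookkeeping with the partial isometry/unitary identifications is the only place requiring care, but it is routine once $C^*C = I_n$ is established; the genuine content is the Schur-lemma step, which is immediate from irreducibility. I do not anticipate a real obstacle here — the statement is essentially a packaging of the classical fact that an intertwiner between an irreducible representation and another representation is, after normalization, an isometry onto a reducing subspace on which the second representation restricts to a copy of the first.
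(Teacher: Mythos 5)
Your proof is correct and follows essentially the same route as the paper: take adjoints to get $X_iC^*=C^*E_i$, deduce $C^*CX_i=C^*E_iC=X_iC^*C$, invoke irreducibility to conclude $C^*C$ is a nonzero scalar, and then show the range of $C$ reduces $E$ with the restriction unitarily equivalent to $X$. The only cosmetic difference is that you verify the reducing property via $CC^*E_i=E_iCC^*$, whereas the paper notes the range is invariant and uses self-adjointness of the $E_j$; these are the same observation.
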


\begin{proof}
To prove this statement note that
\[
   X_j C^* = C^* E_j.
\]
 It follows that
\[
 X_j C^*C = C^* E_j C = C^* C X_j.
\]
 Since $\{X_1,\dots,X_g\}$ is irreducible, $C^*C$ is a nonzero multiple
 of the identity and therefore  $C$ is a multiple of an isometry.
 Further, since $CX=EC$, the range of $C$ is invariant for $E$.  Since each $E_j$
 is self-adjoint, the range of $C$ reduces each $E_j$ and $C$, as a mapping
 into its range is a multiple of a unitary.
\end{proof}

 To complete the proof of the reverse direction,  suppose $X$ is both irreducible and in the Arveson boundary of $K$.  To
  prove $X$ is an absolute extreme point, suppose
\[
   X= \sum_{j=1}^\nu C_j^* E^j C_j,
\]
 where each $C_j$ is nonzero, $\sum_{j=1}^\nu C_j^* C_j =I$ and $E_j\in K$. In this case, let
\[
   C= \begin{pmatrix} C_1 \\ \vdots \\ C_\nu \end{pmatrix}
\quad\text{ and }\quad
  E=
E^1\oplus E^2\oplus\cdots\oplus E^\nu
\]
 and observe that $C$ is an isometry and $X=C^*EC$.
   Hence, as $X$ is in the Arveson boundary,
  $CX=EC$. It follows that $C_j X_k = E^j_k C_j$ for each $j$ and $k$.  Thus, by Lemma \ref{lem:irreduciblegives},
  it follows that each $E_j$ is unitarily equivalent to $X\oplus Z^j$
  for some $Z^j\in K$.
Thus $X$ is an 
absolute
extreme point.
\hfill\qedsymbol

\section{Matrix Extreme Points and the \BarvB}\label{sec:block}

In this section we turn our attention to
 matrix extreme points and the \barvb.
 Theorem \ref{thm:0arv} is the main result of this section. It
 gives an Arveson-type dilation characterization
 of matrix extreme points.

 We say the tuple
 $X \in K(n)$
 is in  the \df{\barvb} or is a \df{block-Arveson boundary point} of a matrix convex set $K$ if
 whenever
 $$  \begin{pmatrix} X & \al \\ \al^* & \be \end{pmatrix} \in K $$
 is $n$-block diagonalizable, $\al  =0$.

\begin{thm}\label{thm:0arv}
  Let $K$ be a  matrix convex set.
  A point $X$ is a matrix extreme point of $K$ if and only
  if $X$ is both irreducible and in the \barvb\ of $K$.
\end{thm}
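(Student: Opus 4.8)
The plan is to move between the block‑diagonal dilation picture and the matrix‑convex‑combination picture. First note that a matrix convex set $K$ is a free set, and in fact fully free: isometric conjugation includes restriction to reducing subspaces, so $E^j\in K$ whenever $\bigoplus_jE^j\in K$, and $\bigoplus_jY^j$ together with every unitary conjugate of it lies in $K$ whenever each $Y^j$ does. That $X$ is irreducible is Lemma~\ref{lem:irreduc} in the forward direction and a hypothesis in the reverse. For the reverse direction, assume $X$ is irreducible and in the \barvb, and let $X=\sum_{j=1}^\nu V_j^*Y^jV_j$ be a proper matrix convex combination with $Y^j\in K(n_j)$. Stack the $V_j$ into an isometry $V=\operatorname{col}(V_1,\dots,V_\nu)$ (so $V^*V=I$), set $Y=\bigoplus_jY^j\in K$, so $X=V^*YV$, and complete $V$ to a unitary $\mathcal U=(V\mid V')$. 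Then $\mathcal U^*Y\mathcal U=\bem X&\al\\ \al^*&\be\eem\in K$ is $n$-block diagonalizable, its blocks being the $Y^j$ of sizes $n_j\le n$ (each $V_j$ is surjective). The \barvb\ hypothesis forces $\al=0$, i.e. $\operatorname{ran}V$ reduces $Y$, so $YV=VX$ and hence $Y^jV_j=V_jX$ for every $j$. Since $V_j\ne0$ and $X$ is irreducible, Lemma~\ref{lem:irreduciblegives} gives that $V_j$ is a scalar multiple of an isometry and $Y^j\usim X\oplus Z^j$; surjectivity of $V_j$ then forces $n_j=n$ and $Z^j$ empty, so $Y^j\usim X$. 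Thus $X$ is matrix extreme.

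For the forward direction, assume $X$ is matrix extreme; it is irreducible by Lemma~\ref{lem:irreduc}, and it remains to see it lies in the \barvb. Let $Z=\bem X&\al\\ \al^*&\be\eem\in K(n+m)$ be $n$-block diagonalizable, say $Z\usim E:=\bigoplus_{j=1}^kE^j$ with $E^j\in K(n_j)$ and $n_j\le n$. Transporting $X$, the compression of $Z$ to its first $n$ coordinates, through the implementing unitary writes $X=W^*EW=\sum_jW_j^*E^jW_j$ for an isometry $W=\operatorname{col}(W_1,\dots,W_k)$, and one checks $\al=0$ holds precisely when $\operatorname{ran}W$ reduces $E$. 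Dropping the indices with $W_j=0$ and factoring each surviving $W_j$ through the closure of its range presents $X$ as a proper matrix convex combination of elements of $K$; since $X$ is matrix extreme, each surviving block has size $n$, is unitarily equivalent to $X$, and the corresponding $W_j$ is invertible. A direct computation shows $\operatorname{ran}W$ reduces $E$ exactly when the tuples $W_j^{-1}E^jW_j$ all coincide. Writing $E^j=U_j^*XU_j$ and $R_j=U_jW_j$, this says $R_j^{-1}XR_j$ is independent of $j$; equivalently the unital completely positive map $\Phi(Y)=\sum_jR_j^*YR_j$ --- which satisfies $\Phi(X_i)=X_i$ for all $i$, since $\sum_jR_j^*R_j=I$ and $X=\sum_jR_j^*XR_j$ --- is the identity on $M_n$.

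The heart of the matter, and the step I expect to be the main obstacle, is this last claim: a unital completely positive map on $M_n$ fixing an irreducible $g$-tuple (equivalently, a set that generates $M_n$ as a $*$-algebra, by Burnside) must be the identity. I would prove it by forming the mean‑ergodic projection $P=\lim_N\tfrac1N\sum_{\ell=0}^{N-1}\Phi^\ell$ (which exists as $\|\Phi\|=\|\Phi(I)\|=1$), a unital completely positive idempotent onto the fixed‑point space $\mathcal F=\{Y:\Phi(Y)=Y\}$. By the Choi--Effros theorem $\mathcal F$ is a C$^*$-algebra under $a\circ b:=P(ab)$, and the bimodule identity $P(aY)=a\circ P(Y)$ for $a\in\mathcal F$ yields $P(X_{i_1}\cdots X_{i_\ell})=X_{i_1}\circ\cdots\circ X_{i_\ell}$; since the $X_i$ generate $M_n$, bilinearity makes $P\colon M_n\to(\mathcal F,\circ)$ a unital $*$-homomorphism, necessarily injective as $M_n$ is simple, so $\mathcal F=M_n$ and $P=\operatorname{id}$, forcing $\Phi=\operatorname{id}$. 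Consequently each $R_j$ is a scalar multiple of $I$, so $R_j^{-1}XR_j=X$ for every $j$, $\operatorname{ran}W$ reduces $E$, and $\al=0$, as required. (Equivalently, one may invoke the rigidity of matrix extreme points --- in any proper matrix convex representation of such a point by points of $K$ the summands are unitary conjugates of $X$ and the coefficients are scalar multiples of unitaries --- from which $\al=0$ is immediate; but that rigidity is itself essentially the statement just proved.)
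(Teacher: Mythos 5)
Your proof is correct and its skeleton coincides with the paper's: the reverse direction (stack the $V_j$ into an isometry, extend to a unitary, invoke the \barvb\ hypothesis to get $YV=VX$, then apply Lemma \ref{lem:irreduciblegives} and Lemma \ref{lem:behave}) is the argument given in the paper essentially verbatim, and the forward direction follows the same reduction: block-diagonalize the dilation, use matrix extremality to replace every surviving block by a unitary conjugate of $X$, and then show the resulting Kraus operators of the induced unital completely positive map are scalars, which forces the off-diagonal block to vanish. The one place you genuinely diverge is in how you establish that last rigidity step, which is the paper's Proposition \ref{lem:hasagap}: the paper proves it by citing Arveson's Boundary Theorem (a UCP map on $M_n$ fixing an irreducible generating tuple is the identity) together with the unitary freedom in Choi--Kraus representations, and separately offers an alternate proof via free spectrahedra and the Gleichstellensatz. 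You instead prove the fixed-point claim from scratch using the mean-ergodic projection onto the fixed-point space and the Choi--Effros product; this is a clean, self-contained substitute for the citation and is the standard finite-dimensional proof of the fact being cited, so nothing is lost. Two small points worth tightening: (i) the passage from $\Phi=\operatorname{id}$ to ``each $R_j$ is a scalar multiple of $I$'' still uses the uniqueness of Choi--Kraus representations (or, more simply, the observation that $\sum_j R_j^* uu^* R_j = uu^*$ forces each $R_j^*$ to preserve every line); you should say so rather than write ``consequently''. (ii) The phrase ``equivalently the map $\Phi$ is the identity'' overstates an equivalence --- what you need, and what you in fact prove, is only the implication from $\Phi=\operatorname{id}$ to the coincidence of the conjugates $R_j^{-1}XR_j$.
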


\subsection{Matrix extreme points}
\label{ssec:matextpts}

We  now
recall the definition of matrix extreme points
and the corresponding  Krein-Milman theorem of Webster-Winkler \cite{WW99}.

 The matrix convex combination \eqref{eq:SumY}
is \df{proper} provided each $V_\ell$ is surjective.  Note that in this case, $n\ge n_\ell$ for each $\ell$. A tuple  $X \in K(n)$  is a \df{matrix extreme point}
  of the matrix convex set $K$
 if whenever it is represented as a proper matrix combination of the form
\eqref{eq:SumY} with $Y^\ell \in K(n_\ell)$,
then $n=n_\ell$ and  $X \usim  Y_\ell$ for each $\ell$. (Recall: we use $A\usim B$ to denote $A$ and $B$ are unitarily equivalent.)
The set of all matrix extreme points will be denoted by $\cEm K $\index{$\cEm K$}.
Webster-Winkler give in \cite[Theorem 4.3]{WW99} a Krein-Milman theorem in the matrix convex setting. (See also \cite{F00} for an alternate proof and the elucidating discussion of the connection between matrix extreme points of $K$ and matrix states on an operator system.)

\begin{theorem}[Webster-Winkler \protect{\cite[Theorem 4.3]{WW99}}]
 \label{thm:ww}
If $K\subset\smatg$ be a compact matrix convex set,
then $\cEm K $  is non-empty, and
$K$ is the smallest closed matrix convex set
containing $\cEm K $, i.e.,
 \[ \cmco  \cEm K =K.\]
\end{theorem}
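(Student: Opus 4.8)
\medskip
\noindent\textbf{Proof proposal.}
Write $L := \cmco\cEm K$. Because $\cEm K \subseteq K$ and $K$ is closed and matrix convex, the inclusion $L \subseteq K$ is automatic, so the theorem splits into showing (i) $\cEm K \neq \emptyset$ and (ii) $K \subseteq L$; both follow the classical Krein--Milman template, with (ii) carrying the subtlety peculiar to the matricial setting. For (i) the plan is to descend to the first level. If $K(n)\neq\emptyset$ for some $n$, then compressing a tuple in $K(n)$ by an isometry $\C\to\C^n$ onto a unit vector produces a point of $K(1)$, so $K(1)$ is a nonempty compact convex subset of $\R^g$ and has a Euclidean extreme point $v_0$ by ordinary Krein--Milman; moreover $v_0 \in \cEm K$, because in any proper matrix convex combination $v_0 = \sum_{i=1}^N V_i^* Y^i V_i$ surjectivity of the $V_i$ from $\C$ forces each $n_i = 1$ and each $V_i$ to be a nonzero scalar, so $v_0 = \sum_i |V_i|^2 Y^i$ is an ordinary convex combination in $K(1)$ and Euclidean extremality gives $Y^i = v_0$ for all $i$. (In fact $\cEm K\neq\emptyset$ would also follow a posteriori from (ii).)

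For (ii), the natural first attempt mimics the scalar argument. Suppose $X_0 \in K(n_0)\setminus L$. After an affine translation of $\R^g$, which respects matrix convexity, matrix extreme points and closed matrix convex hulls, we may assume $0 \in \cEm K \subseteq L$, so that the Effros--Winkler matricial Hahn--Banach theorem \cite{EW97} exhibits $L$ as an intersection of free spectrahedra and hence yields a monic linear pencil $L_A$ with $L_A(Y)\succeq 0$ for all $Y \in L$, in particular for all $Y\in\cEm K$, while $L_A(X_0)$ has a strictly negative eigenvalue. Put $\mu := \inf\{\lambda_{\min}(L_A(Y)) : Y \in K\}$, the infimum over all levels; boundedness of $K$ gives $\mu > -\infty$ and $\mu \le \lambda_{\min}(L_A(X_0)) < 0$. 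One would like to imitate the classical proof by passing to the ``face'' $F := \{Y \in K : \lambda_{\min}(L_A(Y)) = \mu\}$ and locating a matrix extreme point there; but this $F$ is \emph{not} matrix convex, since for a matrix convex combination $Y = \sum V_i^* Z^i V_i$ with $Y\in F$ and $Z^i\in K$ a $\mu$-eigenvector $\eta$ of $L_A(Y)$ need not satisfy $(I\otimes V_i)\eta\neq 0$, so membership $Z^i\in F$ cannot be read off. This is the genuine obstacle at which the matricial theory departs from the scalar one.

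The way I would actually complete (ii) is to forgo separation and decompose $X_0$ directly, following Webster--Winkler \cite[Theorem 4.3]{WW99} (or Farenick \cite{F00}). Realize $K$, via their representation theorem, as the matrix state space of the operator system of continuous matrix affine functions on $K$, so that matrix extreme points of $K$ correspond to matrix extreme matrix states. Starting from $X_0$, one repeatedly replaces any summand that is not matrix extreme by a proper matrix convex combination of pieces of strictly smaller matrix size; weak-$*$ compactness of the matrix state space then allows one to pass to a limiting matrix convex decomposition of $X_0$ all of whose pieces are matrix extreme, which puts $X_0$ in $\cmco\cEm K$ and contradicts $X_0\notin L$. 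A more dilation-theoretic route is available here through Theorem \ref{thm:0arv}: a point of $K$ that is not matrix extreme is either reducible or, lying outside the block-Arveson boundary, admits a nontrivial dilation inside $K$ through a tuple that is still block diagonalizable with blocks no larger than itself; one seeks a maximal such dilation of $X_0$, whose irreducible blocks are then matrix extreme, and compresses back via Proposition \ref{prop:hull-abstract}.

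In either route the main obstacle is precisely this compactness/maximality step. A naive ascending chain of finite-size decompositions has only an infinite-dimensional ``limit'', which does not live in $\smatg$, so the passage to a limit must be carried out in a genuinely compact ambient --- the weak-$*$ compact matrix state space --- and one must verify both that matrix extremality survives the limit and that the pieces of the limiting decomposition are matrix extreme points of $K$ itself rather than of some auxiliary face. Everything else is the routine bookkeeping of matrix convex combinations.
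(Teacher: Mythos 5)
You should first note a mismatch of expectations: the paper does not prove Theorem \ref{thm:ww} at all. It is imported verbatim from Webster--Winkler \cite[Theorem 4.3]{WW99} (with \cite{F00} mentioned as an alternative proof) and is then used as a black box, e.g.\ in Corollary \ref{cor:WWbarv}. So there is no internal argument to compare yours against; judged as a standalone proof, your proposal has a genuine gap, while judged as a reduction to the literature it does essentially what the paper does, namely cite \cite{WW99}.

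What you actually establish is the easy inclusion $\cmco\cEm K\subseteq K$ and the nonemptiness of $\cEm K$; the level-one observation is correct, since properness of $V_\ell:\C\to\C^{n_\ell}$ forces $n_\ell=1$, so Euclidean extreme points of $K(1)$ are matrix extreme. The substantive direction $K\subseteq\cmco\cEm K$ is, however, only sketched. After correctly diagnosing why the classical separation-plus-face argument fails matricially, you propose to follow \cite{WW99}: represent $K$ as a matrix state space, iteratively refine decompositions of a putative $X_0\notin\cmco\cEm K$, and pass to a weak-$*$ limit. But that limiting step is precisely the content of the theorem, and your outline does not justify it: you do not show that a limit of refinements exists as a matrix convex decomposition, nor that matrix extremality survives the limit, and you flag this yourself. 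The dilation-theoretic alternative via Theorem \ref{thm:0arv} does not close the gap either: that theorem characterizes matrix extreme points but provides no existence or abundance statement; a ``maximal'' $n$-block-diagonalizable dilation need not be attained at any finite matrix size (the same chain/limit problem in different clothing); and compressing back via Proposition \ref{prop:hull-abstract} only helps once the blocks are already known to be matrix extreme. So either the appeal to \cite{WW99} (or to Farenick's argument via pure UCP maps in \cite{F00}) should be made explicitly as a citation --- which is exactly the paper's stance --- or the weak-$*$ compactness argument must actually be carried out; as written, the heart of the theorem is assumed rather than proved.
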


\begin{cor}\label{cor:WWbarv}
 Let $K$ be a compact matrix convex set.
 Then $K$ is the smallest closed matrix
 convex set containing the \barvb\ of $K$.
 \end{cor}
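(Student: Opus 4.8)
The plan is to deduce Corollary \ref{cor:WWbarv} from Theorem \ref{thm:0arv} (matrix extreme $=$ irreducible $+$ block-Arveson boundary) together with the Webster--Winkler Krein--Milman theorem, Theorem \ref{thm:ww}. The key observation is that, by Theorem \ref{thm:0arv}, every matrix extreme point of $K$ is in particular a block-Arveson boundary point of $K$; hence $\cEm K$ is contained in the \barvb\ of $K$, which we write $\partial^{\mathrm{block}}K$ for the purposes of this sketch.

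First I would record the two-sided inclusion of matrix convex hulls that this gives. From $\cEm K\subseteq \partial^{\mathrm{block}}K$ we get $\cmco\,\cEm K\subseteq \cmco\,\partial^{\mathrm{block}}K$. On the other hand, $\partial^{\mathrm{block}}K\subseteq K$ by definition (a block-Arveson boundary point is a point of $K$), and $K$ is closed matrix convex, so $\cmco\,\partial^{\mathrm{block}}K\subseteq K$. Combining these with Theorem \ref{thm:ww}, which says $\cmco\,\cEm K=K$, we obtain
\[
 K=\cmco\,\cEm K\subseteq \cmco\,\partial^{\mathrm{block}}K\subseteq K,
\]
so all three coincide and $\cmco\,\partial^{\mathrm{block}}K=K$. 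This says exactly that $K$ is the smallest closed matrix convex set containing its \barvb.

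There is essentially no obstacle here: the entire content is the implication ``matrix extreme $\Rightarrow$ block-Arveson boundary'' furnished by Theorem \ref{thm:0arv}, after which the statement is a formal squeeze between $\cEm K$ and $K$ using that $K$ is compact (so Theorem \ref{thm:ww} applies) and closed matrix convex. The only minor point worth stating carefully is that one should invoke the \emph{direction} of Theorem \ref{thm:0arv} that does not require irreducibility as a hypothesis on the candidate points, i.e. one uses that a matrix extreme point \emph{is} a block-Arveson boundary point, not the converse. I would write the proof in three or four lines along exactly the displayed chain of inclusions above, citing Theorem \ref{thm:0arv} for the first inclusion, the definition of \barvb\ together with $K$ being closed matrix convex for the second, and Theorem \ref{thm:ww} for the outer equality.
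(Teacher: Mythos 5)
Your argument is correct and is exactly the paper's proof (the paper simply says ``combine Theorem \ref{thm:0arv} with Theorem \ref{thm:ww}''), with the chain of inclusions spelled out explicitly. Nothing is missing.
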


\begin{proof}
Simply combine Theorem \ref{thm:0arv} with Theorem \ref{thm:ww}.
\end{proof}

\begin{remark}\rm
 \label{rem:proper}
 Suppose the matrix convex combination \eqref{eq:SumY} is not necessarily proper. Let $P_\ell$ denote the inclusion of the range of $Y^\ell$ into $\C^{n_\ell}$  and let $Z^\ell = P_\ell^* Y^\ell P_\ell.$ Likewise, let $W_\ell = P_\ell^* V_\ell$.  With these choices,
\[
 X= \sum_{\ell} W_\ell^* Z^\ell W_\ell
\]
 is a proper matrix convex combination. Moreover, the set $K$ is closed with respect to isometric similarity and if each $Y^\ell\in K(n_\ell)$, then this sum is a matrix convex combination from $K$.
\end{remark}

The following lemma gives a convenient alternate definition of matrix extreme point in the setting of $\smatg$.

\begin{lemma}
 \label{lem:behave}
   Suppose $K$ is a matrix convex set, $n$ is a positive integer and $X\in K(n)$. The point $X$  is a matrix extreme point of $K$ if and only if whenever $X$ is represented as a convex combination as in equation \eqref{eq:SumY} with  $n_\ell\le n$ and $Y^\ell \in K(n_\ell)$ for each $\ell$, then, for each $\ell,$ either $V_\ell=0$ or $n_\ell=n$ and $Y^\ell \usim X$.
\end{lemma}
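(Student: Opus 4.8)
The plan is to prove the equivalence by reducing an arbitrary matrix convex representation of $X$ to a proper one, apply the definition of matrix extreme point, and then handle the zero-block and size bookkeeping. For the forward direction, suppose $X$ is a matrix extreme point and $X = \sum_\ell V_\ell^* Y^\ell V_\ell$ with $\sum_\ell V_\ell^* V_\ell = I_n$, $n_\ell \le n$ and $Y^\ell \in K(n_\ell)$. First I would discard the indices $\ell$ with $V_\ell = 0$; the remaining relation is still of the form \eqref{eq:SumY}. Then, following Remark \ref{rem:proper}, I would compress each surviving $Y^\ell$ to the range of $V_\ell$: writing $P_\ell$ for the inclusion of $\operatorname{ran} V_\ell$ into $\C^{n_\ell}$ and $W_\ell = P_\ell^* V_\ell$ (now surjective), set $Z^\ell = P_\ell^* Y^\ell P_\ell$. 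Since $K$ is matrix convex (hence closed under isometric conjugation), $Z^\ell \in K$, and $X = \sum_\ell W_\ell^* Z^\ell W_\ell$ is a \emph{proper} matrix convex combination. By the definition of matrix extreme point, $Z^\ell \usim X$ for every surviving $\ell$, so in particular $\dim \operatorname{ran} V_\ell = n$ for each such $\ell$. Combined with $n_\ell \le n$ this forces $\operatorname{ran} V_\ell = \C^{n_\ell}$, i.e. $V_\ell$ is onto, hence $n_\ell = n$ and $V_\ell$ is unitary; then $Y^\ell = Z^\ell \usim X$. This gives the stated dichotomy.

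For the reverse direction, assume the condition in the lemma and suppose $X = \sum_\ell V_\ell^* Y^\ell V_\ell$ is a \emph{proper} matrix convex combination with $Y^\ell \in K(n_\ell)$. Properness means each $V_\ell$ is surjective, so in particular $V_\ell \ne 0$ and $n_\ell \le n$. The hypothesis then applies directly and yields, for each $\ell$, that $n_\ell = n$ and $Y^\ell \usim X$. That is precisely the definition of $X$ being a matrix extreme point, so we are done.

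I do not expect any serious obstacle here; the content is essentially the observation recorded in Remark \ref{rem:proper} together with a dimension count. The one point that needs a little care is extracting ``$V_\ell$ surjective'' from ``$Z^\ell \usim X$'': one must note that $Z^\ell$ acts on $\operatorname{ran} V_\ell$, so unitary equivalence with the $n\times n$ tuple $X$ forces $\dim \operatorname{ran} V_\ell = n$, and only then, using the standing inequality $n_\ell \le n$ in the lemma's hypothesis, does one conclude $V_\ell$ maps onto all of $\C^{n_\ell}$. A minor subtlety is that the definition of matrix extreme point is phrased for proper combinations where a priori $n \ge n_\ell$; the reduction via compressions produces exactly such a combination, so invoking it is legitimate. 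Everything else — closure of $K$ under isometric conjugation, direct sums being irrelevant here — is immediate from the definitions already in place.
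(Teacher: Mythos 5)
Your proof is correct and follows essentially the same route as the paper's: discard the zero $V_\ell$, compress via Remark \ref{rem:proper} to obtain a proper combination, invoke the definition of matrix extreme point, and then run the dimension count $\dim\operatorname{ran}V_\ell=n\le n_\ell\le n$ to recover surjectivity and $Y^\ell=Z^\ell\usim X$; the converse is the same immediate observation as in the paper. One small slip: a surjective $V_\ell:\C^n\to\C^{n_\ell}$ with $n_\ell=n$ need not be unitary (only $\sum_\ell V_\ell^*V_\ell=I$ holds, not $V_\ell^*V_\ell=I$ individually), but this claim is never used, since $Y^\ell=Z^\ell$ already follows from surjectivity making the compression $P_\ell$ the identity.
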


\begin{proof}
 First suppose $X$ is a matrix extreme point of $K$ and is represented as in equation \eqref{eq:SumY} with  $n_\ell\le n$ and $Y^\ell \in K(n_\ell)$ for each $\ell$. In the notation of Remark \ref{rem:proper}, $Z^\ell \in K$ by matrix convexity and each $W_\ell$ is proper (or $0$). Let $L=\{\ell : W_\ell\ne 0\}$.  Thus,
\[
 X= \sum_{\ell\in L} W_\ell^* Z^\ell W_\ell
\]
 is a proper convex combination. Since $X$ is matrix extreme $Z^\ell \usim X$ and therefore the range of $W_\ell$ (equal the range of $V_\ell$) is $n$. Since $n_\ell\le n$, it follows that for each $\ell$ either $V_\ell =0$ or $V_\ell$ is proper and  $Y^\ell \usim X$.

 To  prove the converse fix $X\in K(n)$ and suppose whenever  $X$ is represented in as a matrix convex combination as in  equation \eqref{eq:SumY} with  $n_\ell\le n$ and $Y^\ell \in K(n_\ell)$, then either   $V^\ell=0$ or $n_\ell =n$ and $Y^\ell\usim X$.  To prove $X$ is a matrix extreme point of $K$, suppose  $X$ is represented as a matrix convex combination as in equation \eqref{eq:SumY} and each $V_\ell$ is surjective. In particular, $n_\ell \le n$. Since $V_\ell \ne 0$, it follows that $Y^\ell\usim X$ for each $\ell$ and therefore $X$ is a matrix extreme point of $K$.
\end{proof}

\subsection{Points which are convex combinations of themselves}

Before turning to the proof of Theorem \ref{thm:0arv}, we present an intermediate result of independent interest.

\begin{prop}
 \label{lem:hasagap}
   Suppose $X$ is a tuple of size $N$ and $C_1,\dots,C_t$ are $N\times N$ matrices. If
 \begin{enumerate}[\rm (i)]
  \item
   \label{it:extremegap}  $X\in\smatng$ is irreducible;
  \item
   \label{it:extremeproxy}
   $ \displaystyle
   X = \sum_{\ell=1}^t  C_\ell^* X C_\ell$;
 \item  $\sum C_\ell^* C_\ell =I$,
\end{enumerate}
   then each $C_\ell$ is a scalar multiple of the identity.
\end{prop}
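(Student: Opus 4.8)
The plan is to recast the statement via unital completely positive (ucp) maps. Set $\Phi\colon M_N\to M_N$, $\Phi(T)=\sum_{\ell=1}^{t}C_\ell^{*}TC_\ell$; then hypothesis (iii) says $\Phi$ is unital, $\Phi$ is completely positive, and (ii) says $\Phi(X_j)=X_j$ for every $j$. Form the isometry $V=\operatorname{col}(C_1,\dots,C_t)\colon\C^N\to\C^{N}\otimes\C^{t}$ and the $*$-representation $\pi(T)=T\otimes I_t$, so that $\Phi(T)=V^{*}\pi(T)V$. A direct computation (using $V^*V=I$ and $V^*\pi(T)V=\Phi(T)$) shows that for self-adjoint $T$ with $\Phi(T)=T$,
\[
\|\pi(T)V\xi-VT\xi\|^{2}=\langle\xi,(\Phi(T^{2})-T^{2})\xi\rangle\qquad(\xi\in\C^N),
\]
so the vector $\pi(T)V\xi-VT\xi$ vanishes for all $\xi$ exactly when $\Phi(T^{2})=T^{2}$; together with Kadison--Schwarz $\Phi(T^{2})\succeq\Phi(T)^{2}=T^{2}$, this is the familiar statement that such a $T$ lies in the multiplicative domain of $\Phi$ iff $\Phi(T^2)=T^2$.

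I claim the conclusion is equivalent to $\Phi=\operatorname{id}_{M_N}$. If each $C_\ell=\lambda_\ell I$ then $\Phi=\operatorname{id}$ since $\sum|\lambda_\ell|^{2}=1$. Conversely, if $\Phi(T)=T$ for all $T$, the displayed identity gives $\pi(T)V=VT$ for all $T$, so $\operatorname{ran}V$ is reducing for the von Neumann algebra $\pi(M_N)=M_N\otimes I_t$; as its commutant on $\C^N\otimes\C^t$ is $I_N\otimes M_t$, an $N$-dimensional reducing subspace has the form $\C^N\otimes\C v$, so $V\xi=(U_0\xi)\otimes v$ for a unitary $U_0\in M_N$, and feeding this back into $\pi(T)V=VT$ gives $U_0T=TU_0$ for all $T$, whence $U_0\in\C I$ and $C_\ell=v_\ell I$. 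Moreover, to obtain $\Phi=\operatorname{id}$ it suffices to show $\Phi(X_j^{2})=X_j^{2}$ for every $j$: then $\pi(X_j)V=VX_j$ for each $j$, so $\operatorname{ran}V$ is reducing for each $\pi(X_j)$ and hence for the algebra they generate, which is $\pi(M_N)$ because $X$ is irreducible (Schur / bicommutant); one then argues exactly as above, using $\pi(X_j)V=VX_j$ and irreducibility to force $U_0\in\C I$. (Equivalently: $\Phi(X_j^2)=X_j^2$ places each $X_j$ in the multiplicative domain of $\Phi$, a $C^{*}$-subalgebra; as the $X_j$ generate $M_N$ this subalgebra is all of $M_N$, so $\Phi$ is a unital $*$-endomorphism of the simple algebra $M_N$, hence an inner automorphism $T\mapsto U^{*}TU$, and $\Phi(X_j)=X_j$ forces $U\in\C I$.)

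It remains to prove $\Phi(X_j^{2})=X_j^{2}$. Put $Q=\sum_{j}X_j^{2}$ and $\Delta=\Phi(Q)-Q$; Kadison--Schwarz gives $\Delta\succeq0$, and it suffices to show $\Delta=0$. Since $\Phi$ is a unital positive contraction, $\Phi^{k}(Q)$ is nondecreasing in the Loewner order and bounded above by $\|Q\|I$, hence convergent; telescoping $\Phi^{k}(Q)-Q=\sum_{m=0}^{k-1}\Phi^{m}(\Delta)$ shows $\sum_{m\ge0}\Phi^{m}(\Delta)$ converges, so $\Phi^{m}(\Delta)\to0$. Now $\ker\Delta=\bigcap_{j}\ker(\Phi(X_j^{2})-X_j^{2})$. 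If $\ker\Delta$ is invariant under each $X_j$, then by irreducibility it is $\{0\}$ or $\C^N$; if $\Delta\neq0$ it cannot be $\C^N$, so $\Delta\succ0$, whence $\Phi^{m}(\Delta)\succeq\lambda_{\min}(\Delta)\,\Phi^{m}(I)=\lambda_{\min}(\Delta)I$, contradicting $\Phi^{m}(\Delta)\to0$; hence $\Delta=0$. As a partial step toward the invariance, Cauchy--Schwarz for $\Phi$ on a pair $X_i,X_j$ gives $\begin{pmatrix}\Phi(X_i^{2})-X_i^{2}&\Phi(X_iX_j)-X_iX_j\\ \Phi(X_jX_i)-X_jX_i&\Phi(X_j^{2})-X_j^{2}\end{pmatrix}\succeq0$, whose upper-left kernel sits inside the kernel of the off-diagonal entry, so $\Phi(X_iX_j)\xi=X_iX_j\xi$ whenever $\xi\in\ker\Delta$; this is the degree-two instance of invariance.

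The step I expect to be the main obstacle is precisely this invariance of $\ker\Delta$ under $X$---equivalently, upgrading ``$\Phi$ fixes the irreducible tuple $X$'' to ``$\Phi$ is multiplicative on the $*$-algebra generated by $X$'', equivalently $\Phi=\operatorname{id}$. The block inequalities peel off one word-length at a time but do not close on their own, so a genuinely new input is needed here; the natural candidates are (a) to replace $\Phi$ by the ucp idempotent obtained from its Cesàro averages $\tfrac1k\sum_{m<k}\Phi^{m}$ and exploit the Choi--Effros structure of its range, or (b) to produce a faithful $\Phi$-invariant state---whose existence one would again deduce from irreducibility---and invoke the fact that the fixed-point space of a ucp map preserving a faithful state is a $*$-subalgebra. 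All the remaining ingredients---Stinespring, Kadison--Schwarz, monotone convergence of $\Phi^{k}(Q)$, Schur's lemma---are routine, and it is only at this one step that irreducibility must be used in an essential way.
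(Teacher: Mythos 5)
Your reduction is correct as far as it goes: the Stinespring/Choi--Kraus setup, the identity $\|\pi(X_j)V\xi - VX_j\xi\|^2 = \langle(\Phi(X_j^2)-X_j^2)\xi,\xi\rangle$, the commutant computation showing that $\Phi=\operatorname{id}$ forces each $C_\ell$ to be scalar, and the convergence $\Phi^m(\Delta)\to 0$ are all sound. But the proof is not complete, and you say so yourself: you never establish $\Phi(X_j^2)=X_j^2$ (equivalently, that the fixed-point space of $\Phi$ is a subalgebra, equivalently $\Phi=\operatorname{id}$), and your block-matrix Schwarz inequalities only propagate information one word-length at a time without closing. That missing step is precisely the content of Arveson's Boundary Theorem, and it is exactly what the paper invokes: the proof in the text cites \cite[Boundary Theorem 2.1.1]{Ar72} to conclude that a unital completely positive map of $M_N(\C)$ fixing an irreducible generating tuple is the identity, and then finishes with the unitary freedom in Choi--Kraus representations \cite[Theorem 8.2]{NC10} --- essentially the same commutant argument you give. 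So what you have is a correct reformulation of the proposition together with an honest acknowledgment that its core is missing; as a proof it has a genuine gap.

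If you want to avoid the black box, note that the paper supplies a second, self-contained route (Lemmas \ref{lem:pregapmat} and \ref{lem:contain}): from $X=\sum_\ell C_\ell^* X C_\ell$ one shows each top eigenspace $\Gamma_S$ of $\Lambda_X(S)$ is invariant under every $C_\ell\otimes I$, decomposes $X$ along the eigenspaces of a fixed $C_\ell$, and compares the resulting free spectrahedra; minimality of an irreducible defining tuple (the Gleichstellensatz \cite[Theorem 3.12]{HKM13}) then forces $C_\ell$ to have a single eigenspace, i.e.\ to be a multiple of the identity. Either of your suggested repairs (Ces\`aro averaging to a ucp idempotent plus the Choi--Effros product, or producing a faithful invariant state) can be pushed through in finite dimensions, but each requires an argument of depth comparable to the step you are missing, so neither is a free lunch.
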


\begin{proof}
Consider the completely positive map $\phi:M_N(\C)\to M_N(\C)$ given by
\[A\mapsto \sum_{j=1}^t C_j^* AC_j.
\]
Since $X_j$ are fixed points of $\phi$ and they generate $M_N(\C)$ by
irreducibility,
Arveson's \cite[Boundary Theorem 2.1.1]{Ar72} (cf.~\cite[Remark 2, p.~288]{Ar72}) implies that $\phi$ is the identity map.
The Unitary freedom in a Choi-Kraus representation (see e.g.~\cite[Theorem 8.2]{NC10}) now concludes the proof.
\end{proof}

We next give an alternative proof of
Proposition \ref{lem:hasagap}
based on the theory of linear matrix inequalities
and free spectrahedra.

\begin{lemma}
 \label{lem:pregapmat}
    Let $X\in\mathbb S_N^g$ be  given.
   For $S\in \mathbb S_N^g$, let
    $\Gamma_S$ denote the eigenspace
   corresponding to the largest eigenvalue $\lambda_S$ of $\Lambda_X(S)$. \index{$\lambda_S$}
   
 If $X$ is irreducible and if $C$ is an $N\times N$  matrix such that
  $(C\otimes I_N) \Gamma_S\subset\Gamma_S$ for all $S\in \mathbb S_N^g$,  then  $C$
   is a multiple of the identity.
\end{lemma}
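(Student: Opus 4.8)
The plan is to reduce this to Proposition \ref{lem:hasagap} by producing, out of the eigenspace-invariance hypothesis, a genuine identity exhibiting $X$ as a convex combination of copies of itself conjugated by $C$ (and its "complement"). The natural device is a spectrahedral one: the largest eigenvalue $\lambda_S$ of $\Lambda_X(S)$ scales linearly, so after normalization the relevant condition $\lambda_S \le 1$ for all $S$ in a suitable cross-section describes the free spectrahedron $\cD_X$ dual-style, and the eigenspace $\Gamma_S$ is exactly the kernel of $I - \Lambda_X(S)/\lambda_S$ at the boundary. First I would set up the dilation $Z = \begin{pmatrix} X & \alpha \\ \alpha^* & \beta\end{pmatrix}$ built from $C$ in the style of equation \eqref{eq:Z}: the hypothesis $(C\otimes I_N)\Gamma_S \subset \Gamma_S$ for every $S$ is precisely a kernel-containment condition of the form $\ker L_A(X) \subset \ker L_A(W)$ appearing in Corollary \ref{cor:RG+}\eqref{it:kercond}, so it forces $W$ — the tuple assembled from $C$ — to lie in the same spectrahedron and share the same kernel structure as $X$.

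The key steps, in order, are: (1) translate "$(C\otimes I_N)\Gamma_S\subset\Gamma_S$ for all $S$" into the statement that for every $S$, every vector in the top eigenspace of $\Lambda_X(S)$ is also fixed by the compression induced by $C$; equivalently $C^*\Lambda_X(S)C$ and $\Lambda_X(S)$ agree on $\Gamma_S$ up to the eigenvalue $\lambda_S$. (2) Use the fact that, as $S$ ranges over $\mathbb S_N^g$, the spaces $\Gamma_S$ "fill out" enough of $\C^N\otimes\C^N$ — indeed by irreducibility of $X$ the only operators commuting with all $\Lambda_X(S)$ are scalars, and the spectral projections onto $\Gamma_S$ generate, as $S$ varies, a set whose commutant is trivial. (3) Deduce an operator identity: averaging or taking a suitable combination of the relations from (1) over finitely many well-chosen $S$ yields $X = \sum_\ell C_\ell^* X C_\ell$ with $\sum_\ell C_\ell^* C_\ell = I$, where $C_1 = C/\|C\|$ (after rescaling so that $C^*C\preceq I$) and the remaining $C_\ell$ absorb the defect $I - C^*C$ together with the eigenvalue normalizations. (4) Apply Proposition \ref{lem:hasagap} to conclude each $C_\ell$, in particular $C$, is a scalar multiple of the identity.

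The main obstacle is step (2)–(3): extracting a clean finite convex-combination identity from an infinite family of eigenspace-invariance conditions. The eigenvalue $\lambda_S$ is only piecewise-smooth in $S$ and the dimension of $\Gamma_S$ jumps, so one cannot naively differentiate; instead I expect to argue by a compactness/irreducibility dichotomy — either $C$ is already scalar, or there is a nontrivial spectral subspace common to enough $\Lambda_X(S)$, contradicting irreducibility of $X$ via Arveson's boundary theorem as in the first proof of Proposition \ref{lem:hasagap}. An alternative, cleaner route avoiding the combinatorics: directly verify that the block tuple $W$ built from $C$ satisfies $\ker L_X(X)\subset \ker L_X(W)$ in the doubled-up sense, invoke Corollary \ref{cor:RG+} to see $X$ is forced into a rigid (Euclidean extreme) position, and then let Proposition \ref{lem:hasagap} finish. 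I would carry the spectrahedral version as the primary argument since the paper has already built that machinery, and relegate the eigenspace bookkeeping to a short computation.
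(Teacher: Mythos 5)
There is a genuine gap, and it sits exactly where you flag it: step (3). The hypothesis of Lemma \ref{lem:pregapmat} is only that $C\otimes I_N$ leaves each top eigenspace $\Gamma_S$ invariant; you propose to extract from this a Choi--Kraus identity $X=\sum_\ell C_\ell^* X C_\ell$ with $\sum_\ell C_\ell^*C_\ell=I$ and then invoke Proposition \ref{lem:hasagap}. But no mechanism is given for producing that identity, and the logical traffic in the paper runs the other way: in the alternate proof of Proposition \ref{lem:hasagap} one \emph{starts} from the identity, deduces the invariance of each $\Gamma_S$ under $C_\ell\otimes I$, and then applies Lemma \ref{lem:pregapmat}. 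So your reduction both reverses the implication that is actually available and, if it were carried out by appealing to the Arveson-boundary-theorem proof of the proposition, would defeat the purpose of the lemma (which exists precisely to give an LMI-based proof avoiding that machinery). The fallback routes you sketch --- a ``compactness/irreducibility dichotomy,'' or assembling a block tuple $W$ from $C$ and invoking Corollary \ref{cor:RG+} --- are not developed to the point where one can check them, and the claim in step (2) that the spectral projections onto the $\Gamma_S$ have trivial commutant is itself unproved and would not by itself yield the finite convex-combination identity.

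For comparison, the paper's proof proceeds quite differently: it picks, inside each invariant $\Gamma_S$, an eigenvector of $C\otimes I$, so that each $\Gamma_S$ meets some $\Delta_j\otimes\C^N$ where $\Delta_j$ are the eigenspaces of $C$; it then compresses $X$ to the $\Delta_j$, forms $Y=\oplus_j V_j^*XV_j$, shows $\cD_X(N)=\cD_Y(N)$ by matching boundaries (Lemma \ref{lem:contain}), upgrades this to $\cD_X=\cD_Y$, and applies the Gleichstellensatz together with irreducibility of $X$ to force $C$ to have a single eigenspace $\Delta_1=\C^N$. If you want to salvage your approach, you would need either to supply the missing construction of the Kraus decomposition from the invariance hypothesis (which I do not believe is possible in general) or to switch to an argument of the paper's type in which the eigenspaces of $C$, rather than an averaging identity, carry the weight.
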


\begin{lemma}
\label{lem:contain}
 Suppose $X,Y\in \smatg$ and $n\in\mathbb N$.   If $\cD_X(n)\subset \cD_Y(n)$ and $\partial \cD_X(n)\subset \partial \cD_Y(n)$, then $\cD_X(n)=\cD_Y(n)$. 

 If $N\ge M,$ $X\in \mathbb S_N^g$  and $Y\in\mathbb S_M^g$ and if $\cD_X(N)=\cD_Y(N)$, then $\cD_X=\cD_Y$.
\end{lemma}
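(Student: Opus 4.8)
The plan is to prove the two assertions separately, as they are essentially independent. For the first, assume $\cD_X(n)\subseteq\cD_Y(n)$ and $\partial\cD_X(n)\subseteq\partial\cD_Y(n)$, and suppose toward a contradiction that the inclusion of the $n$-th levels is proper. Then there is a point $W\in\cD_Y(n)\setminus\cD_X(n)$. Both $\cD_X(n)$ and $\cD_Y(n)$ are convex and contain $0$ in their interior (we may reduce to the case $A$ is a minimal defining tuple, so $0$ is interior; the degenerate cases where $\cD_X(n)$ is lower-dimensional can be handled by a routine separate argument, or absorbed by working in the affine span). Since $\cD_X(n)$ is closed with nonempty interior and $W\notin\cD_X(n)$, the segment from an interior point $W_0$ of $\cD_X(n)$ to $W$ exits $\cD_X(n)$ at a boundary point $P=tW_0+(1-t)W\in\partial\cD_X(n)$ for some $t\in(0,1)$, and $P$ lies in the relative interior of the segment, hence in the interior of $\cD_Y(n)$ (because $W_0,W\in\cD_Y(n)$ and $\cD_Y(n)$ is convex with $W$ a point we can perturb). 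But $P\in\partial\cD_X(n)\subseteq\partial\cD_Y(n)$, contradicting $P\in\inter\cD_Y(n)$. Hence $\cD_X(n)=\cD_Y(n)$.

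For the second assertion, assume $N\ge M$, $X\in\smatng$ with $N$ replaced by the size, $Y\in\mathbb S_M^g$, and $\cD_X(N)=\cD_Y(N)$; the goal is $\cD_X=\cD_Y$, i.e., $\cD_X(k)=\cD_Y(k)$ for every $k$. The key point is that a free spectrahedron defined by a tuple of size $N$ is determined at all levels by its level $N$: more precisely, I would invoke the fact (essentially the ``free" analogue of the bipolar/defining-tuple theorems, cf.\ \cite{HKM13,EW97} and the discussion of minimal defining tuples in the excerpt) that for $A\in\smatng$, the free spectrahedron $\cD_A$ satisfies $\cD_A(k)=\cD_B(k)$ for all $k$ as soon as $\cD_A(N)=\cD_B(N)$, provided $\max(\text{size of }A,\text{size of }B)\le N$. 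Since $X$ has size $N$ and $Y$ has size $M\le N$, the hypothesis $\cD_X(N)=\cD_Y(N)$ puts us exactly in this situation, and the conclusion $\cD_X=\cD_Y$ follows.

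The cleanest way to establish the level-$N$-determines-everything statement, which I expect to be the main obstacle, is via the monic-pencil evaluation trick: for a tuple $A$ of size $d\le N$ and any $Z\in\mathbb S_k^g$, one has $L_A(Z)\succeq 0$ if and only if $L_A$ is positive on the ``compression slice" one obtains by testing against vectors; concretely, $Z\in\cD_A(k)$ iff for every isometry $V\colon\C^d\to\C^k$ (equivalently, every $d$-dimensional subspace) the appropriate compression lies in $\cD_A(d)$, together with the observation that membership of a $k$-tuple in $\cD_A$ is detected by $d$-dimensional data because $L_A(Z)=I_d\otimes I_k-\sum A_j\otimes Z_j$ acts on $\C^d\otimes\C^k$ and its positivity is equivalent to positivity on all vectors of the form $\sum e_i\otimes v_i$, which in turn is governed by the $N$-dimensional (since $d\le N$) compressions $V^*ZV$. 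Thus $\cD_A(N)$ determines $\cD_A(d)$ for all $d\le N$ and, running the test in the other direction, determines $\cD_A(k)$ for all $k$. Applying this to both $X$ and $Y$ and using $\cD_X(N)=\cD_Y(N)$ yields $\cD_X=\cD_Y$. If this self-contained argument proves unwieldy, the alternative is simply to cite the Gleichstellensatz machinery of \cite{HKM13} already referenced in the paper, from which the statement is immediate.
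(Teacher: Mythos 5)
Your proposal is correct and follows essentially the same route as the paper: the first assertion is the same convexity argument (the paper simply scales from the interior point $0$, which every free spectrahedron contains, rather than using a segment from a general interior point, so your worry about degenerate or lower-dimensional cases is moot), and the second assertion is the same compression trick, namely that positivity of $L_Y(S)$ is tested on vectors $\sum e_j\otimes\gamma_j$ whose second components span a subspace of dimension at most $M\le N$, reducing membership at every level to membership at level $N$.
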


\begin{proof}
 Suppose $T\in \cD_Y(n)\setminus \cD_X(n)$. Since $\cD_X(n)$ is  convex and contains $0$ in its interior, there is a $0<t<1$ such that $tT\in \partial \cD_X(n)$. Hence $tT\in \partial \cD_Y(n)$, a contradiction (because $\cD_Y(n)$ is also convex and contains $0$ in its interior).

 Now suppose $N\ge M,$ $X\in\mathbb S_N^g$  and $Y\in\mathbb S_M^g$ and  $\cD_X(N)=\cD_Y(N)$.  First observe, if $m\le N$, then $\cD_X(m)=\cD_Y(m)$. Now  let $n\in\N$, $S\in \cD_X(n)$ and a vector $\gamma \in \C^M\otimes \C^n$ be given. Write
\[
 \gamma =\sum_{j=1}^M e_j \otimes \gamma_j \in \C^M\otimes \C^n.
\]
 Let $\cW$ denote the span of $\{\gamma_j:1\le j\le M\}$ and let $W:\cW\to \C^n$ denote the inclusion. Finally, let $T=W^*SW$.  It follows that the size $m$  of $T$ is at most $N$ and $T\in \cD_X(m)$. Hence $T\in \cD_Y(m)$ and therefore,
\[
 0\le  \langle (I-\Lambda_Y(T))\gamma,\gamma\rangle = \langle (I-\Lambda_Y(S))\gamma,\gamma\rangle .
\]
 So $S\in \cD_Y(n)$.  Thus $\cD_X\subset \cD_Y$ and thus, by symmetry, $\cD_X=\cD_Y$.
\end{proof}

\begin{proof}[Proof of Lemma \ref{lem:pregapmat}]
    Since $\Gamma_S$  is invariant under $C\otimes I$, there is a an eigenvector  $\gamma\in\Gamma_S$
   for $C\otimes I.$ (Such eigenvectors exist and we note
  that this is the only place where complex, as opposed to real,  scalars are used.) In particular, letting $\Delta_j$ denote the eigenspaces for $C$,  for each $S$ in  $\mathbb S_N^g$
  there is a $j$ such that $(\Delta_j\otimes \mathbb C^N) \cap \Gamma_S\ne (0)$.

Let $V_j:\Delta_j\to \C^N$ denote the inclusion and let $Y^j = V_j^* X V_j$. Let $Y=\oplus_k Y^j$ (the orthogonal direct sum, even though the subspaces $\Delta_j$ might not be orthogonal).  In particular, $Y\in\mathbb S_M^g$ for some $M\le N$.  Suppose $S\in \cD_X(N)$. It follows that $S\in \cD_{Y^j}$ for each $j$ and hence $X\in \cD_Y(N)$; i.e., $\cD_X(N)\subset \cD_Y(N)$.  If $S\in \partial \cD_X(N),$ then  there is a $j$ and vector $0\ne \gamma_j\in \Delta_j\otimes \C^N$ such that $(I-\Lambda_X(S))\gamma =0$. Let $\gamma^\prime$ denote the corresponding vector in $\oplus_k \Delta_k$; i.e., $0\ne \gamma^\prime = \oplus_k\gamma_k^\prime$ where $\gamma^\prime_k=0$ for $k\ne j$ and $\gamma^\prime_j=\gamma\ne 0$. It follows that
\[
 (I-\Lambda_Y(S))\gamma^\prime = \oplus_k (I-\Lambda_{Y^k}(S))\gamma_k^\prime = (I-\Lambda_{Y^j}(S))\gamma_j^\prime = (I-\Lambda_X(S))\gamma =0.
\]
Hence, $(I-\Lambda_{Y}(S))\gamma^\prime=0$ and therefore $S\in \partial \cD_Y(N)$. Another application of Lemma \ref{lem:contain} now implies $\cD_X(N)=\cD_Y(N)$.  Lemma \ref{lem:contain} implies $\cD_X=\cD_Y$.   Assuming $X$ is irreducible, it is a minimal defining tuple, cf.~\cite[Proposition 3.17 or Corollary 3.18]{HKM13} (see also \cite{Zal+}). On the other hand, the size of $Y$ is at most that of $X$  and thus, by  the Gleichstellensatz
\cite[Theorem 3.12]{HKM13}, they (have the same size and) are unitarily equivalent. Since $X$ is irreducible so is $Y.$  Thus, as $Y=\oplus_k Y^k$, we conclude that there is only one summand, say $Y^1$. Moreover, $C$ has only  one eigenspace $\Delta_1=\C^N$, so it is a multiple of the identity. 
\end{proof}

\begin{proof}[Alternate Proof of Proposition \ref{lem:hasagap}]
  Let  $S\in\mathbb S_N^g$ be given
  and fix $\gamma\in\Gamma_S$. As before, let $\lambda_S$ denote the largest eigenvalue of $\Lambda_X(S)$ and let $\Gamma_S$ denote the corresponding eigenspace.   Note that
\[
  \lambda_S - \Lambda_X(S) = \sum_{\ell=1}^t  (C_\ell^* \otimes I) \big(\lambda_S - \Lambda_X(S)\big) (C_\ell\otimes I).
\]
  Observe   $\lambda_S-\Lambda_X(S)\succeq 0$ and  $(\lambda_S-\Lambda_X(S))\gamma=0$
  together  imply that for each $1\le \ell \le t $,
\[
  (C_\ell\otimes I)\gamma \in\Gamma_S.
\]
Thus $(C_\ell \otimes I_N) \Gamma_S\subset \Gamma_S$ so that
  the subspaces $\Gamma_S$ are all invariant under each $C_\ell \otimes I_N$.
  By Lemma \ref{lem:pregapmat}  each $C_\ell$ is a multiple of the identity.
\end{proof}

\subsection{Proof of Theorem \ref{thm:0arv}}
We are finally ready to prove Theorem \ref{thm:0arv}.
We isolate each of its two implications in a separate lemma.

\begin{lemma}
 \label{lem:extremeimplies0}
   Let $X$ be a matrix tuple of size $N$. If $X$ is matrix extreme in the  matrix convex set $K$,
  then $X$ is in the \barvb\ of $K$.
\end{lemma}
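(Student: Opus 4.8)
The plan is to extract from an $N$-block diagonalization of a dilation a matrix convex decomposition of $X$ into pieces of size at most $N$, and then use matrix extremality together with the rigidity in Proposition \ref{lem:hasagap} to force the dilation to be trivial. In detail, fix a $g$-tuple $\al$ of $N\times m$ matrices and $\be\in\mathbb S_m^g$ such that $Z=\begin{pmatrix} X & \al\\ \al^* & \be\end{pmatrix}\in K(N+m)$ is $N$-block diagonalizable, say $Z\usim E^1\oplus\cdots\oplus E^k$ with $E^j\in\mathbb S_{n_j}^g$ and $n_j\le N$. Since $K$ is matrix convex it is closed under unitary similarity and isometric conjugation, so $E^1\oplus\cdots\oplus E^k\in K(N+m)$ and each $E^j\in K(n_j)$. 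Writing $V:\C^N\to\C^{N+m}$ for the inclusion $Vx=x\oplus 0$, we have $X=V^*ZV$, and the goal is to prove $ZV=VX$, which is equivalent to $\al=0$.

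First I would transport the compression to the block-diagonal model: choose a unitary $W$ with $WZW^*=E^1\oplus\cdots\oplus E^k$, set $T=WV:\C^N\to\bigoplus_j\C^{n_j}$ (an isometry) and write $T=\begin{pmatrix}T_1\\ \vdots\\ T_k\end{pmatrix}$ with $T_j:\C^N\to\C^{n_j}$. Then
\[
  X=\sum_{j=1}^k T_j^*E^jT_j,\qquad \sum_{j=1}^k T_j^*T_j=I_N,
\]
a matrix convex combination from $K$ with all block sizes $n_j\le N$. Applying the alternate description of matrix extreme points in Lemma \ref{lem:behave}, for each $j$ either $T_j=0$ or $n_j=N$ and $E^j\usim X$.

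Next, for each $j$ in $L=\{j:T_j\neq 0\}$ pick a unitary $U_j$ with $E^j=U_j^*XU_j$ and set $C_j=U_jT_j$, an $N\times N$ matrix. The identities above become $X=\sum_{j\in L}C_j^*XC_j$ and $\sum_{j\in L}C_j^*C_j=I_N$. Since a matrix extreme point is irreducible (Lemma \ref{lem:irreduc}), Proposition \ref{lem:hasagap} applies, so each $C_j$ is a scalar multiple of $I_N$ and in particular commutes with $X$. Hence $E^jT_j=U_j^*XU_jT_j=U_j^*XC_j=U_j^*C_jX=T_jX$ for $j\in L$, while $E^jT_j=0=T_jX$ for $j\notin L$. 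Therefore $(E^1\oplus\cdots\oplus E^k)T=TX$, which after conjugating back by $W$ reads $ZV=VX$; thus $\al=0$.

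I expect the crux to be the passage from ``$X$ is a matrix convex combination of copies of itself'' to the conclusion that the coefficients are scalars --- precisely Proposition \ref{lem:hasagap} (equivalently, the uniqueness up to unitaries of the Choi--Kraus representation of the identity map). Everything else is bookkeeping, the only mild care points being that the summands $E^j$ genuinely lie in $K$ at level $n_j$, and that Lemma \ref{lem:behave} is applicable here because $N$-block diagonalizability guarantees all block sizes are $\le N$.
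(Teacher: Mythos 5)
Your proposal is correct and follows essentially the same route as the paper's proof: pass to the block-diagonal model $E=\oplus_j E^j$, read off the matrix convex combination $X=\sum_j T_j^*E^jT_j$ from the first block column of the unitary, apply Lemma \ref{lem:behave} to force $E^j\usim X$ on the nonzero blocks, and then invoke Proposition \ref{lem:hasagap} (with irreducibility from Lemma \ref{lem:irreduc}) to make the coefficients scalar. The only cosmetic difference is at the end, where you derive the intertwining $ZV=VX$ while the paper computes the off-diagonal block $\al_k=\sum_\ell U_{\ell,1}^*X_kU_{\ell,2}=X_k\sum_\ell U_{\ell,1}^*U_{\ell,2}=0$ directly; these are the same computation.
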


\begin{proof}
 Suppose
\[
  Z=\begin{pmatrix} X & \alpha \\ \alpha^* & \beta \end{pmatrix}\in K
\]
 is $N$-block diagonalizable.   Hence there exists a $t$
  and tuples $E^1,\dots,E^t$ in $K$ of size at most $N$
 and a unitary $U$ such that $Z=U^* E U$, where
\begin{equation}
 \label{eq:ZE}
    E=\oplus_{\ell=1}^t  E^\ell \in K.
\end{equation}
  Letting $C_1,\dots,C_t$ denote the block entries of
  the first column of $U$ with respect to the direct sum decomposition of $E$,
  it  follows that
\[
  X= \sum C_\ell^* E^\ell C_\ell
 \]
  and of course $\sum C_\ell^* C_\ell =I$.
  By Lemma \ref{lem:behave}, since $X$ is matrix extreme and the size of $E^\ell$ is at most $N$,  for each $\ell$ either
  $C_\ell=0$ or $E^\ell$ is unitarily equivalent to $X$; without loss of
  generality we may assume that either $E^\ell = X$ or $C_\ell =0$.
 Let $\mathcal J$ denote the set of indices $\ell$ for which $E^\ell =X$.
  In particular, without loss of generality,
\begin{equation}
 \label{eq:sumcec}
   X= \sum_{\ell} C_\ell^* X C_\ell.
\end{equation}
  From Proposition \ref{lem:hasagap}, each $C_\ell$ is a scalar multiple of the identity.

 To see that $\alpha=0$, write $U$ as a block $t\times 2$ matrix
  whose entries are compatible with $Z$ and the decomposition of $E$
 in  \eqref{eq:ZE}. In particular, $U_{\ell,1}=C_\ell$
  is a multiple of the identity.   Observe that
\[
 \begin{split}
  \alpha_k =  \sum_{\ell} U_{\ell,1}^* X U_{\ell,2}
   =  X \sum U_{\ell,1}^* U_{\ell,2}= 0,
 \end{split}
\]
  and thus  $\alpha=0$.
  Hence $X$ is in the \barvb\ of $K$.
\end{proof}

\begin{lemma}
The  matrix extreme points
 of a matrix convex set $K$ contain the irreducible points in its \barvb.
\end{lemma}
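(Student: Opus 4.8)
The goal is to show: if $X\in K(n)$ is irreducible and lies in the \barvb\ of $K$, then $X$ is a matrix extreme point of $K$. By Lemma \ref{lem:behave}, it suffices to take a matrix convex combination
\[
 X = \sum_{\ell=1}^N V_\ell^* Y^\ell V_\ell, \qquad \sum_{\ell=1}^N V_\ell^* V_\ell = I_n,
\]
with each $Y^\ell\in K(n_\ell)$ and $n_\ell\le n$, and prove that for each $\ell$ either $V_\ell=0$ or $n_\ell=n$ and $Y^\ell\usim X$. The natural first step is to assemble the pieces into a single dilation: set $Y=Y^1\oplus\cdots\oplus Y^N$ and $V=\operatorname{col}(V_1,\dots,V_N):\C^n\to\C^{n_1}\oplus\cdots\oplus\C^{n_N}$, so that $V$ is an isometry and $X=V^*YV$. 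Extend $V$ to a unitary $U=\begin{pmatrix} V & V'\end{pmatrix}$ on $\C^{n_1+\cdots+n_N}$ and form
\[
 Z = U^* Y U = \begin{pmatrix} X & \alpha \\ \alpha^* & \beta\end{pmatrix},
\]
which is unitarily equivalent to $Y$, a direct sum of tuples of size $\le n$, hence $Z$ is $n$-block diagonalizable and $Z\in K$.

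**Using the hypotheses.**
Since $X$ is in the \barvb\ of $K$, the defining property forces $\alpha=0$, i.e. $Z = X\oplus\beta$ and $Y\usim X\oplus\beta$. Equivalently, $V$ is a \emph{reducing} isometry for $Y$: $YV=VX$, which unpacks to $Y^\ell V_\ell = V_\ell X$ for every $\ell$. Now I would invoke Lemma \ref{lem:irreduciblegives} (with $C=V_\ell$, the roles of $X$ and $E$ swapped): for each $\ell$ with $V_\ell\ne 0$, irreducibility of $X$ gives that $V_\ell^*V_\ell$ is a nonzero scalar multiple of $I_n$, so $V_\ell$ is a nonzero multiple of an isometry; since $n_\ell\le n$ and $V_\ell:\C^n\to\C^{n_\ell}$ is (a scalar times) an isometry, necessarily $n_\ell=n$ and $V_\ell$ is (a scalar times) a unitary. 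Then $Y^\ell V_\ell=V_\ell X$ shows $Y^\ell\usim X$. This is exactly the conclusion required by Lemma \ref{lem:behave}, so $X$ is matrix extreme. (One should also record that $X$ irreducible is genuinely used, and is consistent with the statement.)

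**The main obstacle.**
The one subtlety I anticipate is the passage from "$Z$ $n$-block diagonalizable and in $K$" to applying the \barvb\ property: one must make sure the block-diagonalization hypothesis is met with blocks of size $\le n$, which is automatic here because each $Y^\ell$ has size $n_\ell\le n$ and $Z\usim\oplus_\ell Y^\ell$. After that, the argument is a clean combination of Lemma \ref{lem:irreduciblegives} and Lemma \ref{lem:behave}; there is no delicate estimate. A minor bookkeeping point is that in Lemma \ref{lem:behave}'s formulation the $V_\ell$ need not be surjective, but the block-Arveson conclusion $Y^\ell V_\ell=V_\ell X$ together with irreducibility of $X$ upgrades each nonzero $V_\ell$ to a scalar multiple of a unitary, which is cleaner than working with the general (possibly non-proper) combination directly. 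Thus the proof reduces to: (1) build the dilation $Z$; (2) apply the \barvb\ hypothesis to get $\alpha=0$; (3) read off $Y^\ell V_\ell=V_\ell X$; (4) apply Lemma \ref{lem:irreduciblegives}; (5) conclude via Lemma \ref{lem:behave}.
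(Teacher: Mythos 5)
Your proposal is correct and follows essentially the same route as the paper's proof: assemble the combination into the direct--sum dilation $Z\usim\oplus_\ell Y^\ell$, note it is $n$-block diagonalizable so the block-Arveson property yields $\alpha=0$ and hence $Y^\ell V_\ell=V_\ell X$, then apply Lemma \ref{lem:irreduciblegives} (irreducibility forces each nonzero $V_\ell$ to be a multiple of a unitary and $Y^\ell\usim X$ since $n_\ell\le n$) and finish with Lemma \ref{lem:behave}. The only cosmetic remark is that no swapping of the roles of $X$ and $E$ in Lemma \ref{lem:irreduciblegives} is actually needed, since $Y^\ell V_\ell=V_\ell X$ is already in the form $CX_i=E_iC$ with $C=V_\ell$ and $E=Y^\ell$.
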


\begin{proof}
  Suppose $X\in K(n)$ is both irreducible and in the \barvb\ of $K$.
 To
  prove $X$ is a matrix extreme point, suppose
\[
   X= \sum_{j=1}^t C_j^* E^j C_j
\]
 where each $C_j$ is nonzero, $\sum_{j=1}^t C_j^* C_j =I$ and $E^j \in K(n_j)$
  for some $n_j\le n.$
 In this case, let
\[
   C= \begin{pmatrix} C_1 \\ \vdots \\ C_t \end{pmatrix}
\qquad
 \text{and}
\qquad
  E=
E^1\oplus \cdots \oplus E^t
\]
 and observe that $C$ is an isometry and $X=C^*EC$. Since $X$ is in the \barvb\ of $K$, it
 follows that $CX=EC$.
  It follows that $C_\ell X_k = E^\ell_k C_\ell$ for each $\ell$ and $k$.
  With $\ell$ fixed, 
  an application of  Lemma \ref{lem:irreduciblegives}
    implies  $E^\ell$ is unitarily equivalent to $X\oplus Z^\ell$
  for some $Z^\ell\in K$. On the other hand, the size of
 $E^\ell$ is no larger than the size of $X$ and hence $Z^\ell=0$. Thus
  $E^\ell$ is unitarily equivalent to $X$.  An  application of  Lemma \ref{lem:behave}  completes the proof.
 \end{proof}

\section{Proof of Theorem \ref{thm:weak-kleski}}\label{sec:polArv}
 The converse part of Theorem \ref{thm:weak-kleski} appears  as \cite[Theorem 4.6]{HKMjems}. For the reader's convenience we provide the short proof here.
  Suppose $X\in (\mco\{\Omega\})^\circ$.  In particular, $L_\Omega(X)\succeq 0$ and hence $X\in \cD_{\Omega}$. Conversely, suppose $X\in \cD_{\Omega}$.  Let $Y\in \mco(\{\Omega\})$ be given. By  \eqref{eq:mco},  $Y= V^* (I_\mu \otimes \Omega)V$. Hence,
\begin{equation}
 \label{eq:shuffle police}
\begin{split}
 L_X(Y) & =  I - \sum_{j=1}^g X_j\otimes Y_j
  =  (I\otimes V)^* \left (I - \sum_{j=1}^g X_j \otimes [I_\mu\otimes \Omega_j]\right ) (I\otimes V)\\
  & \usim  (I\otimes V)^* (I_\mu\otimes L_X(\Omega))(I\otimes V). 
\end{split}
\end{equation}
 Since $L_X(\Omega)$ is unitarily equivalent to $L_\Omega(X)$ and $X\in \cD_\Omega$, it follows that $L_X(Y)\succeq 0$ and therefore
 $X\in (\mco\{\Omega\})^\circ$ and the proof of the converse part of Theorem \ref{thm:weak-kleski} is complete.

 Suppose  $K^\circ = \cD_\Omega$ for some $d$ and $g$-tuple $\Omega\in \smatdg$ and (without loss of generality)
 $\Omega$ is a minimal defining tuple for $\cD_\Omega$ and write $\Omega=\oplus_{j=1}^N \Omega^j$ where the $\Omega^j$ are irreducible and
 mutually not unitarily equivalent.   By the first part of the proof, $K^\circ =(\mco\{\Omega\})^\circ$ and by the bipolar theorem \cite{EW97} (see also \cite[\S 4.2]{HKMjems}), $K=\mco(\{\Omega\})$. (Here we have  used $0\in K$)
  Evidently $\Omega\in K$ and to  complete the proof it suffices to show $\Omega$ is an Arveson boundary point for $K$.   To prove this statement, suppose $\al\in (\C^d)^g,$  $\be\in \R^g$ and
\begin{equation}
\label{eq:OY}
  Y = \begin{pmatrix} \Om & \al \\ \al^* & \be \end{pmatrix} \in K(d+1)
\end{equation}
  Since $Y\in \mco(\{\Omega\})$, by equation \eqref{eq:mco}
\begin{equation}
 \label{eq:X=V*OV}
  Y = V^* (I_m\otimes \Om) V
\end{equation}
 for some $m$ and isometry $V$.  Equation \eqref{eq:OY} implies $\cD_{Y} \subset \cD_{\Om}$. On the other hand,
 Equation \eqref{eq:X=V*OV} implies $\cD_{Y} \supset \cD_{\Om}$. Hence
$\cD_{Y}=\cD_{\Om}$. Since also $\Omega$ is minimal defining,  the Gleichstellensatz
\cite[Corollary 3.18 or Theorem 3.12]{HKM13} (see also \cite{Zal+}) applies and
there is a unitary $U$ such that
\[
\begin{pmatrix} \Om & \al \\ \al^* & \be \end{pmatrix}
= U^* \bem \Om & 0 \\ 0 & \ga \eem U
\]
for some $\ga$.  In particular, for each $1\le j\le g$,
\[
 \begin{pmatrix} \Om_j & \al_j \\ \al_j^* & \be_j \end{pmatrix}
= U^* \bem \Om_j & 0 \\ 0 & \ga_j \eem U
\]
 By the eigenvalue interlacing result,
 Lemma \ref{lem:interlace},
 it follows   that $\al_j=0$ (and $\ga_j=\be_j$).  Hence, via an application of Lemma \ref{lem:scalar enough for arv}, $\Om$ is
  in the Arveson boundary of $K$.
\hfill\qedsymbol

\section{Free Simplices and Polar Duals of Free Spectrahedra}
\label{sec:FreeSimpAndDuals}

In this section we give a surprising use of
extreme points for
matrix convex sets. Namely,
we show that the polar dual of a free spectrahedron $\cD_A$
can only be
 a free spectrahedron if $\cD_A(1)$ is a polytope (Corollary \ref{cor:hardtobepolar}).
If $\cD_A(1)$ is a simplex, we show
that
$\cD_A^\circ$ is a free spectrahedron (cf.~Theorem \ref{thm:SimplexMain}).

\subsection{The polar dual of a free spectrahedron is seldom a free spectrahedron}
\label{sec:seldom}

In this subsection we use
the theory of
extreme points for
matrix convex sets to
show that if the polar dual of a free spectrahedron $\cD_A$ is a free spectrahedron then $\cD_A(1)$ is a polytope.

\begin{prop}\label{prop:polyhedron}
  Let $K=\cD_A$ be a free  spectrahedron {\CR and assume $\cD_A (2)=\overline{\cD_A (2)}$}. The Euclidean extreme points of $\cD_A(1)$ are
  Arveson boundary points of $\cD_A$.
\end{prop}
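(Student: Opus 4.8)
The plan is to show that a Euclidean extreme point $X$ of the scalar spectrahedron $\cD_A(1)$ must in fact be an Arveson boundary point of the whole free spectrahedron $\cD_A$. By Lemma \ref{lem:scalar enough for arv}, it suffices to check the Arveson condition against scalar tuples: given $\al$ a $g$-tuple of vectors in $\C$ (i.e., scalars) and $\be\in\R^g$ with $Z=\begin{pmatrix} X & \al\\ \al^* & \be\end{pmatrix}\in\cD_A(2)$, I must deduce $\al=0$. The key is that when $X$ is a scalar point, this dilation $Z$ is itself a point in $\cD_A(2)$, and I can translate the membership and kernel conditions into the Ramana--Goldman-type criterion of Corollary \ref{cor:RG+}.

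The main step is to compute $L_A(Z)$ in block form. Since $X\in\R^g$ is scalar, $L_A(Z)=I_{2d} - \sum_j A_j\otimes \begin{pmatrix} X_j & \al_j\\ \al_j^* & \be_j\end{pmatrix}$, which decomposes as a $2\times 2$ block matrix over $M_d$ with diagonal blocks $L_A(X)$ and $L_A(\be)$ and off-diagonal block $-\La_A(\al)$ (here $\al$ is treated as the real $g$-tuple of its scalar entries, which is legitimate since self-adjointness of $Z$ forces the $\al_j$ to be real). Positive semidefiniteness of this block matrix forces $\ker L_A(X)\subset \ker \La_A(\al)$: indeed, if $L_A(X)v=0$ then, testing the positive semidefinite form against $v\oplus 0$ and using a standard Schur-complement/range argument (the range of the off-diagonal block must lie in the range of the positive semidefinite diagonal block), one gets $\La_A(\al)v=0$. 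Now invoke the equivalence of items \eqref{it:euclid} and \eqref{it:rLE} in Corollary \ref{cor:RG+}: since $X$ is a Euclidean extreme point of $\cD_A(1)$ and $\al\in\R^g$ satisfies $\ker L_A(X)\subset\ker\La_A(\al)$, we conclude $\al=0$. This gives the Arveson boundary property.

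I expect the only real obstacle to be bookkeeping: making sure the reduction via Lemma \ref{lem:scalar enough for arv} is applied correctly, that the off-diagonal block of $L_A(Z)$ is genuinely $\La_A$ of a real tuple, and that the "range contained in range" implication from positive semidefiniteness of a $2\times 2$ block matrix is stated cleanly (this is exactly the observation already used at the start of the proof of Corollary \ref{cor:RG+}). None of these is deep; the content is entirely in recognizing that the scalar Ramana--Goldman criterion, repackaged as item \eqref{it:rLE} of Corollary \ref{cor:RG+}, is precisely what kills the off-diagonal block of an arbitrary dilation. An alternative, slightly slicker route: apply Proposition \ref{prop:Euclidean-geometric} with $\Ga=\cD_A$ and $n=1$ to see that $X$ being Euclidean extreme already forbids $2\times 2$ \emph{scalar-sized} dilations with nonzero corner, then combine with Lemma \ref{lem:scalar enough for arv} to upgrade to arbitrary $m$; I would present whichever of these comes out shortest, but the Corollary \ref{cor:RG+} version has the advantage of making the kernel condition explicit and is the one I would write up first.
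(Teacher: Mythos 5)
Your proposal is correct and follows essentially the same route as the paper: reduce to scalar-column dilations via Lemma \ref{lem:scalar enough for arv}, use positivity of $L_A(Z)$ to get $\ker L_A(X)\subset\ker\La_A(\al)$, and conclude $\al=0$ from the Ramana--Goldman criterion (Theorem \ref{thm:RG} / Corollary \ref{cor:RG+}). One tiny quibble: self-adjointness of $Z$ does \emph{not} force the entries $\al_j$ to be real (the off-diagonal entries are $\al_j$ and $\al_j^*$), but this is harmless since the kernel condition applies to the real and imaginary parts of $\al$ separately.
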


\begin{proof}
  Fix a Euclidean extreme point $x\in \cD_A(1)$. In this case $L_A(x) = I-\sum A_jx_j$ has maximal kernel by Corollary \ref{cor:RG+}.
  Suppose $a\in\C^g$, $b\in \R^g$ and
\[
  Y = \begin{pmatrix} x & a \\ a^* & b \end{pmatrix} \in \cD_A(2).
\]
 Using $L(Y)\succeq 0$, it follows that $\ker (\Lambda_A(a)) \supset \ker(L_A(x))$.
 Thus, by Theorem \ref{thm:RG}, $a=0$. By Lemma \ref{lem:scalar enough for arv}, $x$ is an Arveson boundary point of $\cD_A$.
\end{proof}

\begin{remark}
We point out that Proposition \ref{prop:polyhedron}
does not extend to general matrix convex sets.
Indeed, the polar dual $K$ of a free spectrahedron
$\cD_A$ has only finitely many (non-equivalent) Arveson boundary
points in each $K(m)$ by Theorem \ref{thm:weak-kleski}.
However, it is easy to construct examples where
 $K(1)$ has infinitely many
Euclidean extreme points.
 \end{remark}

\begin{cor}
 \label{cor:hardtobepolar}
 If the polar dual of $K=\cD_A$ is again a free spectrahedron{\CR and $\cD_A (2)=\overline{\cD_A(2)}$,} then $\cD_A(1)$ is
 a polyhedron. In particular, if $\cD_A(1)$ is a ball, then the polar of $\cD_A$
 is not a free spectrahedron.
\end{cor}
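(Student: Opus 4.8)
The plan is to combine Proposition \ref{prop:polyhedron} with Theorem \ref{thm:weak-kleski}, exploiting the finiteness of Arveson boundary points in a polar dual. First I would record the hypothesis: $K^\circ = \cD_B$ for some tuple $B$. Replacing $B$ by a minimal defining tuple (by the Gleichstellensatz \cite{HKM13}) and using $0\in\cD_B=K^\circ$ — which holds since $0$ lies in every free spectrahedron — we are in the situation of Theorem \ref{thm:weak-kleski} with the roles reversed: the set whose polar dual is a free spectrahedron is $\cD_A$ itself. Indeed, by the bipolar theorem \cite{EW97} we have $\cD_A = (\cD_A^\circ)^\circ = (\cD_B)^\circ = (K^\circ)^\circ$, and since $\cD_A$ is closed matrix convex containing $0$, Theorem \ref{thm:weak-kleski} applies to $\cD_A$ in place of $K$: because $\cD_A^\circ = \cD_B$ is a free spectrahedron, there are only finitely many (up to unitary equivalence) irreducible Arveson boundary points of $\cD_A$, namely the irreducible summands $B^1,\dots,B^N$ of $B$, and $\cD_A = \mco(\{B^1,\dots,B^N\})$.

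Next I would bring in Proposition \ref{prop:polyhedron}: every Euclidean extreme point $x\in\cD_A(1)$ is an Arveson boundary point of $\cD_A$. A scalar point $x\in\smatng[1]$ is automatically irreducible (it is a tuple of $1\times 1$ matrices), so each such $x$ is an irreducible Arveson boundary point of $\cD_A$, hence unitarily equivalent to one of the $B^j$. Two distinct scalar points are not unitarily equivalent (unitary conjugation fixes $1\times 1$ matrices), so the map $x\mapsto$ (its unitary class among the $B^j$) is injective on $\euc \cD_A(1)$. Therefore $\cD_A(1)$ has at most $N$ Euclidean extreme points — in particular, finitely many.

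Finally I would close the argument with classical convexity: $\cD_A(1)\subset\R^g$ is compact (it is closed by definition, and bounded because $L_A(X)\succeq 0$ forces boundedness — or, if $A$ is such that $\cD_A(1)$ is unbounded, one notes $\cD_A(1)$ is then a cylinder over a lower-dimensional bounded spectrahedron and the extreme-point count still forces a polyhedral face structure; alternatively one reduces to the bounded case by a change of variables) and convex, so by the Krein–Milman theorem it is the convex hull of its extreme points. A compact convex set in $\R^g$ with finitely many extreme points is a polytope. Hence $\cD_A(1)$ is a polyhedron. For the last sentence, if $\cD_A(1)$ is a ball then it has infinitely many Euclidean extreme points (its entire boundary sphere), contradicting what we just proved; therefore $\cD_A^\circ$ cannot be a free spectrahedron. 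The one point deserving care — and the main obstacle — is the compactness/boundedness bookkeeping needed to invoke Krein–Milman cleanly; the finiteness of the extreme-point set itself is the easy consequence of the two cited results.
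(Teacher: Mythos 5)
Your proposal is correct and follows essentially the same route as the paper: Theorem \ref{thm:weak-kleski} gives finitely many (inequivalent) irreducible Arveson boundary points, Proposition \ref{prop:polyhedron} plus the trivial irreducibility and inequivalence of distinct scalar points then bounds $\#\euc\cD_A(1)$, and Krein--Milman finishes. The compactness worry you flag as "the main obstacle" is in fact moot: under the hypothesis, Theorem \ref{thm:weak-kleski} yields $\cD_A=\mco(\{B\})$, and every element of $\mco(\{B\})$ satisfies $\|X_j\|\le\|B_j\|$, so $\cD_A(1)$ is automatically compact; note that your fallback for the unbounded case is not a valid argument as stated (an unbounded spectrahedron need not be a cylinder over a bounded one --- e.g.\ the second-order cone --- and a closed convex set with finitely many extreme points need not be polyhedral without compactness), so the boundedness observation is the right way to close that door.
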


\begin{proof}
Without loss of generality, $L_A$ is minimal. If $K^\circ=\cD_{B}$ is again a free spectrahedron (with $L_B$ minimal),
then $K$ has finitely many irreducible Arveson boundary points
by Theorem \ref{thm:weak-kleski}.
In particular, by Proposition \ref{prop:polyhedron},
$K(1)$ has finitely many boundary points and is thus a
polyhedron. For the final statement of the corollary,
use that $\cD_A^\circ(1)=\cD_A(1)^\circ$ by
\cite[Proposition 4.3]{HKMjems} since $\cD_A$
is matrix convex.
\end{proof}

\begin{remark}
If $\cC$ is the free cube, then its polar dual
is not a free spectrahedron \cite{HKM13}.
Indeed, its Arveson boundary points
are exactly tuples $J=(J_1,\ldots,J_g)$ of symmetries, i.e., $J_j^*=J_j$ and $J_j^2=I$.
If $\cC^\circ$ were a free spectrahedron, then
$\cC$ would contain only
 finitely many inequivalent irreducible points in its Arveson boundary. But, say for the case $n=2$ (square), each tuple
\[
( \begin{pmatrix} 1 & 0 \\ 0 & -1\end{pmatrix}, \begin{pmatrix} s &t\\ t& -s\end{pmatrix} )
\]
 with $|s|\ne 1$, $t>0$ and $s^2+t^2 =1$ gives such points (at level two).
\end{remark}

We next turn our attention to free simplices, where polar duals are
again free simplices and thus free spectrahedra.

\subsection{Free simplices}

The main result of this subsection is Theorem \ref{thm:SimplexMain}. It characterizes the absolute extreme points of a free simplex $\cD$ (i.e., a matrix convex set $\cD\subset\smatg$ whose scalar points $\cD(1)\subset\R^g$ form a simplex). These are exactly Euclidean extreme points of $\cD(1)$.\looseness=-1

A free spectrahedron $\cD \subset \smatg$ is a \df{free simplex} if it is bounded and  there exists a diagonal tuple $A\in \mathbb S_{g+1}^g$ such that $A$ is a minimal defining tuple for $\cD$. In particular,  $\cD=\cD_A$.

\begin{thm}
\label{thm:SimplexMain}
If $\cD_A$ be a free simplex, then $\free \cD_A=\free(\cD_A)(1)=\euc\cD_A(1)$. Furthermore $\cD_A=\comat (\free \cD_A)$.  Thus, a  point is in the Arveson boundary of $\cD_A$ if and only if it is, up to unitary equivalence,  a direct sum of points  from $\euc\cD_A(1)$. In particular, Arveson boundary points are commuting tuples.
\end{thm}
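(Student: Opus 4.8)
The plan is to prove Theorem~\ref{thm:SimplexMain} in three stages, working from the defining diagonal tuple outward to the general Arveson boundary point.

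First I would set up coordinates. Since $\cD_A$ is a free simplex, there is a diagonal minimal defining tuple $A=(A_1,\dots,A_g)\in\mS_{g+1}^g$, so $L_A(x)=I_{g+1}-\sum_j A_j x_j$ is, for scalar $x$, a diagonal $(g+1)\times(g+1)$ matrix whose $k$-th diagonal entry is an affine functional $\ell_k(x)=1-\sum_j (A_j)_{kk}x_j$. Thus $\cD_A(1)=\{x:\ell_k(x)\ge 0,\ k=1,\dots,g+1\}$, which is a (bounded, full-dimensional) simplex in $\R^g$ with exactly $g+1$ facets $\{\ell_k=0\}$. A Euclidean extreme point (vertex) $v$ of this simplex is the unique point where $g$ of the $\ell_k$ vanish; equivalently $\ker L_A(v)$ has dimension exactly $g$, and by Corollary~\ref{cor:RG+} this is precisely the condition that $v\in\euc\cD_A(1)$. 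The inclusion $\euc\cD_A(1)\subseteq\free\cD_A$ is then immediate from Proposition~\ref{prop:polyhedron} (the Euclidean extreme points of $\cD_A(1)$ are Arveson boundary points of $\cD_A$), and since a scalar point is trivially a point of $(\cD_A)(1)$, we get $\euc\cD_A(1)\subseteq\free(\cD_A)(1)\subseteq\free\cD_A$.

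The substantive step is the reverse inclusion $\free\cD_A\subseteq$ ``direct sums of vertices,'' i.e.\ showing every Arveson boundary point $X\in\cD_A(n)$ is, up to unitary equivalence, $\bigoplus v^{(i)}$ with each $v^{(i)}\in\euc\cD_A(1)$. Here is the mechanism I would use: if $X\in\arv\cD_A(n)$ then $L_A(X)\succeq 0$ and $X$ must be ``maximally degenerate.'' Concretely, $L_A(X)=\bigoplus_{k=1}^{g+1}\ell_k(X)$ where $\ell_k(X)=I_n-\sum_j(A_j)_{kk}X_j$ is a pencil in the single self-adjoint $n\times n$ tuple $X$; these $g+1$ affine expressions in $X_1,\dots,X_g$ span (affinely) the same $g$-dimensional space as the scalar functionals. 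I claim that being in the Arveson boundary forces $\sum_{k=1}^{g+1}\ker\ell_k(X)$ to be all of $\C^n$ after refining by a simultaneous eigenbasis; more precisely, one shows each $X_j$ must be a simultaneously diagonalizable (commuting) tuple whose joint eigenvalues lie in $\euc\cD_A(1)$. To get commutativity I would argue by contradiction using the coordinate-free/column form of the Arveson property together with Lemma~\ref{lem:interlace}: if $X$ were not a direct sum of scalar points, one could conjugate to expose an off-diagonal block and build a dilation $\begin{pmatrix}X&\alpha\\\alpha^*&\beta\end{pmatrix}\in\cD_A$ with $\alpha\ne0$, contradicting $X\in\arv\cD_A$. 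Diagonality of $A$ is what makes $L_A$ block-diagonal into scalar-type pencils, and this is exactly the feature that lets the interlacing lemma bite on each coordinate separately. Finally, once $X\usim\bigoplus v^{(i)}$ with $v^{(i)}\in\cD_A(1)$, the Arveson condition passed to each summand forces $v^{(i)}\in\arv\cD_A\cap(\cD_A)(1)=\euc\cD_A(1)$ by Corollary~\ref{cor:RG+}; conversely any such direct sum lies in $\arv\cD_A$ since $\arv\cD_A$ is closed under direct sums (Lemma~\ref{lem:boundaryisfree}) and contains $\euc\cD_A(1)$. This proves $\free\cD_A=\free(\cD_A)(1)=\euc\cD_A(1)$ as sets (the middle equality being the content: every matrix Arveson boundary point decomposes into scalar ones), gives the ``direct sum of points from $\euc\cD_A(1)$'' description, and shows Arveson boundary points are commuting tuples.

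For the remaining assertion $\cD_A=\comat(\free\cD_A)$, I would invoke Theorem~\ref{thm:weak-kleski} (or the Webster--Winkler Krein--Milman theorem, Theorem~\ref{thm:ww}, combined with Theorems~\ref{thm:wearekleski} and~\ref{thm:0arv}): $\cD_A$ is compact matrix convex, hence is the closed matrix convex hull of its matrix extreme points; but for a free simplex one checks, using the classification just obtained, that the matrix extreme points are exactly the irreducible Arveson boundary points, namely the vertices $v\in\euc\cD_A(1)$, and that no closure is needed because there are only finitely many of them (so $\comat$ of a finite set is already closed, cf.\ Remark~\ref{rem:stoopid}). Thus $\cD_A=\cmco(\euc\cD_A(1))=\comat(\free\cD_A)$.

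The main obstacle I anticipate is the commutativity/diagonalizability step: turning ``$X$ is in the Arveson boundary of a spectrahedron whose pencil $L_A$ is block-diagonal with scalar blocks'' into ``$X$ is a direct sum of scalar points.'' The clean way is to show that for an Arveson boundary point the joint kernel data of the $\ell_k(X)$ is rigid enough that a common eigenvector decomposition exists, and then to rule out any genuine off-diagonal coupling via a dilation argument feeding into Lemma~\ref{lem:interlace}; making the dilation explicit (choosing $\alpha$, $\beta$ so that $L_A$ of the bordered tuple stays positive semidefinite yet $\alpha\ne0$ whenever $X$ fails to be block-scalar) is the part requiring genuine care, and it is where the boundedness hypothesis and the exact count of $g+1$ facets are used.
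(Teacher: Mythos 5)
Your setup (diagonal $A$, the $g+1$ affine functionals $\ell_k$, the inclusion $\euc\cD_A(1)\subseteq\arv\cD_A$ via Proposition \ref{prop:polyhedron}, and closure of $\arv\cD_A$ under direct sums) is fine, and your reduction of the last assertion to Theorem \ref{thm:weak-kleski} would work once the hull statement is in hand. But the heart of the theorem --- that every Arveson boundary point of $\cD_A$ decomposes, up to unitary equivalence, into scalar points, and more fundamentally that $\cD_A=\comat(\euc\cD_A(1))$ --- is not actually proved in your proposal. You defer it to an unspecified argument by contradiction (``one could conjugate to expose an off-diagonal block and build a dilation $\ldots$ with $\alpha\ne 0$'') ``feeding into Lemma \ref{lem:interlace},'' and you yourself flag this as the step requiring genuine care. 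Lemma \ref{lem:interlace} addresses a very particular situation (a diagonal tuple bordered by a single column that is unitarily equivalent to a diagonal direct sum), and it is not at all clear how to manufacture, for an arbitrary non-block-scalar $X\in\cD_A(n)$, a nontrivial dilation staying in $\cD_A$; doing so is essentially equivalent to the hull identity you are trying to prove.

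The paper's mechanism for this step is quite different and entirely constructive. Boundedness forces $\{I,A_1,\dots,A_g\}$ to be linearly independent, hence to span the full commutative C$^*$-algebra $\mathscr D$ of diagonal $(g+1)\times(g+1)$ matrices. For $X\in\cD_A(n)$, the matrix $Z=L_A(X)$ is then a positive element of $M_n(\mathscr D)\cong\oplus_{g+1}M_n$, so it decomposes as $Z=\sum_j Q_j\otimes P_j$ with $Q_j\in\mathscr D^+$ and $P_j\in M_n^+$ rank one. Writing $Q_j=L_A(x^j)$ for points $x^j\in\cD_A(1)$ (after normalizing $x_{j,0}=1$, which again uses boundedness) and matching coefficients against $\{I,A_1,\dots,A_g\}$ yields $\sum P_j=I$ and $X_k=\sum_j x_{j,k}P_j$, i.e.\ $X=V^*\Xi V$ for an isometry $V$ and $\Xi=\oplus_j x^j$ a direct sum of scalar points. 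This proves $\cD_A=\comat(\cD_A(1))=\comat(\euc\cD_A(1))$ outright, and the Arveson boundary description then falls out: for $X\in\arv\cD_A$ the relation $X=V^*\Xi V$ forces $VX=\Xi V$, so $X$ is a direct summand of $\Xi$, hence a commuting direct sum of vertices. To repair your proof you would need to supply this tensor-decomposition step (or an equivalent substitute); the contradiction-plus-interlacing route you sketch does not provide it.
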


The proof will use the notation $S^+$ to denote the positive semidefinite elements of a subset $S$ of a matrix algebra.

\begin{proof}
 By boundedness, the set $\{I,A_1,\dots,A_g\}$ is a linearly independent subset of the diagonal matrices in $\mathbb S_{g+1}$. Hence its span is the commutative C$^*$-algebra $\mathscr D$ of diagonal matrices in $M_{g+1}(\C)$. In particular, if $Z\in M_n(\mathscr D)^+ =(M_n\otimes \mathscr D)^+$ then $Z$ is in fact in  $\mathscr D^+\otimes M_n^+$. If $X\in \cD_A(n)$, then $L_A(X)\succeq 0$ and therefore $Z=L_A(X) \in \mathscr D^+\otimes M_n^+$.  It follows that there exists a positive integer $N$, $n\times n$ rank one matrices $P_j\in M_n^+$ and $Q_j\in \mathscr D^+$ for $1\le j \le N$  such that $Z=\sum Q_j\otimes P_j$.  For each $j$ there is a tuple $x_j=(x_{j,0},\dots,x_{j,g})\in \R^{g+1}$ with
\[
 Q_j = x_{j,0} I +\sum_{k=1}^g x_{j,k} A_k\succeq 0.
\]
If $x_{j,0}\le 0$, then $\sum_{k=1}^g x_{j,k} A_k \succeq 0$, and the domain $\cD_A(1)$ is unbounded. Hence $x_0> 0$ and, by replacing $P_j$ by $\frac{1}{x_{j,0}}P_j,$ we may assume $x_{j,0}=1$.
Letting $x^j=(x_{j,1},\ldots,x_{j,g})\in\cD_A(1)$, we thus have
\[
Z=\sum_{j} L_A(x^j)\otimes P_j = I\otimes\sum_j P_j
+\sum_{k=1}^g A_k \otimes \sum_j x_{j,k}P_j.
\]
Since $Z=L_A(X)=I\otimes I + \sum_k A_x\otimes X_k$, the linear
independence of   $\{I,A_1,\dots,A_g\}$
implies
\[
\sum_{j=1}^N P_j=I, \qquad \sum_{j=1}^N x_{jk}P_j=X_k, \; k=1,\ldots,g.
\]
Let $P_j=u_ju_j^*$ for $u_j\in\R^n$, $V={\rm col}(u_1^*,\ldots, u_N^*)$,
and $\Xi_k=x_{1k}\oplus\cdots\oplus x_{Nk}$. Then $V$ is an isometry, and
\[
X_k= V^* \Xi_k V.
\]
Furthermore, the tuple $\Xi=(\Xi_1,\ldots,\Xi_g)\in\cD_A$ since
$\Xi=x^1\oplus\cdots\oplus x^N$ and each $x^j\in\cD_A(1)$.
We conclude $\cD_A$ is the matrix convex hull of $\euc \cD_A(1)$. The rest of the theorem now follows easily.
\end{proof}

\begin{cor}
The matrix convex hull $K$ of $g+1$
affine independent points
in $\R^g$ is a free simplex if and only if $0$ is in the interior of $K(1)$.
\end{cor}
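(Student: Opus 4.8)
The plan is to prove both directions, with the forward implication being the essentially trivial one and the reverse implication requiring the machinery of Theorem~\ref{thm:SimplexMain}. Let $v^0,\dots,v^g\in\R^g$ be affine independent and set $K=\comat(\{v^0,\dots,v^g\})$ (viewing each $v^j$ as a point in $\smatg(1)$, and implicitly forming the tuple $\Omega=v^0\oplus\cdots\oplus v^g$ so that $K=\comat(\{\Omega\})$). For the forward direction, if $K$ is a free simplex then by definition it is a free spectrahedron, hence matrix convex, hence $K(1)$ is a convex subset of $\R^g$; and $K(1)=\conv\{v^0,\dots,v^g\}$ is a genuine $g$-simplex (affine independence gives full dimension $g$). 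A bounded free spectrahedron with a minimal defining tuple in $\mathbb S_{g+1}^g$ must have $0$ in the interior of $\cD(1)$: indeed $L_A(0)=I\succ0$, so $0$ lies in the interior of each $\cD_A(n)$, in particular $\cD_A(1)$. This disposes of one direction.

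For the reverse direction, assume $0\in\inter K(1)$. First I would build a candidate defining tuple. Since $0$ is interior to the $g$-simplex $\conv\{v^0,\dots,v^g\}$, there is an affine change of coordinates carrying this simplex to the standard one; concretely, there exist scalars realizing $0$ as a strictly positive convex combination $\sum_j \mu_j v^j$ with all $\mu_j>0$. The standard trick is to encode the $g+1$ facet inequalities of the simplex $K(1)$ by a diagonal tuple: each facet of $\conv\{v^0,\dots,v^g\}$ is $\{x: \ell_i(x)\le 1\}$ for an affine functional $\ell_i$ (normalized using $\ell_i(0)<1$, possible since $0$ is interior), and writing $\ell_i(x)-\ell_i(0) = \sum_k (A_k)_{ii} x_k$ and rescaling, one obtains a diagonal $A\in\mathbb S_{g+1}^g$ with $\cD_A(1)=\conv\{v^0,\dots,v^g\}=K(1)$. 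Boundedness of $\cD_A(1)$ forces $\{I,A_1,\dots,A_g\}$ to be linearly independent in the $(g+1)$-dimensional space of diagonal matrices (otherwise a nontrivial relation would give an unbounded direction, exactly as in the proof of Theorem~\ref{thm:SimplexMain}), hence $A$ is a minimal defining tuple and $\cD_A$ is a free simplex.

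It remains to identify $K=\comat(\{v^0,\dots,v^g\})$ with $\cD_A$. The inclusion $K\subseteq\cD_A$ is immediate since $\cD_A$ is matrix convex and contains each scalar point $v^j$ (as $v^j\in\cD_A(1)$). For the reverse inclusion I would invoke Theorem~\ref{thm:SimplexMain} applied to $\cD_A$: it gives $\cD_A=\comat(\free\,\cD_A)$ and $\free\,\cD_A=\euc\cD_A(1)$. Now $\euc\cD_A(1)$ is the set of Euclidean extreme points of the simplex $\conv\{v^0,\dots,v^g\}$, which is precisely the vertex set $\{v^0,\dots,v^g\}$ (a simplex has exactly its $g+1$ affine-independent vertices as extreme points). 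Hence $\cD_A=\comat(\{v^0,\dots,v^g\})=K$, so $K$ is a free simplex.

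The main obstacle I anticipate is the bookkeeping in the coordinate reduction of the second paragraph: producing the diagonal tuple $A\in\mathbb S_{g+1}^g$ with $\cD_A(1)$ equal to the given simplex and checking minimality. The conceptual content is standard (a $g$-simplex with $0$ interior is, after affine change of variables, cut out by $g+1$ normalized linear inequalities, which assemble into a diagonal pencil), and the linear-independence/boundedness argument is a direct echo of what already appears in the proof of Theorem~\ref{thm:SimplexMain}; everything downstream is a clean appeal to Theorem~\ref{thm:SimplexMain} plus the elementary fact that the extreme points of a simplex are its vertices.
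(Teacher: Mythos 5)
Your argument is correct in outline and is the intended derivation: the paper states this corollary without proof, as an immediate consequence of Theorem \ref{thm:SimplexMain}, and your reverse direction (encode the $g+1$ facet inequalities of the simplex $K(1)$ in a diagonal tuple $A\in\mathbb S_{g+1}^g$, then conclude $\cD_A=\comat(\euc\cD_A(1))=\comat(\{v^0,\dots,v^g\})=K$) together with the trivial forward direction ($L_A(0)=I\succ 0$) is exactly what is needed.

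The one step you should not wave at is ``hence $A$ is a minimal defining tuple.'' Linear independence of $\{I,A_1,\dots,A_g\}$ does not, as a general principle, imply minimality: minimality asserts that no tuple $B$ of size at most $g$, diagonal or not, satisfies $\cD_B=\cD_A$, and that is not a statement about the span of the coefficients. The claim is true here but needs its own short argument. For instance: if $\cD_B=\cD_A$ with $B\in\smatdg$, then every boundary point of the simplex $\cD_A(1)=\cD_B(1)$ is a point where the positive semidefinite matrix $L_B(x)$ is singular, so $\det L_B(x)$ (a nonzero polynomial, since $\det L_B(0)=1$, of degree at most $d$) vanishes on a relatively open subset of each of the $g+1$ distinct facet hyperplanes, hence on each whole hyperplane; the $g+1$ pairwise non-proportional linear factors force $d\ge g+1$. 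This point cannot be skipped, because the paper's definition of a free simplex requires a \emph{minimal} diagonal defining tuple and Theorem \ref{thm:SimplexMain} is stated for free simplices. With that supplied, your proof is complete.
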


Let $\{e_1,\dots,e_{g+1}\}$ denote the standard orthonormal basis in $\C^{g+1}$.  Let $\Na\in \smatgpg$ denote the $g$-tuple with $\Na_j=-e_j e_j^* + \frac1{g+1}e_{g+1}e_{g+1}^*$ for $1\le j\le g$. Thus $X\in \cD_\Na$ if and only if $X_j\ge -I$ for $1\le j\le g$ and $\sum X_j \le (g+1)I$.  We call $\cD_\Na$ the  \df{Naimark spectrahedron}. Because the affine linear mapping $\cT:\smatg\to \smatgone$ defined by $X_j \mapsto \frac 1g (X_j+I)$ for $1\le j\le g$ and $X_{g+1} \mapsto\frac{g+1}{g} X_j$,  implements an affine linear bijection between $\cD_\Na$ and the set
\[
 \cS=\{Y\in \smatgone: Y_j\succeq 0, \, I = \sum Y_j\}.
\]
The set $\cS$  is not  a spectrahedron (and hence not a free simplex), since it doesn't contain $0$, but it is the translation of a free simplex.

\begin{prop}\label{prop:naimark}
 A spectrahedron $\cD_A\subset \smatg$ is a free simplex if and only if it is affine linearly equivalent to the Naimark spectrahedron.
\end{prop}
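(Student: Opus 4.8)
The plan is to use that invertible affine linear maps of $\smatg$ carry a free spectrahedron whose defining pencil is diagonal of size $g+1$ to another one of the same kind, and to combine this with elementary convex geometry of simplices. The mechanism: let $\phi$ be an affine bijection of $\R^g$, extended entrywise to $\smatg$ via $\phi(X)_j=c_jI+\sum_k t_{jk}X_k$ with $(t_{jk})$ invertible. Substituting $X=\phi^{-1}(Y)$ into $L_A$ gives $L_A(\phi^{-1}(Y))=D\otimes I-\sum_j C_j\otimes Y_j$, where $D=L_A(\phi^{-1}(0))$ and each $C_j$ is a real linear combination of $A_1,\dots,A_g$; when $\phi^{-1}(0)\in\inter\cD_A(1)$ one has $D\succ0$, and conjugating by the invertible $D^{1/2}\otimes I$ exhibits $\phi(\cD_A)=\cD_B$ with $B_j=D^{-1/2}C_jD^{-1/2}$, so $B$ is diagonal of size $g+1$ whenever $A$ is. Two standard facts will be used: a bounded full-dimensional polytope in $\R^g$ has at least $g+1$ facets, with equality precisely for simplices; and a polytope with $0$ in its interior has a unique irredundant description by inequalities $\langle b,x\rangle\le1$.

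For the forward direction, let $\cD_A$ be a free simplex, so $\cD_A$ is bounded and $A$ is a diagonal minimal defining tuple of size $g+1$. Then $\cD_A(1)$ is an intersection of at most $g+1$ half-spaces (the diagonal entries of $L_A$), is bounded, and contains $0$ in its interior since $L_A(0)=I\succ0$; hence it is a $g$-simplex. Any two $g$-simplices are affine linearly equivalent, so there is an affine bijection $\phi$ of $\R^g$ with $\phi(\cD_A(1))=\cD_\Na(1)$; since $0\in\inter\cD_\Na(1)$, the mechanism gives $\phi(\cD_A)=\cD_B$ with $B$ diagonal of size $g+1$ and $\cD_B(1)=\cD_\Na(1)$. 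The diagonal entries of $L_B$ and of $L_\Na$ are two descriptions of the $g$-simplex $\cD_\Na(1)$ by inequalities $\langle\,\cdot\,,x\rangle\le1$, both irredundant because a $g$-simplex has exactly $g+1$ facets; by uniqueness the multiset of diagonal functionals of $B$ coincides with that of $\Na$, so $B$ is a reordering of the diagonal entries of $\Na$ and $\cD_B=\cD_\Na$. Thus $\phi(\cD_A)=\cD_\Na$.

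For the converse, let $\phi$ be an affine bijection of $\R^g$ with $\phi(\cD_A)=\cD_\Na$. As $\cD_A$ is a free spectrahedron, $0\in\inter\cD_A(1)$, so the mechanism applied to $\phi^{-1}$ and $\Na$ gives $\cD_A=\phi^{-1}(\cD_\Na)=\cD_B$ with $B$ diagonal of size $g+1$. Moreover $\cD_A$ is bounded: $X\in\cD_\Na$ satisfies $X_j\succeq-I$ and $\sum_k X_k\preceq(g+1)I$, hence $X_j\preceq 2gI$, so $\cD_\Na$ is bounded, and affine bijections preserve boundedness. It remains to check $B$ is a minimal defining tuple. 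Now $\cD_A(1)=\phi^{-1}(\cD_\Na(1))$ is a $g$-simplex, and I claim any defining tuple $\Omega$ of a free spectrahedron whose scalar level is a $g$-simplex has size at least $g+1$: the $g+1$ facets of $\cD_\Omega(1)$ lie in $g+1$ distinct hyperplanes, and since $\partial\cD_\Omega(1)\subseteq\{x:\det L_\Omega(x)=0\}$ (immediate from continuity of eigenvalues), $\det L_\Omega$ vanishes on the relative interior of each facet and hence on each facet hyperplane; so the $g+1$ pairwise coprime degree-one forms defining these hyperplanes all divide $\det L_\Omega$, and since $\deg\det L_\Omega$ is at most the size of $\Omega$, the claim follows. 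Therefore every defining tuple of $\cD_A$ has size at least $g+1$, so $B$ (of size $g+1$) is minimal, and $\cD_A$ --- bounded, with diagonal minimal defining tuple of size $g+1$ --- is a free simplex.

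The affine-substitution bookkeeping and the two convex geometry facts are routine. The one substantive step is the size lower bound in the converse, i.e., that a free spectrahedron whose scalar points form a $g$-simplex cannot be cut out by a pencil of size $\le g$ (equivalently, that $\Na$ is a minimal defining tuple for $\cD_\Na$). The determinantal degree count above is the shortest route; alternatively one could invoke the characterization of minimal defining tuples in \cite[Proposition~3.17, Corollary~3.18]{HKM13} together with the fact, via Corollary \ref{cor:RG+}, that the kernels of $L_\Na$ at the $g+1$ vertices of $\cD_\Na(1)$ together span $\C^{g+1}$.
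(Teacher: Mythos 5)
Your proof is correct, but it proceeds by a genuinely different route from the paper's in both directions. For the forward implication the paper explicitly builds the affine map by row-reducing the $(g+1)\times g$ matrix of diagonals of $A$ (extracting an invertible $g\times g$ block $\bS$, normalizing, and landing on the model set $\{Y_j\succeq 0,\ I\succeq\sum Y_j\}$), whereas you invoke the abstract affine equivalence of all $g$-simplices and then identify the transported diagonal pencil with $\Na$ via uniqueness of the irredundant description $\langle b,x\rangle\le 1$ of a full-dimensional polytope with $0$ in its interior; both are sound, the paper's being more constructive and yours shorter on bookkeeping. The more substantive divergence is in the converse: the paper transports the minimal defining tuple $A$ to a minimal defining tuple $B$ of $\cD_\Na$, applies the Gleichstellensatz to conclude $B\usim\Na$, and then back-propagates commutativity and the size count to $A$; you instead transport $\Na$ forward to a diagonal tuple $B$ of size $g+1$ with $\cD_A=\cD_B$ and prove minimality of $B$ directly, by showing $\det L_\Omega$ is divisible by the $g+1$ pairwise coprime affine forms cutting out the facets of the $g$-simplex $\cD_\Omega(1)$, so every defining tuple has size at least $g+1$. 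This determinantal degree count is correct (the boundary containment $\partial\cD_\Omega(1)\subseteq\{\det L_\Omega=0\}$ and the vanishing-on-a-relatively-open-subset argument both hold, and $\det L_\Omega(0)=1$ rules out the zero polynomial) and it buys you independence from the Gleichstellensatz; in fact it supplies a proof of the minimality of $\Na$ itself, a point the paper's appeal to \cite[Theorem 3.12]{HKM13} uses implicitly. What the paper's version buys in exchange is the extra structural conclusion that the originally given minimal tuple $A$ is itself a commuting tuple of size $g+1$, rather than merely that some diagonal minimal defining tuple exists.
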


\begin{proof}
To prove the only if direction, it suffices to show if $\cD_A$ is a free simplex, then it is affine linearly equivalent to
\beq\label{eq:preNaimark}
\mathfrak S =\{Y\in \smatg: Y_j\succeq 0, I\succeq \sum Y_j\}.
\eeq
Let $\bA$ denote the $(g+1)\times g$ matrix whose $j$-th column is the diagonal of $A_j$.
Since $\cD_A$ is bounded, the tuple $A$ is linearly independent. Thus, without loss of generality, the matrix $\bS=(\bA_{k,j})_{j,k=1}^g$, whose $j$-th column of $S$ are the first $g$ entries of the diagonal of $A_j,$   is invertible.
  Let $\lambda = \bS^{-1}{\bfo}$,  where ${\bf 1}\in \R^g$ is the vector with each entry equal to $1$.  Define $\cT:\smatg \to \smatg$ by
\[
 Z =\cT(X)= \bS (\lambda-X) ={\bf 1} + \bS X.
\]
 Thus $\cT$ is an affine linear map and  $Z_j = I+\sum_{s=1}^g \bA_{j,s} X_s$.  Since $\cD_A$ is bounded and non-empty, the same is true of $\cT(\cD_A)$. Further,  $I+\sum_{s=1}^g \bA_{j,s} X_s \succeq 0$ if and only if $Z_j\succeq 0$. Moreover, since
\[
I\succeq  \sum_{s} \bA_{g+1,s} X_s  = \sum_t \left [\sum_{s} \bA_{g+1,s}(\bS^{-1})_{s,t}\right ](Z_t-I),
\]
 $I+\sum_{s} \bA_{g+1,s} X_s \succeq 0$ if and only if $I+\sum_t \beta_t I \succeq \sum \beta_t Z_t$, where  $\beta_t = \sum_s \bA_{g+1,s}(\bS^{-1})_{s,t}.$  Hence $X\in \cD_A$ if and only if
\[
 Z_j \succeq 0, \,  I+\sum \beta_t \succeq \sum \beta_t Z_t.
\]
Since the set of such $Z$, namely $\cT(\cD_A)$ is bounded,  $\beta_t>0$ for each $t.$ Replacing $Z_t$ by $Y_t= \frac{\beta_t}{1+\sum\beta_t}Z_t$ we have mapped $\cD_A$ to $\mathfrak S$ via a bijective  affine linear transformation.

For the converse, suppose $A$ is a minimal defining tuple for $\cD_A$ and $\cD_A$  is affine linearly equivalent to $\cD_\Na$ via an affine linear map $\cT$. Thus, there exists scalars $(b_{j,k})$ and $c_j$ for $1\le j,k\le g$ such that $X\in \cD_\Na$ if and only if the tuple $\cT(X)$ defined by
\[
 \cT(X)_j = \sum_{k=1}^g b_{j,k} X_k + c_j I
\]
is in $\cD_A$. Thus, $X\in \cD_\Na$ if and only if
\[
 I+ \sum_{j=1}^g A_j\otimes \left[\sum_k b_{j,k} X_K + c_jI\right ]  \succeq 0.
\]
 Rearranging,
\[
 [I+\sum c_j A_j] + \sum_{k} \sum_j (A_j b_{j,k}) \otimes X_k  \succeq 0.
\]
 Choosing $X=0$, it follows that $I+\sum c_j A_j \succ 0$. Let $P$ denote the positive square root of this operator and let $B_k = P^{-1} \sum_j (A_j b_{j,k})P^{-1}$. With this notation, $X\in \cD_\Na$ if and only if
\[
 I+ \sum_{k=1}^n B_k\otimes X_k \succeq 0.
\]
Thus $\cD_\Na=\cD_B$. Since $A$ is a minimal defining tuple for $\cD_A$, the tuple $B$ is minimal defining for $\cD_B$. It follows from the Gleichstellensatz (\cite[Corollary 3.18 or Theorem 3.12]{HKM13}; see also \cite{Zal+}), that $\Na$ and $B$ are unitarily equivalent and in particular $B$ (and hence $A$) has size $g+1$ and  $B$ is a tuple of  commuting self adjoint matrices. Moreover, for some scalars $\alpha_k$,
\[
 I - P^{-2} =  \sum \alpha_k B_k.
\]
Hence $P^{-2}$ commutes with each $B_k$ and since $P$ is positive definite, so does $P$.  Consequently the matrices  $B_k^\prime = \sum_{j} b_{j,k}A_j$ commute and since the matrix $(b_{j,k})$ is invertible, $A$ is a tuple of commuting matrices of size $g+1$ and the proof is complete.
\end{proof}

\begin{cor}\label{cor:pdSimplex}
The polar dual of a free simplex is a free simplex.
\end{cor}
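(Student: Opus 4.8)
The plan is to combine the structural description of free simplices in Theorem~\ref{thm:SimplexMain} with the polar-dual/Arveson-boundary dictionary of Theorem~\ref{thm:weak-kleski}. Let $\cD_A$ be a free simplex. By Theorem~\ref{thm:SimplexMain}, $\cD_A=\comat\big(\euc\cD_A(1)\big)$, and since $\cD_A(1)\subset\R^g$ is a $g$-dimensional simplex, $\euc\cD_A(1)$ is exactly the set of its $g+1$ vertices $v^1,\dots,v^{g+1}\in\R^g$. Setting $\Omega:=v^1\oplus\cdots\oplus v^{g+1}\in\mathbb S_{g+1}^g$, a diagonal tuple, I get $\cD_A=\comat(\{v^1,\dots,v^{g+1}\})=\comat(\{\Omega\})$ (a tuple and the family of its orthogonal direct summands have the same matrix convex hull, since matrix convex sets are closed under direct sums and under compression to reducing subspaces). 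Since $L_A(0)=I\succ0$ we have $0\in\cD_A$, so the converse direction of Theorem~\ref{thm:weak-kleski} gives $\cD_A^\circ=\cD_\Omega$; thus $\cD_A^\circ$ is already presented as a free spectrahedron with a diagonal defining tuple of size $g+1$.

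It then remains to check that $\cD_\Omega$ satisfies the definition of a free simplex: boundedness, and minimality of $\Omega$. For boundedness, note $L_A(0)=I\succ0$ puts $0$ in the interior of $\cD_A(1)$, and the polar dual of a matrix convex set with $0$ in its interior is bounded. For minimality, compute $\cD_\Omega(1)=\{x\in\R^g:\langle v^j,x\rangle\le1,\ 1\le j\le g+1\}$, which is precisely the classical polar dual of the simplex $\cD_A(1)$; because $0$ is interior to that simplex, this dual is again a $g$-simplex, with all $g+1$ half\-space inequalities irredundant. Hence for every $j_0$ the spectrahedron $\cD_{\oplus_{j\ne j_0}v^j}(1)$ strictly contains $\cD_\Omega(1)$, so no irreducible block $v^{j_0}$ of $\Omega$ may be dropped without enlarging $\cD_\Omega$. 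By the standard description of minimal defining tuples (decompose into irreducibles, discard redundant blocks), via the Gleichstellensatz and \cite[Proposition~3.17, Corollary~3.18]{HKM13}, $\Omega$ is minimal, whence $\cD_A^\circ=\cD_\Omega$ is a free simplex.

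The step I expect to be the crux is the minimality of $\Omega$: one must be certain that this purely level-one fact about irredundant facets of the dual simplex certifies that the size-$(g+1)$ diagonal tuple $\Omega$ cannot be replaced by a strictly smaller defining tuple for $\cD_\Omega$. This is exactly what the uniqueness-of-minimal-defining-tuple machinery from \cite{HKM13} provides; as an alternative one can argue directly that any defining tuple $B$ for a polytope with $g+1$ facets in $\R^g$ has size at least $g+1$, since $\det L_B$ is divisible by the product of the $g+1$ facet-defining linear forms. The remaining ingredients — reducing to $\comat(\{\Omega\})$, invoking Theorem~\ref{thm:weak-kleski}, and the boundedness check — are routine.
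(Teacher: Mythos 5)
Your proof is correct and follows essentially the same route as the paper: identify $\euc\cD_A(1)$ with the $g+1$ vertices via Theorem \ref{thm:SimplexMain}, form the diagonal tuple $\Omega$, and apply the converse direction of Theorem \ref{thm:weak-kleski} to get $\cD_A^\circ=\cD_\Omega$. The only difference is that you explicitly verify boundedness and the minimality of $\Omega$ (needed for $\cD_\Omega$ to meet the paper's definition of a free simplex), details the paper's proof leaves implicit; your level-one irredundant-facets argument for minimality is a correct way to supply them.
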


\begin{proof}
Assume $\cD_A$ is a free simplex
and let $\omega=\euc \cD_A(1)\subseteq\R^g$.
Then $\omega$ has $g+1$ elements by
Proposition \ref{prop:naimark}. Build
the corresponding diagonal matrices
 $\Omega_j\in\mathbb S_{g+1}$, $j=1,\ldots,g$.
Then
by Theorem \ref{thm:SimplexMain},
$\cD_A=\mco(\{\Omega\})$. Thus by
Theorem \ref{thm:weak-kleski},
$\cD_A^\circ=\cD_\Omega$ is a free simplex.
\end{proof}

In \cite{FNT+}
Fritz, Netzer and Thom use extreme points to investigate when an abstract operator
system has a finite-dimensional concrete realization.
They show that the maximal operator system above a convex set $C\subseteq\R^g$ is a free spectrahedron if
 and only if C is a polyhedron containing $0$ in its interior \cite[Theorem 3.2]{FNT+}. Similarly,
 among such $C\subseteq\R^g$, the minimal operator system above
 $C$ is a free spectrahedron if and only if $C$ is a simplex
 \cite[Theorem 4.7]{FNT+}.

\section{Applications}\label{sec:exA}

In this section we give further applications of the theory of matrix convex sets.
Subsection  \ref{ssec:cube}
characterizes extreme points of the
free cube.
 In Subsection \ref{ssec:ball} the theory of extreme points is applied to the study of free spectrahedra $\cD_A$ such that $\cD_A(1)$ is the unit disk $\{(x,y)\in\R^2: x^2+y^2 \le 1\}$. In  Subsection \ref{ssec:TV} extreme points are used to analyze the matrix convex hull of the TV screen $\{(X,Y): I-X^2-Y^4 \succeq 0\} \subset \mathbb S^2$.

\subsection{Free cube}\label{ssec:cube}
As noted in Section \ref{sec:seldom},
the Arveson boundary points of the free cube
$\cC=\{X\in\smatg: \|X_i\|\leq1 \text{ for }1\leq i\leq g\}$
are exactly tuples $J=(J_1,\ldots,J_g)$ of symmetries.
In this section we show each Euclidean extreme point
of $\cC$ is an Arveson boundary point.

\begin{prop}\label{prop:cube}
Let $\cC\subset\smatg$ be the free cube. Then $\euc\cC=\arv\cC$.
\end{prop}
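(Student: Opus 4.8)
The plan is to prove the two inclusions $\arv\cC\subseteq\euc\cC$ and $\euc\cC\subseteq\arv\cC$ separately. The first is immediate from Proposition~\ref{prop:ArvboundaryEuclidean}. For the second, I will use the description recorded at the start of this section, namely that $\arv\cC$ consists exactly of the tuples $J=(J_1,\dots,J_g)$ of symmetries ($J_j=J_j^*$ and $J_j^2=I$); thus it suffices to show that if $X\in\cC(n)$ is a Euclidean extreme point, then each coordinate $X_j$ is a symmetry.

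The key structural point is that the free cube is a product at every level: for each $n$,
\[
  \cC(n)=\mathcal I_n\times\cdots\times\mathcal I_n\quad(g\text{ factors}),\qquad
  \mathcal I_n:=\{A\in\mathbb S_n:-I\preceq A\preceq I\},
\]
since the $i$-th constraint $\|X_i\|\le1$ involves only $X_i$. I would then invoke the elementary fact that a point of a finite Cartesian product of convex sets is a Euclidean extreme point if and only if each of its coordinates is a Euclidean extreme point of the corresponding factor. Hence $X\in\euc\cC(n)$ forces $X_j\in\euc\mathcal I_n$ for all $j$, and the whole proposition reduces to the classical statement that $\euc\mathcal I_n$ consists precisely of the self-adjoint symmetries; here only the inclusion $\euc\mathcal I_n\subseteq\{A:A^2=I\}$ is needed.

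To prove that inclusion, suppose $A\in\mathcal I_n$ is not a symmetry, so that $B:=I-A^2$ is nonzero; note $B\succeq0$ and $B$ commutes with $A$ (both are functions of $A$). For $\varepsilon>0$ one computes
\[
  I-(A\pm\varepsilon B)^2=B\bigl(I\mp2\varepsilon A-\varepsilon^2 B\bigr).
\]
Since $\|A\|\le1$ and $\|B\|\le1$, the factor $I\mp2\varepsilon A-\varepsilon^2 B$ is positive definite once $2\varepsilon+\varepsilon^2<1$; being the product of two commuting positive semidefinite matrices, the right-hand side is then positive semidefinite, so $A\pm\varepsilon B\in\mathcal I_n$ for such $\varepsilon$. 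As $A=\tfrac12\bigl[(A+\varepsilon B)+(A-\varepsilon B)\bigr]$ with $A+\varepsilon B\ne A-\varepsilon B$, the point $A$ is not a Euclidean extreme point of $\mathcal I_n$. Combining this with the two reductions above gives $\euc\cC\subseteq\arv\cC$, hence equality.

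I do not expect a genuine obstacle: the only mildly technical ingredients are the norm estimate certifying $A\pm\varepsilon B\in\mathcal I_n$ and the routine fact on extreme points of products of convex sets. As an alternative to quoting the description of $\arv\cC$, once each $X_j$ is known to be a symmetry one can finish directly: if $\left(\begin{smallmatrix}X&\alpha\\\alpha^*&\beta\end{smallmatrix}\right)\in\cC$, then evaluating the $j$-th block contraction on $\left(\begin{smallmatrix}v\\0\end{smallmatrix}\right)$ for $v$ an eigenvector of $X_j$ gives $\|X_jv\|^2+\|\alpha_j^*v\|^2\le\|v\|^2$, forcing $\alpha_j^*v=0$; since such $v$ span, $\alpha_j=0$, so $X\in\arv\cC$.
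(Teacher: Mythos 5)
Your proof is correct, but it takes a different route from the paper's. The paper writes the cube as $\cD_A$ for the explicit diagonal pencil $A_j=(e_je_j^*)\otimes\bigl(\begin{smallmatrix}1&0\\0&-1\end{smallmatrix}\bigr)$ and then invokes the Ramana--Goldman-type criterion of Corollary \ref{cor:RG+}: if $X_1^2\neq I$ it produces a nonzero $Y=(uu^*,0,\dots,0)$ with $\ker L_A(X)\subseteq\ker\Lambda_A(Y)$, contradicting extremality. You instead bypass the LMI representation entirely, using the Cartesian-product structure $\cC(n)=\prod_j\mathcal I_n$ to reduce to a single operator interval, and then exhibit the explicit perturbation $A\pm\varepsilon(I-A^2)$; your norm estimate and the commuting-PSD-product argument are sound, as is the reduction to factors. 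Both arguments are at bottom perturbations supported where $I-X_j^2\succ0$, but yours is more elementary and self-contained, while the paper's fits into its general machinery for spectrahedra (Corollary \ref{cor:RG+}) and so generalizes more readily to non-product LMI domains. A further small merit of your write-up is that your closing paragraph actually verifies the inclusion $\{$tuples of symmetries$\}\subseteq\arv\cC$ (needed to conclude $\euc\cC\subseteq\arv\cC$), which the paper dispatches by reference to the remark in Section \ref{sec:seldom}.
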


\begin{proof}
Letting $\{e_1,\ldots,e_g\}$ denote the standard basis
of $\C^g$, and $A_j=(e_je_j^*)\otimes \begin{pmatrix}1 & 0 \\ 0 & -1\end{pmatrix}$ for $1\leq j\leq g$, we have
$\cC=\cD_A$. We shall use the Arveson-like characterization (Corollary \ref{cor:RG+}) of Euclidean
extreme points of $\cD_A$.

Let $X\in\euc\cD_A(n)$. That is, $X\in\cD_A(n)$
is such that $\ker L_A(X)\subset \ker \La_A(Y)$ for
 $Y\in\smatng$  implies $Y=0$. Assume that
  $X_j^2\neq1$ for some $j$. Without loss of generality, $j=1$. Let $0\neq u\in\C^n$ be any vector orthogonal to
  $\ker (I-X_1^2)$ and form the tuple $Y\in\smatng$ by
  $Y_1=uu^*$ and $Y_k=0$ for $k\geq 2$. Then
  $\ker L_A(X)\subset \ker \La_A(Y)$ and $Y\neq0$, violating $X\in\euc\cD_A$. This contradiction shows
  $X_j^2=I$ for all $j$, i.e., $\euc\cC\subseteq\arv\cC$.
  The converse inclusion is obvious.
\end{proof}

 \subsection{Disks}\label{ssec:ball}
In this subsection we study the extreme points of two different free spectrahedra
whose scalar points describe the unit disk.

\subsubsection{Arveson boundary of the Wild Disk}\label{ssec:arvWDisk}
 Consider the pencil
\[
 L_A(x) = \begin{pmatrix} 1 & x_1 & x_2 \\ x_1 & 1 & 0 \\ x_2 & 0 & 1\end{pmatrix}
\]
 The domain $\cD_A$ is the \df{wild disk}.  Note that $X\in \cD_A$ if and only if $p(X)= I-X_1^2 -X_2^2\succeq 0$.
In anticipation of the forthcoming flood of subscripts we change the
notation $(X_1,X_2)$ to $(X,Y)$ here in Subsection \ref{ssec:arvWDisk}
and in Section \ref{ssec:TV}.

We now give an estimate on the
 size of the kernel of $L_A(X,Y)$ for $(X,Y)$ an Arveson boundary point.
Suppose $(X,Y)$ has size $3n$ and $L_A(X,Y) \succeq 0$.  Let $\cK$ denote the kernel of $p(X,Y)$. A straightforward
 computation shows $k\in \cK$ if and only if
\begin{equation}
 \label{eq:kerLAX}
\begin{pmatrix} k & -X k & -Y k\end{pmatrix}^* \in \ker(L_A(X)).
\end{equation}
 Write
 $\mathbb C^{n} = \cK\oplus \cK^\perp.$  Suppose $\gamma=(\alpha,\beta)\in M_{n,2}(\mathbb C^2) = \mathbb C^{n\times 2}$ and
\[
 \ker(L_A(X))\subset \ker(\La_A(\gamma)^*).
\]
 It is readily verified that this inclusion is equivalent to
\begin{equation}
 \label{eq:wild-rels}
 \begin{split}
  (\alpha^* X+\beta^* Y )k & =0 \\
  \alpha^* k & =0 \\
  \beta^* k & = 0
 \end{split}
\end{equation}
 for all $k\in \cK$.  The last two relations imply that $\alpha,\beta \in \cK^\perp$.  Let $P$ denote the projection onto $\cK^\perp$ and
 $Q=I-P$ the projection onto $\cK$. Further, let
 $X_{12} = Q X P$ and similarly for $Y_{12}$.\looseness=-1

\begin{lemma}
 \label{lem:wild-rels}
    There is no nontrivial solution $\gamma= (\alpha,\beta)$ to the system \eqref{eq:wild-rels} (equivalently $(X,Y)$ is in the Arveson boundary
    of $\cD_A$)
    if and only if both $X_{12}$ and $Y_{12}$ are one-to-one and $\rg(X_{12})\cap \rg(Y_{12}) = (0)$.
\end{lemma}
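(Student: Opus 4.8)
The plan is to reformulate the system \eqref{eq:wild-rels} entirely in terms of the projections $P$ and $Q$ and then read off the two conditions. Recall we already know from the last two relations that any solution must satisfy $\alpha,\beta\in\cK^\perp$, i.e. $\alpha=P\alpha$ and $\beta=P\beta$. So a solution $\gamma=(\alpha,\beta)$ is nontrivial precisely when $(\alpha,\beta)\in\cK^\perp\oplus\cK^\perp$ is nonzero and the remaining relation $(\alpha^*X+\beta^*Y)k=0$ holds for all $k\in\cK$. The key observation is that $(\alpha^*X+\beta^*Y)k=0$ for all $k\in\cK$ is the same as saying $\alpha^*XQ+\beta^*YQ=0$ as an operator, and since $\alpha^*=\alpha^*P$ and $\beta^*=\beta^*P$, this reads $\alpha^* (PXQ)+\beta^*(PYQ)=0$, i.e. $\alpha^* X_{12}+\beta^* Y_{12}=0$ (using $X_{12}=QXP$, so $X_{12}^*=PXQ$; the relation is $X_{12}^*{}^*$... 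I would just be careful and write it as $\langle X_{12}\xi,\alpha\rangle + \langle Y_{12}\xi,\beta\rangle = 0$ for all $\xi\in\cK^\perp$). Thus nontrivial solutions exist if and only if there is a nonzero $(\alpha,\beta)\in\cK^\perp\oplus\cK^\perp$ orthogonal to the range of the operator $\cK^\perp\to\cK^\perp\oplus\cK^\perp$, $\xi\mapsto (X_{12}\xi, Y_{12}\xi)$.

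From here it is pure finite-dimensional linear algebra. The map $T\colon \xi\mapsto(X_{12}\xi,Y_{12}\xi)$ has a nontrivial orthogonal complement in $\cK^\perp\oplus\cK^\perp$ if and only if $T$ is not surjective, if and only if $T$ is not injective or $\dim\rg T<2\dim\cK^\perp$. I would handle the two assertions separately. First, injectivity of $T$: $T\xi=0$ means $X_{12}\xi=0$ and $Y_{12}\xi=0$, so $T$ is injective iff $\ker X_{12}\cap\ker Y_{12}=(0)$. Second, surjectivity of $T$ given injectivity: I would invoke the elementary fact that for operators $A,B$ on a finite-dimensional space, $\rg\begin{pmatrix}A\\B\end{pmatrix}$ has full dimension (equal to twice the dimension of the domain) iff $A$ and $B$ are each injective and $\rg(A^*)+\rg(B^*)$... no — cleaner: the orthogonal complement of $\rg T$ consists of pairs $(\alpha,\beta)$ with $X_{12}^*\alpha + Y_{12}^*\beta = 0$, i.e. $X_{12}^*\alpha = -Y_{12}^*\beta$. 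Such a pair is nonzero with $\alpha=0$ iff $\ker Y_{12}^*\neq(0)$ (i.e. $Y_{12}$ not surjective), and symmetrically; and a pair with both $\alpha,\beta\neq0$ exists iff $\rg X_{12}^*\cap\rg Y_{12}^*\neq(0)$. So assembling: no nontrivial $(\alpha,\beta)$ exists iff $X_{12}^*,Y_{12}^*$ are both injective and $\rg X_{12}^*\cap\rg Y_{12}^*=(0)$.

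Finally I would dualize this back to the stated form. On a finite-dimensional space, $X_{12}^*\colon\cK^\perp\to\cK^\perp$ injective is equivalent to $X_{12}\colon\cK^\perp\to\cK$... wait, here $X_{12}=QXP$ maps $\cK^\perp$ into $\cK$, so $X_{12}^*$ maps $\cK$ into $\cK^\perp$; injectivity of $X_{12}^*$ is equivalent to surjectivity of $X_{12}$, not injectivity. I need to be more careful about which conclusion the lemma wants. The lemma asserts: no nontrivial solution iff $X_{12}$ and $Y_{12}$ are one-to-one and $\rg X_{12}\cap\rg Y_{12}=(0)$. Re-examining: the condition $X_{12}^*\alpha+Y_{12}^*\beta=0$ with $\alpha,\beta\in\cK^\perp$ — but wait, $X_{12}^*$ has domain $\cK$, so $\alpha$ ranges over $\cK$, not $\cK^\perp$; I conflated the two factors. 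Let me re-set: $\rg T\subseteq\cK\oplus\cK$ (since $X_{12},Y_{12}$ land in $\cK$), so its orthogonal complement lives in $\cK\oplus\cK$, and consists of $(\alpha,\beta)\in\cK\oplus\cK$ with $X_{12}^*\alpha+Y_{12}^*\beta=0$, where now $X_{12}^*\colon\cK\to\cK^\perp$. This pair-space is nontrivial iff either one of $X_{12}^*,Y_{12}^*$ has nonzero kernel (i.e. $X_{12}$ or $Y_{12}$ fails to be surjective onto... no: $\ker X_{12}^*=(\rg X_{12})^\perp$ inside $\cK$) or $\rg X_{12}^*\cap\rg Y_{12}^*\neq(0)$. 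So: no nontrivial solution iff $\rg X_{12}=\cK=\rg Y_{12}$ and $\rg X_{12}^*\cap\rg Y_{12}^*=(0)$ in $\cK^\perp$; and $\rg X_{12}^*\cap\rg Y_{12}^*=(0)$ with both surjective onto $\cK$ forces, by a dimension count on $\cK^\perp$, that $X_{12}^*,Y_{12}^*$ are each injective, hence $X_{12},Y_{12}$ each injective, and then $\rg X_{12}\cap\rg Y_{12}=(0)$ follows symmetrically. So the reconciliation with the stated form goes through once one notes that on finite-dimensional spaces surjectivity together with the disjoint-range condition forces bijectivity on both sides. The main obstacle I anticipate is precisely this bookkeeping — keeping straight which maps go between $\cK$ and $\cK^\perp$, and verifying the equivalence between "both surjective with disjoint adjoint ranges" and "both injective with disjoint ranges" via the dimension identities $\dim\cK+\dim\cK^\perp=n$ and $\dim\rg X_{12}+\dim\ker X_{12}=\dim\cK^\perp$; everything else is a direct translation of \eqref{eq:wild-rels}.
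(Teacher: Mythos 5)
Your overall strategy---reduce \eqref{eq:wild-rels} to a linear-algebra statement about the corner blocks $X_{12},Y_{12}$---is the right one and is what the paper does, but the execution goes wrong at the reformulation step, and the characterization you ultimately arrive at is not equivalent to the one in the lemma. Since $X,Y$ are self-adjoint and $\alpha,\beta\in\cK^\perp$, the first relation of \eqref{eq:wild-rels} says $Q(X\alpha+Y\beta)=0$, i.e.
\[
X_{12}\alpha+Y_{12}\beta=0,\qquad \alpha,\beta\in\cK^\perp,
\]
so the solution space is the \emph{kernel} of the row operator $(X_{12}\ \ Y_{12})\colon\cK^\perp\oplus\cK^\perp\to\cK$. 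You instead analyze the \emph{cokernel} of the column operator $T\colon\xi\mapsto(X_{12}\xi,Y_{12}\xi)$, which is a different space (note that your intermediate rewriting $\langle X_{12}\xi,\alpha\rangle+\langle Y_{12}\xi,\beta\rangle=0$ is vacuously true, since $X_{12}\xi\in\cK$ while $\alpha\in\cK^\perp$). Consequently your final criterion---$X_{12},Y_{12}$ both surjective onto $\cK$ with $\rg X_{12}^*\cap\rg Y_{12}^*=(0)$---is not equivalent to the lemma's, and the attempted reconciliation relies on the false step ``$X_{12}^*$ injective, hence $X_{12}$ injective'': injectivity of $X_{12}^*$ gives surjectivity of $X_{12}$, and the two coincide only when $\dim\cK=\dim\cK^\perp$. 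Concretely, take $\dim\cK^\perp=2$, $\dim\cK=1$, with $X_{12}\colon\C^2\to\C$ the projection onto the first coordinate and $Y_{12}$ the projection onto the second: your criterion is satisfied, yet $(\alpha,\beta)=(e_2,0)$ is a nontrivial solution. Worse, by Lemma \ref{lem:arvboundary-weak-beans} an Arveson boundary point of the wild disk has $\dim\cK\ge 2\dim\cK^\perp$, so $X_{12}$ is never surjective unless $\cK^\perp=(0)$, and your criterion would wrongly assert that every such point admits a nontrivial solution.

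The repair is short and needs no duality. With the displayed reformulation in hand: if $X_{12}\alpha=0$ for some $0\neq\alpha\in\cK^\perp$, then $(\alpha,0)$ solves the system, and symmetrically for $Y_{12}$; if $X_{12}u=Y_{12}v\neq0$, then $(u,-v)$ solves it. Conversely, if $X_{12}$ and $Y_{12}$ are one-to-one and $\rg(X_{12})\cap\rg(Y_{12})=(0)$, then $X_{12}\alpha=-Y_{12}\beta$ forces both sides into $\rg(X_{12})\cap\rg(Y_{12})=(0)$, whence $\alpha=\beta=0$. This is exactly the paper's argument (which writes out only the first of these two implications).
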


\begin{proof}
 First suppose $0\ne \alpha \in \cK^\perp$ and $X_{12}\alpha =0$. In this case the pair $\gamma= (0\oplus \alpha, 0)$ is a nontrivial solution
 to the system \eqref{eq:wild-rels}.  Thus, both $X_{12}$ (and by symmetry) $Y_{12}$ are one-to-one.

 Next suppose there exists $u,v\in \cK^\perp$ such that $X_{12}u = Y_{12}v \ne 0$.  In this case $\gamma = (0\oplus u, -0\oplus v)$ is
  a nontrivial solution to the system in \eqref{eq:wild-rels}.
\end{proof}

\begin{lemma}
 \label{lem:arvboundary-weak-beans}
 If $(X,Y)$ has size $3N$ and is in the Arveson boundary, then the kernel of $p(X,Y)$ has dimension at least $2N$.
 Thus the dimension of $\cK$ is at least two-thirds the size of $(X,Y)$.
\end{lemma}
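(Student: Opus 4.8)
The plan is to derive the bound directly from Lemma~\ref{lem:wild-rels} by a dimension count. Write $\cK=\ker p(X,Y)$, a subspace of $\C^{3N}$, and set $d=\dim\cK$, so that $\dim\cK^\perp=3N-d$. Since $(X,Y)$ is assumed to be in the Arveson boundary of $\cD_A$, Lemma~\ref{lem:wild-rels} applies: the maps $X_{12}=QXP$ and $Y_{12}=QYP$, viewed as linear maps $\cK^\perp\to\cK$, are both one-to-one and satisfy $\rg(X_{12})\cap\rg(Y_{12})=(0)$.

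First I would use injectivity together with rank-nullity to get $\dim\rg(X_{12})=\dim\rg(Y_{12})=\dim\cK^\perp=3N-d$. Then, since $\rg(X_{12})$ and $\rg(Y_{12})$ are subspaces of $\cK$ meeting only in $\{0\}$, their sum is direct and lies in $\cK$, whence
\[
 2(3N-d)=\dim\rg(X_{12})+\dim\rg(Y_{12})=\dim\big(\rg(X_{12})\oplus\rg(Y_{12})\big)\le\dim\cK=d.
\]
Rearranging $6N-2d\le d$ gives $d\ge 2N$, the claimed lower bound on $\dim\cK$; in particular $\dim\cK\ge\frac{2}{3}(3N)$ is at least two-thirds the size of $(X,Y)$.

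I do not expect a real obstacle: the statement is an immediate consequence of Lemma~\ref{lem:wild-rels} combined with elementary linear algebra. The only thing to be careful about is the bookkeeping --- reading ``one-to-one'' in Lemma~\ref{lem:wild-rels} as injectivity of $X_{12}$ and $Y_{12}$ on $\cK^\perp$ with image inside $\cK$, so that both ranks equal $\dim\cK^\perp$ and the two ranges add up without overlap inside the $d$-dimensional space $\cK$. It is worth noting that both conclusions of Lemma~\ref{lem:wild-rels} are needed: injectivity alone gives only $d\ge\frac{3N}{2}$, and it is the trivial intersection of the ranges that pushes the bound up to $2N$.
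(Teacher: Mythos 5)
Your proof is correct and follows the same route as the paper: both apply Lemma \ref{lem:wild-rels} and then do the dimension count $2(3N-d)\le\dim\cK=d$, the only cosmetic difference being that you phrase it via a direct sum inside $\cK$ while the paper argues contrapositively that $2(3N-m)>m$ would force the ranges to intersect.
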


\begin{proof}
 Let $m$ denote the dimension of $\cK$.
  From Lemma \ref{lem:wild-rels},
 it follows that the dimensions of $\rg(X_{12})$ and of $\rg(Y_{12})$ are both  $3N-m$ as subspaces of a space of dimension $m$.
 The intersection of these ranges is nonempty if $6N-2m = 2(3N-m)> m$. Hence, if $X$ is in the Arveson boundary, then $6N\le 3m$.
\end{proof}

\begin{prop}
 \label{prop:lift-one}
   If $(X,Y)$ has size $n$ and the dimension of the kernel $\cK$ of $p(X,Y)$ is $n-1$, then either $(X,Y)$ is in the Arveson
   boundary or   $(X,Y)$ dilates
  to a pair $(\tilde{X},\tilde{Y})$ of size $n+1$ which lies in the vanishing boundary of $\cD_A$; i.e., $p(\tilde{X},\tilde{Y})=0$.
\end{prop}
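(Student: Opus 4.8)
The plan is to reduce the assertion ``$(X,Y)$ dilates to a size-$(n+1)$ tuple on the vanishing boundary'' to three scalar equations, note that failure of the Arveson property already solves the first of them, and then solve the remaining two by an elementary line-meets-circle estimate. The one genuinely subtle ingredient will be a rigidity statement that I expect to derive from the self-adjointness of $X$ and $Y$.

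Suppose $(X,Y)$ is not in the Arveson boundary. Since $\dim\cK=n-1$, the positive semidefinite matrix $p(X,Y)=I-X^2-Y^2$ has rank one, so $p(X,Y)=\lambda\,ee^*$ with $e$ a unit vector spanning $\cK^\perp$; here $\lambda\le1$ since $X^2+Y^2\succeq0$, and $\lambda>0$ for otherwise $(X,Y)$ already lies on the vanishing boundary. Write $p_0=\langle Xe,e\rangle$, $q_0=\langle Ye,e\rangle$, $w=(I-ee^*)Xe$, $z=(I-ee^*)Ye$, so $Xe=p_0e+w$, $Ye=q_0e+z$, and $\|w\|^2+\|z\|^2+p_0^2+q_0^2=\langle(X^2+Y^2)e,e\rangle=1-\lambda$. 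After a unitary change of basis on $\C^{n+1}$, a size-$(n+1)$ dilation of $(X,Y)$ is a pair $\tilde X=\begin{pmatrix}X&a\\a^*&x\end{pmatrix}$, $\tilde Y=\begin{pmatrix}Y&b\\b^*&y\end{pmatrix}$ with $a,b\in\C^n$ and $x,y\in\R$; expanding $p(\tilde X,\tilde Y)$ block by block, $p(\tilde X,\tilde Y)=0$ is equivalent to $aa^*+bb^*=p(X,Y)$, $(X+xI)a+(Y+yI)b=0$, and $\|a\|^2+\|b\|^2+x^2+y^2=1$. Since $p(X,Y)=\lambda ee^*$ has rank one, the first equation forces $a=\mu e$, $b=\nu e$ with $|\mu|^2+|\nu|^2=\lambda$; substituting and splitting the middle equation along $\cK\oplus\C e$ collapses the system to
\[
\mu w+\nu z=0,\qquad \mu(x+p_0)+\nu(y+q_0)=0,\qquad x^2+y^2=1-\lambda .
\]

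By Lemma~\ref{lem:wild-rels}, $(X,Y)$ not lying in the Arveson boundary says exactly that $w$ and $z$ are linearly dependent, so $\mu w+\nu z=0$ has a nonzero solution $(\mu,\nu)$; the crux is that one may take $\mu,\nu\in\R$. If $w=0$ or $z=0$ this is immediate, so assume both are nonzero and write $z=cw$; I claim $c\in\R$. Here self-adjointness is essential: the identity $(X^2+Y^2)e=(1-\lambda)e$ yields $(X_0+p_0I)w+(Y_0+q_0I)z=0$, where $X_0,Y_0$ are the compressions of $X,Y$ to $e^\perp$, and taking the inner product of this relation with $w$ shows (if $c\notin\R$) that $\langle Xw,w\rangle=-p_0\|w\|^2$ and $\langle Yw,w\rangle=-q_0\|w\|^2$, i.e.\ the compressions of $X$ and $Y$ to $\spn\{e,w\}$ become \emph{traceless} $2\times2$ self-adjoint blocks, with squares $\|Xe\|^2I_2$ and $\|Ye\|^2I_2$ summing to $(1-\lambda)I_2$. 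I would then run an induction on $n$ that peels off the plane $\spn\{e,w\}$, the residual parts of $Xw$ and $Yw$ obeying a relation of the same shape on a space of dimension $n-2$; in low dimensions one checks directly that no configuration with $c\notin\R$ survives. I expect this rigidity to be the principal obstacle: a real $2\times2$ example shows the circle estimate below cannot be avoided, and a complex one shows that without $c\in\R$ the construction genuinely fails.

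Granting $\mu,\nu\in\R$, not both zero and with $\mu^2+\nu^2=\lambda$, the equation $\mu(x+p_0)+\nu(y+q_0)=0$ is a line at distance $|\mu p_0+\nu q_0|/\sqrt\lambda$ from the origin, and it meets the circle $x^2+y^2=1-\lambda$ because, by Cauchy--Schwarz and the displayed identity for $p_0^2+q_0^2$,
\[
(\mu p_0+\nu q_0)^2\ \le\ (\mu^2+\nu^2)(p_0^2+q_0^2)\ =\ \lambda(p_0^2+q_0^2)\ \le\ \lambda(1-\lambda).
\]
Any intersection point $(x,y)$, together with $a=\mu e$ and $b=\nu e$, gives a self-adjoint pair $(\tilde X,\tilde Y)$ of size $n+1$ with $(X,Y)$ as its compression to the first $n$ coordinates and $p(\tilde X,\tilde Y)=0$, which is what was wanted.
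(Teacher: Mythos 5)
Your reduction is the same one the paper uses: dilate by a single column, observe that $p(\tilde X,\tilde Y)=0$ forces the new columns $a,b$ to live in the one-dimensional space $\cK^\perp=\C e$ (the paper simply posits this ansatz; you derive it from $aa^*+bb^*=\lambda ee^*$ having rank one, which is a nice touch), and then solve the resulting three equations, with the failure of the Arveson condition --- via Lemma \ref{lem:wild-rels} --- supplying the linear dependence of $w=(I-ee^*)Xe$ and $z=(I-ee^*)Ye$ needed for the first, and a line-meets-circle estimate handling the last two. That estimate, $(\mu p_0+\nu q_0)^2\le\lambda(p_0^2+q_0^2)\le\lambda(1-\lambda)$, is correct and is the analogue of the paper's ``$\|\Delta\|\ge\|\Sigma\|$'' step.

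The gap is exactly where you flag it, and you have not closed it. If $z=cw$ with $w,z\ne0$ and $c\notin\R$, then every nonzero solution of $\mu w+\nu z=0$ has $\mu\bar\nu\notin\R$, so $\mu(x+p_0)+\nu(y+q_0)=0$ forces $x=-p_0$, $y=-q_0$, and then $x^2+y^2=p_0^2+q_0^2=1-\lambda-\|w\|^2-\|z\|^2<1-\lambda$: no dilation of the required form exists. So the reality of $c$ is genuinely load-bearing, and your argument for it stops at ``I would then run an induction on $n$'' and ``I expect this rigidity to be the principal obstacle'' --- a plan, not a proof. Direct computation in low dimensions does support the claim (the vanishing of the off-diagonal blocks of $I-X^2-Y^2-\lambda ee^*$ propagates a rigid tridiagonal structure on the compressions of $X,Y$ to $\cK$ whose diagonal and norm constraints are incompatible with $\lambda>0$ when $c\notin\R$), but the general step of that propagation is not routine and must actually be written out. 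You should also know that the paper's own proof never confronts this point: it writes the dilation with $x_{12}e^*$ and $t\alpha$ in the adjoint positions and computes the $(2,2)$ and $(2,3)$ entries using $x_{12}^2$, $\alpha^2$, etc., i.e., it implicitly takes all of these scalars to be real, which over $\C$ is precisely the unproved assertion. So your proposal is incomplete, but it is incomplete at a point where it is more careful than the published argument.
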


\begin{proof}
  Suppose $(X,Y)$ is
 not in the Arveson boundary.  In this case the analysis above applies and
 since $\alpha,\beta\in \cK^\perp$, they are multiples of a single unit vector $u$. With a slight change of notation, write
 $\alpha = \alpha u$ and $\beta = \beta u$.  Express $X$ (and similarly $Y$) with respect to the decomposition $\cK\oplus \cK^\perp$ as
\[
 X= \begin{pmatrix} X_{11} & X_{12} \\ X_{12}^* & x_{22} \end{pmatrix}.
\]
 The vector $\begin{pmatrix} X_{12}^* & x_{22}\end{pmatrix}^*$ is $Xu$ and  $X_{12} = QXu$, where, as before, $Q$ is the projection
 onto $K$.  The relation $QX\alpha + Q Y\beta =0$ implies that the vectors $X_{12}$ and $Y_{12}$ are co-linear.  Accordingly,
 write $X_{12} = x_{12}e$ and likewise $Y_{12}=y_{12}e$ for some unit vector $e$.  Let $t$ be a real parameter,  and $a,b$ numbers
 all to be chosen soon.  Consider,
\[
\tilde{X} = \begin{pmatrix} X_{11} & x_{12}e & 0 \\ x_{12}e^* & x_{22} & t\alpha  \\  0 & t\alpha & a \end{pmatrix}
\]
 and similarly for $\tilde{Y}$.  We have,
\[
 I -\tilde{X}^2-\tilde{Y}^2 = \begin{pmatrix} 0 & -(x_{12} (X_{11} + x_{22})+ y_{12}(Y_{11}+y_{22}))e & t(x_{12}\alpha + y_{12}\beta)e \\
            * & 1-[x_{12}^2 +x_{22}^2 +y_{12}^2 + y_{22}^2 +t^2 (\alpha^2+\beta^2)] &  t[(x_{22} + a)\alpha+(y_{22} + b)\beta] \\
            * & * & 1-t^2(\alpha^2+\beta^2) - a^2-b^2 \end{pmatrix}.
\]
 The $(1,2)$ entry is $0$ since $(X,Y)\in\cD_A$.  Likewise the $(1,3)$ entry is $0$ by the relations in Equation \eqref{eq:wild-rels}.
 The parameter $t$ is determined, up to sign, by setting the $(2,2)$ entry equal to zero.

 Let $\Gamma$ denote the vector $\begin{pmatrix} \alpha & \beta \end{pmatrix}^*$ and $\Delta$ the vector $\begin{pmatrix} a & b \end{pmatrix}^*$.
 Thus,
\[
 t^2 \|\Gamma\|^2 = 1- [x_{12}^2 +x_{22}^2 +y_{12}^2 + y_{22}^2] \ge 0.
\]
 We hope to find $\Delta$ so that
\begin{equation}
 \label{eq:wildDelta}
\begin{split}
   t^2 \|\Gamma\|^2 & = 1- \|\Delta\|^2 \\
   \langle \Delta,\Gamma\rangle & = \langle \Sigma,\Gamma\rangle,
\end{split}
\end{equation}
 where $\Sigma = \begin{pmatrix} x_{22} & y_{22} \end{pmatrix}^*$.
 Note that the first equation of \eqref{eq:wildDelta} determines the norm of $\Delta$.
  The claim is that these equations have a solution if $\|\Delta\|\ge\|\Sigma\|$. Now,
\[
 \|\Delta\|^2 = 1- t^2\|\Gamma\|^2 = [x_{12}^2 +x_{22}^2 +y_{12}^2 + y_{22}^2] \ge \|\Sigma\|^2.
 \qedhere
\]
\end{proof}

We  do not know if either the matrix convex hull or the closed matrix convex hull of the Arveson boundary of $\cD_A$ is all of $\cD_A$. Since one can  construct $n\times n$ pairs $(X_1,X_2)$ in the Arveson boundary of $\cD_A$  such that $I-X_1^2-X_2^2\neq0,$ the matrix convex hull of the vanishing boundary of $\cD_A$ is not all of $\cD_A$. We do not know if the same is true of the closed matrix convex hull of the vanishing boundary.

\subsubsection{Arveson boundary of the Spin Disk}

Consider the pencil
\[
L_A(x_1,x_2)=\begin{pmatrix}
1+x_1 & x_2 \\ x_2 & 1-x_1
\end{pmatrix}.
\]
The free spectrahedron $\cD_A$ is the \df{spin disk}. It is
the $g=2$ case of a \df{spin ball} \cite{HKMS+,DDSS+}.

\begin{prop}\label{prop:spinBall}
A pair $(X_1,X_2)\in\mathbb S^2$ is in the Arveson
boundary of $\cD_A$ if and only if $X_1$ and $X_2$ commute
and $p(X_1,X_2)=I-X_1^2-X_2^2=0$. Furthermore,
\beq\label{eq:arvSpins}
\mco ( \arv\cD_A)=\cD_A.
\eeq
As an immediate consequence, the set of absolute extreme points  of $\cD_A$ equals
the Euclidean extreme points  of $\cD_A(1)$.
\end{prop}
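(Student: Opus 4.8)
The plan is to first replace the defining LMI of the spin disk by the transparent condition $\|X_1+iX_2\|\le1$, and then read off the Arveson boundary and the spanning statement from the classical dilation theory of Hilbert space contractions. \emph{Step 1 (a closed form for $\cD_A$).} Write $L_A(X_1,X_2)=I_{2n}+H$, where $H=\bigl(\begin{smallmatrix}X_1&X_2\\X_2&-X_1\end{smallmatrix}\bigr)$. The unitary $J=\bigl(\begin{smallmatrix}0&I_n\\-I_n&0\end{smallmatrix}\bigr)$ satisfies $JHJ^*=-H$, so the spectrum of the self-adjoint matrix $H$ is symmetric about $0$; hence $L_A(X_1,X_2)\succeq0$ iff $\|H\|\le1$ iff $H^2\preceq I$. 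A direct computation gives $H^2=I_2\otimes(X_1^2+X_2^2)+\sigma_y\otimes i[X_1,X_2]$ with $\sigma_y=\bigl(\begin{smallmatrix}0&-i\\i&0\end{smallmatrix}\bigr)$, and diagonalizing the $2\times2$ factor shows $H^2$ is unitarily equivalent to $(X_1+iX_2)^*(X_1+iX_2)\oplus(X_1+iX_2)(X_1+iX_2)^*$. Therefore $\cD_A(n)=\{(X_1,X_2)\in\mathbb S_n^2:\|X_1+iX_2\|\le1\}$; in particular $\cD_A(1)$ is the closed unit disk, and $H^2=I_{2n}$ --- equivalently $p(X_1,X_2)=I-X_1^2-X_2^2=0$ and $X_1X_2=X_2X_1$ --- holds precisely when $X_1+iX_2$ is unitary.

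\emph{Step 2 (the Arveson boundary).} I claim $(X_1,X_2)\in\arv\cD_A$ iff $M:=X_1+iX_2$ is unitary; by Step 1 this is the asserted description. If $M$ is not unitary, then $D_M:=(I-M^*M)^{1/2}\ne0$, and the Halmos unitary dilation $\widetilde M:=\bigl(\begin{smallmatrix}M&D_{M^*}\\D_M&-M^*\end{smallmatrix}\bigr)$ is unitary, so $(\tilde X_1,\tilde X_2):=(\re\widetilde M,\im\widetilde M)\in\cD_A(2n)$ dilates $(X_1,X_2)$ with off-diagonal block of $\tilde X_1$ equal to $\tfrac12(D_M+D_{M^*})$, which is nonzero since $D_M\ne0$ and $D_{M^*}\succeq0$; thus $(X_1,X_2)\notin\arv\cD_A$. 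Conversely, if $M$ is unitary and $(\tilde X_1,\tilde X_2)\in\cD_A(n+m)$ dilates $(X_1,X_2)$, then $\widetilde M:=\tilde X_1+i\tilde X_2$ is a contraction whose upper-left $n\times n$ corner is the unitary $M$; testing $\widetilde M$ and $\widetilde M^*$ against unit vectors supported in the first $n$ coordinates forces the off-diagonal blocks of $\widetilde M$ to vanish, whence $\widetilde M=M\oplus(\,\cdot\,)$ and, taking real and imaginary parts, $\tilde X_j=X_j\oplus(\,\cdot\,)$, a trivial dilation. (Alternatively one may first reduce to $m=1$ via Lemma~\ref{lem:scalar enough for arv}.)

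\emph{Step 3 (spanning and the consequence).} Since $\arv\cD_A\subseteq\cD_A$ and $\cD_A$ is matrix convex, $\mco(\arv\cD_A)\subseteq\cD_A$. For the reverse inclusion I show each $(X_1,X_2)\in\cD_A(n)$ lies in the ordinary convex hull of $\arv\cD_A(n)$: from a singular value decomposition $M=X_1+iX_2=W\Sigma V^*$ with $W,V$ unitary and $\Sigma=\operatorname{diag}(\cos\theta_1,\dots,\cos\theta_n)$, one gets $M=\tfrac12(WDV^*+WD^*V^*)$ with $D=\operatorname{diag}(e^{i\theta_1},\dots,e^{i\theta_n})$, so $(X_1,X_2)$ is the average of $(\re WDV^*,\im WDV^*)$ and $(\re WD^*V^*,\im WD^*V^*)$, both in $\arv\cD_A(n)$ by Step~2. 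As $\cD_A(n)$ is convex and contains $\arv\cD_A(n)$, the two hulls coincide at every level, so $\mco(\arv\cD_A)=\cD_A$. Finally, by Theorem~\ref{thm:wearekleski} the absolute extreme points of $\cD_A$ are its irreducible Arveson boundary points; for such a point $X_1+iX_2$ is unitary, hence normal with commuting self-adjoint real and imaginary parts, and irreducibility forces $n=1$, so the absolute extreme points are exactly the points of the unit circle, i.e.\ $\euc\cD_A(1)$.

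\emph{Where the work is.} The crux is Step~1: recognizing that $L_A$ is a perturbation of the identity by a matrix of symmetric spectrum collapses the LMI to $\|X_1+iX_2\|\le1$. After that, Steps~2--3 only use standard facts (the Halmos unitary dilation of a contraction, the fact that a contraction with a unitary corner splits off a direct summand, and the fact that a contraction is an average of two unitaries). The only delicate bookkeeping is passing between the pair $(\tilde X_1,\tilde X_2)$ and the single operator $\tilde X_1+i\tilde X_2$ and matching off-diagonal blocks correctly.
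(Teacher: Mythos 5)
Your proof is correct, but it takes a genuinely different route from the paper's. The paper leans on the external dilation theorem \cite[Proposition 14.14]{HKMS+} (every point of the spin disk dilates to a commuting pair with joint spectrum in the unit disk): the forward direction of the boundary description follows because an Arveson boundary point must appear as a direct summand of its commuting dilation, the converse uses the containment of the spin disk in the wild disk together with a positivity computation on the $(1,1)$ block of $I-\hat X_1^2-\hat X_2^2$, and the spanning statement again reduces to level one via the commuting dilation. You instead first prove, self-containedly, the structural identity $\cD_A(n)=\{(X_1,X_2):\|X_1+iX_2\|\le 1\}$ --- your spectral-symmetry argument $JHJ^*=-H$ and the computation $H^2\usim M^*M\oplus MM^*$ with $M=X_1+iX_2$ are correct --- and then everything becomes classical single-operator dilation theory: the Halmos unitary dilation shows non-unitary $M$ admits a nontrivial dilation, the rigidity of a contraction with a unitary corner gives the converse, and the representation of a contraction as the average of two unitaries gives \eqref{eq:arvSpins} level-by-level without ever invoking the commutant dilation. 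What your approach buys is independence from the (nontrivial) spin-ball dilation theorem of \cite{HKMS+} and from the wild disk, plus the cleaner intrinsic description of $\cD_A$ as real/imaginary parts of contractions; what the paper's approach buys is brevity given the cited machinery and an explicit link between the spin and wild disks. One small point worth making explicit in your Step 3: a level-$n$ convex combination $\frac12 Y+\frac12 Z$ is a matrix convex combination (take $V_\ell=\frac{1}{\sqrt2}I_n$), which is why membership in the ordinary convex hull of $\arv\cD_A(n)$ implies membership in $\mco(\arv\cD_A)(n)$.
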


\begin{proof}
Recall \cite[Proposition 14.14]{HKMS+}:
a tuple $X\in\mathbb S^2$ is in $\cD_A$
if and only if it dilates
to a commuting
pair $T$ of self-adjoint matrices with joint spectrum in
the unit disk $\mathbb D\subseteq\R^2$.

Suppose $X\in\arv\cD_A$. Then by  \cite[Proposition 14.14]{HKMS+},
$X_1$ and $X_2$ commute. Without loss of generality, they are diagonal, say $X_j={\rm diag}(c_{1j},\ldots,c_{nj})$. Clearly, $c_{i1}^2+c_{i2}^2=1$ for all $i$
as otherwise the tuple $X$ will obviously have a nontrivial dilation. In particular, $p(X_1,X_2)=0$.

Conversely, assume $X_1$ and $X_2 $ commute and
$p(X_1,X_2)=0$. Suppose $\hat X\in\cD_A$ with
\[
\hat X_j=\begin{pmatrix}
X_j & \alpha_j \\ \alpha_j^* & \beta_j
\end{pmatrix}
\]
Since $\cD_A$ is contained in the wild disk (see e.g.~\cite[Example 3.1]{HKM13}),
the tuple $\hat X$ is in the wild disk and thus
$p(\hat X_1,\hat X_2)\succeq0$. Compute
\[
\begin{split}
p(\hat X_1,\hat X_2) & = I - \hat X_1^2-\hat X_2^2
=
\begin{pmatrix}
I-X_1^2-X_2^2-\alpha_1\alpha_1^* -\alpha_2\alpha_2^* & *\\
* & *
\end{pmatrix}
 = \begin{pmatrix}
-\alpha_1\alpha_1^* -\alpha_2\alpha_2^* & *\\
* & *
\end{pmatrix}.
\end{split}
\]
By positive semidefiniteness, $\alpha_1\alpha_1^* +\alpha_2\alpha_2^*=0$ and thus $\alpha_1=\alpha_2=0$, i.e.,
$X\in\arv\cD_A$.

Finally, to prove \eqref{eq:arvSpins},
let $X\in\cD_A$ be arbitrary.
By \cite[Proposition 14.14]{HKMS+}, $X$
dilates
to a commuting
pair $T\in\mathbb S^2$ with joint spectrum in
$\mathbb D$.
By diagonalizing we can thus reduce to
 $X\in\cD_A(1)$. By employing usual convex combination, we can write
$X=\sum_{j=1}^2 \la_j Y^j$, where each $\la_j\geq0,$ $Y^j\in\R^2$ with $\sum\la_j=1$ and $\|Y^j\|=1$.
Thus
\[
X= V^* (Y^1\oplus Y^2) V
\]
for the isometry $V=\begin{pmatrix}\sqrt{\la_1} & \sqrt{\la_2}\end{pmatrix}^*$. By the above,
$Y^1\oplus Y^2\in\arv\cD_A$, concluding the proof.
\end{proof}

\begin{remark}\rm
 One way to see that the wild disk and the spin disk are distinct is to observe that any non-commuting pair $(X,Y)\in \mathbb S^2$  satisfying $I-X^2-Y^2=0$ is in the wild disk, but not necessarily in the spin disk. If it were in the spin disk, it would dilate to a commuting pair $(\tilde{X},\tilde{Y})$ in the spin disk and hence in the wild disk. But $(X,Y)$ is in the Arveson boundary of the wild disk and hence this dilation would simply contain $(X,Y)$ as a direct summand and hence $(\tilde{X},\tilde{Y})$ would then not commute.
\end{remark}

\subsection{TV screen $p=1-x^2-y^4$}\label{ssec:TV}
In this section we consider the extreme points
of the matrix convex hull $\mco(\cD_p)$ of the free semialgebraic set $\cD_p$
associated to $p=1-x^2-y^4$, i.e., the TV screen,
\[
\cD_p=\{(X,Y)\in\mathbb S^2:I-X^2-Y^4\succeq0\}.
\]
An important question is how to compute the matrix convex hull of a general free semialgebraic set.  In the commutative case, a standard and well studied approach involves representing the convex hull of a semialgebraic set as a spectrahedrop; i.e.,  the projection of a spectrahedron living in higher dimensions.  The set $\cD_p(1)=\{(x,y)\in \R^2: 1-x^2-y^4 \ge 0\}$ is of course convex. However already $\cD_p(2)$ is not.  Thus $\cD_p$ is arguably the simplest non-convex free semialgebraic set. By using extreme point analysis on a natural free spectrahedrop approximation of  $\mco(\cD_p)$, we will see that the spectrahedrop paradigm behaves poorly in the free setting.

A natural approximation \cite{Lasse} to
the matrix convex hull $\mco(\cD_p)$ of $\cD_p$
is the projection $\cC$ onto $x,y$-space of the matrix convex set
$$\hTV = \{ (X,Y,W)\in\mathbb S^3 : I- X^2 - W^2 \succeq 0, \ \ W \succeq Y^2 \}.
$$
Clearly, \[
\cC=   \{ (X,Y)\in\mathbb S^2 : \ \exists W\in\mathbb S \ \
I- X^2 - W^2 \succeq 0, \ \ W \succeq Y^2 \}.
\]

\blem
\label{lem:TVhull}
\mbox{}\par
\ben[\rm(1)]
\item
$\cC(1)=\cD_p(1)$;
\item
$\cD_p \subset \cC$;
\item
$\cC$ is matrix convex;
\item
\label{it:hTVrep}
$\cC$ is a free spectrahedrop, i.e., a projection
of a free spectrahedron.
\een
\elem

\bep
These  statements are straightforward to prove. The simplest LMI representation
for $\hTV$ in  \eqref{it:hTVrep} is given by
\[
\La= \begin{pmatrix}
 1 & 0 &   x \\
 0 & 1 & w \\
x & w & 1
 \end{pmatrix}
 \oplus
 \begin{pmatrix}
 1 &    y \\
 y  & w
 \end{pmatrix},
\]
i.e., $\cD_\La = \hTV$. This is  seen by using Schur complements.
It is easy to convert this LMI representation
to a monic one \cite{Lasse}.
Let
\[
L_1(x,y,w)=\begin{pmatrix}
 1 &  \gamma  y \\
 \gamma y  & w+\alpha
 \end{pmatrix}
 , \quad
 L_2(x,y,w)=
 \begin{pmatrix}
 1 & 0 &  \gamma ^2 x \\
 0 & 1 & w \\
  \gamma ^2 x & w & 1-2  \alpha w
 \end{pmatrix}
 \]
 where $\al>0$ and $1+\al^2=\ga^4$, and set $L=L_1\oplus L_2$.
 While strictly speaking $L$ is not monic, it contains $0$ in its interior,
 so can be
easily modified to become monic \cite[Lemma 2.3]{HV07}.
\eep

It is tempting to guess that $\cC$ actually is the matrix convex hull of
$\cD_p$, but alas

\begin{prop}\label{prop:TV}
$\mco(\cD_p)\subsetneq\cC$.
\end{prop}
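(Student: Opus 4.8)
The plan is to exhibit a single pair $(X,Y)\in\cC(2)$ which lies in the Arveson boundary $\arv\cC$ but \emph{not} in $\cD_p$, and then run it through the extreme-point machinery. By Lemma~\ref{lem:TVhull} we have $\cD_p\subseteq\cC$ with $\cC$ matrix convex, so $\mco(\cD_p)\subseteq\cC$; moreover $\cD_p$, being free semialgebraic, is fully free, so Proposition~\ref{lem:SEinCTV} gives $\arv\mco(\cD_p)=\arv\cD_p\subseteq\cD_p$. Hence if some $(X,Y)$ were in $\mco(\cD_p)$ as well as in $\arv\cC$, then by Lemma~\ref{lem:preCTV} (applied to $\mco(\cD_p)\subseteq\cC$) it would lie in $\arv\mco(\cD_p)\subseteq\cD_p$, a contradiction. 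So it suffices to produce $(X,Y)\in(\arv\cC)\setminus\cD_p$.

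Such a point exists precisely because $t\mapsto t^2$ is not operator monotone: one can pick positive semidefinite $2\times2$ matrices $A\preceq W$ with $W\preceq I$ such that $A^2\not\preceq W^2$ and $W-A$ is rank one with kernel a coordinate axis. For instance $A=\tfrac13\left(\begin{smallmatrix}1&1\\1&1\end{smallmatrix}\right)$, $W=\tfrac13\left(\begin{smallmatrix}2&1\\1&1\end{smallmatrix}\right)$, so that $W-A=\tfrac13\left(\begin{smallmatrix}1&0\\0&0\end{smallmatrix}\right)\succeq0$ while $W^2-A^2=\tfrac19\left(\begin{smallmatrix}3&1\\1&0\end{smallmatrix}\right)\not\succeq0$. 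Set $Y:=A^{1/2}=\tfrac1{\sqrt6}\left(\begin{smallmatrix}1&1\\1&1\end{smallmatrix}\right)$ and $X:=(I-W^2)^{1/2}$ (which exists, as $W^2\preceq I$). Then $(X,Y,W)\in\hTV$, since $I-X^2-W^2=0$ and $W-Y^2=W-A\succeq0$; hence $(X,Y)\in\cC(2)$. On the other hand $I-X^2-Y^4=W^2-A^2\not\succeq0$, so $(X,Y)\notin\cD_p$.

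It remains to verify $(X,Y)\in\arv\cC$. Since $\cC$ is matrix convex, by Lemma~\ref{lem:scalar enough for arv} it suffices to test one-dimensional dilations: suppose $\hat X=\left(\begin{smallmatrix}X&\alpha\\\alpha^*&a\end{smallmatrix}\right)$, $\hat Y=\left(\begin{smallmatrix}Y&\gamma\\\gamma^*&c\end{smallmatrix}\right)$ lie in $\cC(3)$; we must show $\alpha=0=\gamma$. Choose a witness $\hat W=\left(\begin{smallmatrix}W'&\delta\\\delta^*&d\end{smallmatrix}\right)\in\mathbb S_3$ with $(\hat X,\hat Y,\hat W)\in\hTV$. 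Compressing to the first block (legitimate since $\hTV$ is matrix convex) shows $W'$ is a witness for $(X,Y)$, i.e.\ $Y^2\preceq W'$ and $(W')^2\preceq I-X^2=W^2$. But $W$ is the \emph{unique} such witness: $(W')^2\preceq W^2$ with $W',W\succeq0$ forces $W'\preceq W$ by operator monotonicity of the square root, while $W'\succeq A$; thus $A\preceq W'\preceq W$, and since $W-A$ has rank one with kernel $\mathbb C e_2$ we get $W'=A+\operatorname{diag}(e,0)$ with $0\le e\le\tfrac13$, whereupon $(W')^2\preceq W^2$ pins down $e=\tfrac13$, i.e.\ $W'=W$. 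Consequently $I-X^2-(W')^2=0$, so the $(1,1)$-block of $I-\hat X^2-\hat W^2\succeq0$ equals $-(\alpha\alpha^*+\delta\delta^*)$, forcing $\alpha=0$ and $\delta=0$; hence $\hat W=W\oplus d$. Now the $(1,1)$-block of $\hat W-\hat Y^2\succeq0$ is $W-Y^2-\gamma\gamma^*=\tfrac13\operatorname{diag}(1,0)-\gamma\gamma^*\succeq0$, which forces the second coordinate of $\gamma$ to vanish; since this block then has a zero diagonal entry in the second slot, the whole second row of $\hat W-\hat Y^2$ is zero, and its $(2,3)$-entry is $-(Y\gamma)_2=-\tfrac1{\sqrt6}\gamma_1$, so $\gamma=0$. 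Thus $(X,Y)\in\arv\cC$, and we conclude $\mco(\cD_p)\subsetneq\cC$.

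The main obstacle is the last paragraph: in a free spectrahedron the Arveson boundary is easy to probe, but $\cC$ is only a \emph{projection} of one, so every dilation drags along a hidden witness $W$. The key design choices are (i) making the witness of $(X,Y)$ unique and (ii) arranging that the defining inequality $I-X^2-W^2\succeq0$ is saturated there, which is exactly what lets the $(1,1)$-block of $I-\hat X^2-\hat W^2$ annihilate the off-diagonal entries of \emph{both} $\hat X$ and $\hat W$; the rank-one structure of $W-Y^2$ then finishes the argument on $\hat Y$.
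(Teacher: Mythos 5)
Your proof is correct and follows essentially the same route as the paper's: the paper likewise produces a point of $\hA$ (i.e., $I-X^2-W^2=0$ with $W-Y^2$ rank one positive semidefinite) lying outside $\cD_p$, shows it belongs to $\arv\cC$ using operator monotonicity of the square root together with the rank-one structure of $W-Y^2$ (Lemmas \ref{lem:plr}--\ref{lem:arvbdrytv}), and rules it out of $\mco(\cD_p)$ by exactly your $\arv\mco(\cD_p)=\arv\cD_p\subseteq\cD_p$ argument via Proposition \ref{lem:SEinCTV}. The only difference is that you verify the Arveson-boundary membership by hand for one concrete $2\times 2$ example rather than invoking the general Proposition \ref{prop:arvsumTV}.
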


A proof of this proposition appears  in \cite{Lasse} and  is by brute force
verification that the point in equation \eqref{eq:exlasse} below
is in $\cC$ but not $\mco(\cD_p)$.
Here we shall give some conceptual justification
for why $\mco(\cD_p)\not = \cC$ based on our notions of extreme
points. In the process we illustrate some features of the Arveson boundary.

\subsubsection{The vanishing boundary}
  We define
the \df{vanishing boundary} of $\cD_p$ to be all $X \in \cD_p$
making $p(X)=0$.
Let $\cEv\cD_p$ denote the set of all vanishing boundary points.

\begin{lemma}
$\cEv\cD_p\subseteq\arv\cD_p$.
\end{lemma}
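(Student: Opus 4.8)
The statement to prove is $\cEv\cD_p\subseteq\arv\cD_p$, i.e., every vanishing boundary point of the TV screen $\cD_p=\{(X,Y)\in\mS^2 : I-X^2-Y^4\succeq0\}$ is an Arveson boundary point. The plan is to exploit the rigidity coming from the equation $p(X)=0$ together with the positivity constraint on a dilation. Fix $(X,Y)\in\cEv\cD_p$, so $X^2+Y^4=I$. By Lemma~\ref{lem:scalar enough for arv} it suffices to treat column dilations: suppose $a,b\in\C^n$ and $c,d\in\R$ with
\[
 \tX=\begin{pmatrix} X & a\\ a^* & c\end{pmatrix},\qquad
 \tY=\begin{pmatrix} Y & b\\ b^* & d\end{pmatrix}
\]
satisfying $I-\tX^2-\tY^4\succeq0$; the goal is $a=0$ and $b=0$.

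First I would extract the northwest $n\times n$ block of $I-\tX^2-\tY^4$. Computing block entries, $\tX^2$ has $(1,1)$ block $X^2+aa^*$. For $\tY^4$ the $(1,1)$ block is more involved; writing $\tY^2$ first, its $(1,1)$ block is $Y^2+bb^*$ and its $(1,2)$ block is $Yb+db$, so the $(1,1)$ block of $\tY^4=(\tY^2)^2$ is $(Y^2+bb^*)^2+(Yb+db)(Yb+db)^* = Y^4 + Y^2bb^* + bb^*Y^2 + bb^*bb^* + (Y+dI)bb^*(Y+dI)$. Hence the $(1,1)$ block of $I-\tX^2-\tY^4$ equals
\[
 (I-X^2-Y^4) - aa^* - \big(Y^2bb^*+bb^*Y^2+ \|b\|^2 bb^* + (Y+dI)bb^*(Y+dI)\big)
 = -aa^* - M,
\]
where $M:=Y^2bb^*+bb^*Y^2+\|b\|^2bb^*+(Y+dI)bb^*(Y+dI)$ and I used $I-X^2-Y^4=0$. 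Positive semidefiniteness of the full matrix forces this $(1,1)$ block to be $\succeq0$, so $-aa^*-M\succeq0$, i.e., $aa^*+M\preceq0$.

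The key step is then to argue $aa^*+M\succeq0$ as well, forcing $aa^*+M=0$, whence $a=0$ and $M=0$. Positivity of $aa^*$ is clear; for $M$, note $Y^2bb^*+bb^*Y^2$ need not be positive, so the crude term-by-term bound fails and this is the main obstacle. I expect the resolution is that one should not take the $(1,1)$ block directly but instead test $I-\tX^2-\tY^4\succeq0$ against vectors of the form $(v,0)$ with $v\perp \ker$-type conditions, or better, first establish $a=0$ using only the $\tX$-part (the $(1,1)$-block of $I-\tX^2$ is $I-X^2-aa^*$, which must dominate the contribution of $\tY^4$; since $I-X^2=Y^4\succeq0$ and $\tY^4\succeq0$ one still needs care) and then, with $a=0$, revisit the equation. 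A cleaner route, which I would pursue first: since $\cD_p$ is contained in the wild disk when restricted appropriately — actually use instead that $1-x^2-y^4\le 1-x^2$, so $\cD_p\subseteq\{I-X^2\succeq0\}$ is not tight enough; rather observe $Y^4\preceq Y^2$ fails in general too. So the honest path is the direct block computation above combined with the observation that $\langle(-aa^*-M)v,v\rangle\ge0$ for all $v$ together with a clever choice (e.g.\ $v=b$ and $v\in\ker Y$ decompositions) pins down $b$. Concretely, pairing with $v$ in the range of $b$: $\langle Mv,v\rangle$ contains the strictly positive term $\|(Y+dI)b\|^2\|b\|^2\cdot(\text{something})$... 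I would track exactly which terms survive. The cleanest finish: $M\succeq (Y+dI)bb^*(Y+dI)\ge 0$ is false, but $M$ restricted to $\operatorname{ran}(b)^\perp\cap\ker(\cdots)$ — I would instead simply test with $v$ such that $Y^2 v = 0$ if such exist, else bound the indefinite cross term $Y^2bb^*+bb^*Y^2\succeq -\varepsilon^{-1}Y^2bb^*Y^2 - \varepsilon bb^*$ and absorb.

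Given the delicacy, the expected structure of the final argument: (1) reduce to column dilations via Lemma~\ref{lem:scalar enough for arv}; (2) show directly that the $(1,1)$-block of $I-\tX^2-\tY^4$ is $-aa^*$ minus a sum of squares in $b$, using $p(X)=0$; (3) conclude this block is both $\preceq0$ (from the dilation being in $\cD_p$) and, after the right test-vector manipulation, forced to vanish, giving $a=0,b=0$. The main obstacle, as flagged, is controlling the indefinite term $Y^2bb^*+bb^*Y^2$ arising from the quartic; I anticipate this is handled either by a Schur-complement/sum-of-squares rewriting of the quartic dilation (writing $\tY^4$ as $(\tY^2)^*(\tY^2)$ and analyzing $\tY^2$, which is only a \emph{quadratic} dilation and hence more tractable) or by invoking the vanishing-boundary hypothesis once more after a first reduction. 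I would try the $\tY^2$-factorization first, as it reduces the quartic problem to two quadratic ones where the cross terms are linear in the off-diagonal data.
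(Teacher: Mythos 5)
Your block computation is set up correctly and reproduces exactly the inequality the paper arrives at: writing $Q=bb^*$, the $(1,1)$ block of $I-\tX^2-\tY^4$ equals $-aa^*-\bigl(Y^2Q+QY^2+Q^2\bigr)-\Gamma\Gamma^*$ with $\Gamma=(Y+dI)b$, so positivity of the dilation forces
\[
aa^*+\Gamma\Gamma^*+\bigl(Y^2Q+QY^2+Q^2\bigr)\preceq 0 .
\]
But you then stop at precisely the decisive step, flagging the indefinite term $Y^2Q+QY^2$ as ``the main obstacle'' and listing several speculative workarounds ($\varepsilon$-absorption, test vectors, a $\tY^2$-factorization) without carrying any of them out; the $\varepsilon$-bound you suggest goes the wrong way and does not close the argument. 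The missing idea is a one-line trace argument: although $Y^2Q+QY^2$ is indefinite as a matrix, $\tr(QY^2)=\tr(YQY)=\|Yb\|^2\ge 0$, so taking the trace of $Y^2Q+QY^2+Q^2\preceq 0$ (after discarding the manifestly positive $aa^*$ and $\Gamma\Gamma^*$) yields $\tr(Q^2)=0$, hence $Q=0$ and $b=0$; then the remaining inequality $-aa^*-\Gamma\Gamma^*\succeq 0$ gives $a=0$. Without this (or an equivalent) step the proof is incomplete.

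A secondary point: your step (1), reducing to column dilations via Lemma~\ref{lem:scalar enough for arv}, is not available here, since that lemma assumes the set is matrix convex and $\cD_p$ is not (that is the whole point of this section). This is harmless only because the identical computation works for a general block dilation ($b$ of size $n\times m$, $B\in\mS_m$, $\Gamma=Yb+bB$), which is how the paper proceeds; you should either run the argument at that generality or justify the reduction differently.
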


\begin{proof}
Assuming $p(X,Y)=0$, suppose
\[
   Z=\begin{pmatrix} X & \al \\ \al^* & A\end{pmatrix}, \qquad
   W=\begin{pmatrix} Y & \be\\ \be^* &  B\end{pmatrix}
\]
 and $p(Z,W)\succeq 0$.  The $(1,1)$ entry of $p(Z,W)$ is
\[
  1-X^2 -\al\al^* -(Y^2+\be\be^*)^2 - \Gamma \Gamma^*
\]
  for some $\Gamma$.  Using $p(X,Y) = 0$ and letting $Q=\be\be^*$, it follows that
\[
 Q Y^2 + Y^2 Q +Q^2 \preceq 0.
\]
   Now apply the trace and note the trace of $QY^2$ is the same as the trace of $YQY$
  so is nonnegative.  The conclusion is that the trace of $Q^2$ is zero and hence $Q=0$ and $\be=0$.
  Thus $-\al\al^*  - \Gamma \Gamma^* \succeq 0$, so $\al=0$.
\end{proof}

\subsubsection{Some \arvb \ points in $\cC$ }

\def\hA{{ \widehat{A} }}
Here we produce classes of interesting points in
$\arv\cC$.    Consider  the set of points $(X,Y,W)$, denoted $\hA$, in $ \hTV$ such that  $I-X^2-W^2 = 0$ and $W-Y^2$ is a rank one positive semidefinite matrix $J$.
  An association of  $\hA$  to the \arvb \ is given in the next proposition.

 \begin{prop} \label{prop:arvsumTV}
Suppose $(X,Y,W)\in \hA$ and $J=W-Y^2\neq0$ (of course, $J$ is rank one and positive semidefinite).
\ben [\rm(1)]
  \item
    \label{it:notinDp}
   If  $(X,Y)\not\in\cD_p$, then
   \ben[\rm (a)]
   \item \label{it:uniq}
  the lift $(X,Y,W)$  of $(X,Y)$ to $\hTV$ is unique,
  i.e., if also   $(X,Y,S)\in\hTV$ then $S =W$;
 \item  \label{it:notTV}
   $(X,Y)\in\arv\cC$ and $(X,Y,W)\in\arv\hTV$;
 \item \label{CnotCp} $(X,Y)\in \cC\setminus \mco(\cD_p)$.
   \een
\item\label{it:inTV}
  If  $(X,Y)\in\cD_p(n)$,
then $(X,Y,W)$ is a not a Euclidean extreme point of $\hTV(n)$ and $(X,Y)$ is not a Euclidean extreme point  of $\cC$.
\een
\noindent
If $J=0$, then
$W=Y^2$, so
 $(X,Y,Y^2)\in\arv\hTV$, $(X,Y)\in\arv\cC$, and $(X,Y)\in\arv\mco(\cD_p)$.
\end{prop}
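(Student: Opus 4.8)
The plan is to treat the four groups of assertions in the order (1)(a), (1)(b), (1)(c), then the $J=0$ addendum, and finally part~(2). Write $J=\la vv^*$ with $\la>0$ and $v$ a unit vector, and keep the two identities $I-X^2=W^2$ and $W=Y^2+\la vv^*$ coming from $(X,Y,W)\in\hA$ at hand. Recall that $\hTV=\cD_A$ is a free spectrahedron (hence matrix convex, fully free, and Corollary~\ref{cor:RG+} applies to it) and that $\cD_p$ is fully free. The computation that organizes everything is $W^2-Y^4=\la\bigl(Y^2vv^*+vv^*Y^2+\la vv^*\bigr)$: if $Y^2v=cv$ this equals $\la(2c+\la)vv^*\succeq0$, so $(X,Y)\notin\cD_p$ forces $v$ to be a non-eigenvector of $Y^2$, hence of $Y$, while $(X,Y)\in\cD_p$ forces $Y^2v=cv$ with $0\le c\le1-\la$ (using in addition $W^2\preceq I$).

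\emph{Parts (1)(a) and (1)(b).} For (1)(a), let $(X,Y,S)\in\hTV$ be a second lift. From $S^2\preceq I-X^2=W^2$ with $S,W\succeq0$, operator monotonicity of the square root gives $S\preceq W$; combined with $S\succeq Y^2=W-J$ this gives $0\preceq W-S\preceq J$, so by rank one $W-S=\mu vv^*$ with $\mu\in[0,\la]$. Then $W^2-S^2=\mu\bigl(Y^2vv^*+vv^*Y^2+(2\la-\mu)vv^*\bigr)$, and since $Y^2v\notin\spn\{v\}$ this matrix is indefinite on $\spn\{v,Y^2v\}$ unless $\mu=0$; as $(X,Y,S)\in\hTV$ forces $W^2-S^2\succeq0$ we conclude $S=W$. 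For (1)(b), matrix convexity of $\hTV$ reduces us to scalar dilations (Lemma~\ref{lem:scalar enough for arv}). Compressing $I-\tilde X^2-\tilde W^2\succeq0$ to the first $n$ coordinates and using $I-X^2-W^2=0$ forces the off-diagonal blocks $\al$ of $\tilde X$ and $\ga$ of $\tilde W$ to vanish; compressing $\tilde W\succeq\tilde Y^2$ then gives $\be\be^*\preceq J$, so the off-diagonal block $\be$ of $\tilde Y$ is $\mu v$, and positivity of the block $\tilde W-\tilde Y^2$ forces the column $-\mu(Y+bI)v$ into $\ran(J-\be\be^*)\subseteq\spn\{v\}$ ($b$ the bottom entry of $\tilde Y$); since $v$ is not an eigenvector of $Y$, this is impossible unless $\mu=0$. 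Hence $(X,Y,W)\in\arv\hTV$. For $(X,Y)\in\arv\cC$: a scalar dilation $(\tilde X,\tilde Y)\in\cC(n+1)$ carries a lift $\tilde W$; compressing both defining inequalities, the top-left block $W'$ of $\tilde W$ satisfies $(X,Y,W')\in\hTV$, so $W'=W$ by (1)(a), and the computation just given kills the off-diagonal blocks of $\tilde X$ and $\tilde Y$.

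\emph{Part (1)(c) and the $J=0$ addendum.} Since $(X,Y,W)\in\hTV$ we have $(X,Y)\in\cC$. If $(X,Y)\in\mco(\cD_p)$ then, $\cD_p$ being fully free, Proposition~\ref{prop:hull-abstract} produces an isometry $V$ and $(X',Y')\in\cD_p\subseteq\cC$ with $(X,Y)=V^*(X',Y')V$; since $(X,Y)\in\arv\cC$, the coordinate-free Arveson condition gives $V(X,Y)=(X',Y')V$, so $\ran V$ reduces $(X',Y')$ and $(X,Y)\in\cD_p$, a contradiction. For $J=0$ we have $W=Y^2$ and $I-X^2-Y^4=I-X^2-W^2=0$, so $(X,Y)\in\cEv\cD_p$; by the lemma $\cEv\cD_p\subseteq\arv\cD_p$ and, $\cD_p$ being fully free, Proposition~\ref{lem:SEinCTV} gives $\arv\cD_p=\arv\mco(\cD_p)$, whence $(X,Y)\in\arv\mco(\cD_p)$. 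That $(X,Y,Y^2)\in\arv\hTV$ follows as in (1)(b): compressing $I-\tilde X^2-\tilde W^2\succeq0$ (now $I-X^2-Y^4=0$) kills the off-diagonal $X$- and $W$-blocks, then $\tilde W\succeq\tilde Y^2$ kills the off-diagonal $Y$-block. Finally any lift $W'$ of $(X,Y)$ to $\hTV$ satisfies $Y^2\preceq W'\preceq(I-X^2)^{1/2}=Y^2$, so $W'=Y^2$ is unique and the same compression argument gives $(X,Y)\in\arv\cC$.

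\emph{Part (2) and the main obstacle.} Here $(X,Y)\in\cD_p$ provides a second lift $(X,Y,Y^2)\in\hTV$, and with $(X,Y,W)\in\hTV$ and convexity of $\hTV(n)$ the segment $\{(X,Y,Y^2+sJ):0\le s\le1\}$ lies in $\hTV(n)$, since $I-X^2-(Y^2+sJ)^2=(1-s)(I-X^2-Y^4)+s(1-s)J^2\succeq0$ there. To see that $(X,Y)$ is not a Euclidean extreme point of $\cC$ I would exhibit a genuine two-sided perturbation; since $\cD_p\subseteq\cC$ it suffices to show $(X,Y)$ is not a Euclidean extreme point of $\cD_p(n)$, and the rank-one gap $I-X^2-Y^4=\la(2c+\la)vv^*$ leaves a direction in the kernel of the linearization of $p$ at $(X,Y)$ along which one can move both ways. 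For ``$(X,Y,W)$ is not a Euclidean extreme point of $\hTV(n)$'', apply Corollary~\ref{cor:RG+}: because $I-X^2-W^2=0$ the kernel of the pencil at $(X,Y,W)$ is as large as possible, and one must exhibit a nonzero $\al=(\al_1,\al_2,\al_3)$ whose homogeneous pencil vanishes on that kernel; this boils down to producing a nonzero $\al_2$ supported on $\spn\{v\}$ with $\al_2Yv\in\spn\{v\}$, i.e.\ to the fact that $v$ is an eigenvector of $Y$ — and then $\al=(0,\rho vv^*,0)$ works for every small $\rho\neq0$. I expect this last point, upgrading ``$Y^2v\in\spn\{v\}$'' to ``$Yv\in\spn\{v\}$'' from $(X,Y)\in\cD_p$, to be the main obstacle of the whole Proposition: everything else is Schur-complement and square-root-monotonicity bookkeeping around the single equation $I-X^2-W^2=0$, whereas here one must extract information from the genuinely quartic inequality $I-X^2-Y^4\succeq0$.
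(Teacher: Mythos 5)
Your treatment of parts (1)(a)--(c) and of the $J=0$ addendum is correct and essentially reproduces the paper's argument in lightly repackaged form. Where the paper routes uniqueness of the lift through Lemma \ref{lem:plr} (operator monotonicity of the square root plus a rank-one dichotomy) and Lemma \ref{lem:paulsays} (``either $S=W$ or $Y^2J=\la J$'', the second branch being incompatible with $(X,Y)\notin\cD_p$), you inline the same computation and resolve the dichotomy by noting that $(X,Y)\notin\cD_p$ forces $v$ to be a non-eigenvector of $Y^2$; and where the paper kills a general off-diagonal block $\beta$ by writing $\beta\beta^*=aJ$ and extracting $YJ=bJ$, you first reduce to column dilations via Lemma \ref{lem:scalar enough for arv} and then use the range condition for positive semidefinite block matrices --- a modest but genuine streamlining. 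Your (1)(c) and the $J=0$ case coincide with the paper's use of Propositions \ref{prop:hull-abstract} and \ref{lem:SEinCTV} together with the vanishing-boundary lemma.

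Part (2), however, is not proved, and the step you defer is exactly the crux. The segment from $(X,Y,Y^2)$ to $(X,Y,W)$ proves nothing, since $(X,Y,W)$ is an endpoint of it; the claim that $(X,Y)$ is already non-extreme in $\cD_p(n)$ is only asserted; and your route to non-extremeness of $(X,Y,W)$ in $\hTV(n)$ is explicitly reduced to the unproven statement that $v$ is an eigenvector of $Y$. The paper makes the same leap: from $Y^2J=\la J$ (via Lemma \ref{lem:plr}) it passes, with only the parenthetical ``and so $Y$ commutes with $J^{1/2}$'', to the dilation with off-diagonal block $J^{1/2}$. But an eigenvector of $Y^2$ need not be an eigenvector of $Y$ when the eigenvalue is degenerate, and then the first assertion of (2) can genuinely fail: take $Y=\tfrac12(e_1e_2^*+e_2e_1^*)$, $J=\tfrac14 e_1e_1^*$, $W=Y^2+J=\operatorname{diag}(\tfrac12,\tfrac14)$, $X=(I-W^2)^{1/2}$, so that $(X,Y,W)\in\hA$, $J\neq0$ and $(X,Y)\in\cD_p(2)$. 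If $(X\pm\al,Y\pm\be,W\pm\ga)\in\hTV(2)$ for both signs, adding the two inequalities $I-(X\pm\al)^2-(W\pm\ga)^2\succeq0$ and using $I-X^2-W^2=0$ gives $\al=\ga=0$; then $\be^2\preceq J$ forces $\be=s\,e_1e_1^*$, and positivity of $J-\be^2\mp(Y\be+\be Y)=\bigl(\begin{smallmatrix} \frac14-s^2 & \mp s/2\\ \mp s/2 & 0\end{smallmatrix}\bigr)$ forces $s=0$. So $(X,Y,W)$ is a Euclidean extreme point of $\hTV(2)$, and no argument can close this gap for the $\hTV$ half of (2) as stated; it becomes correct under the additional hypothesis $Yv\in\spn\{v\}$ (for instance when the eigenvalue of $Y^2$ at $v$ is simple), in which case both your sketch and the paper's explicit dilation go through. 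In the example only the $\cC$ half survives (perturb $X$ by small multiples of $e_1e_1^*$), so your proposed reduction to non-extremeness in $\cD_p(n)$ could at best address that half, and it too still needs an actual perturbation argument rather than the appeal to the ``kernel of the linearization''.
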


\subsubsection{Proof of Proposition \ref{prop:arvsumTV}}

We begin the
with several  lemmas.

\begin{lemma}[Paul Robinson private communication]
 \label{lem:plr}
   Suppose $E$ and $P$ are  positive semidefinite matrices and
   $J\succeq0$ is rank one.
   If
    \[
  (E+P)^2 \preceq (E+J)^2,
 \]
  then $P=J$ or $J E =EJ =\lambda J$ for a scalar $\lambda \geq 0$.
\end{lemma}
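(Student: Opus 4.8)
The plan is to convert the squared inequality into an unsquared one and then exploit that $J$ has rank one. The crucial first step is that the square root is operator monotone on $[0,\infty)$ (L\"owner): applying it to $0\preceq(E+P)^2\preceq(E+J)^2$, and noting that $E+P$ and $E+J$ are already positive semidefinite, so that their positive square roots are $E+P$ and $E+J$ respectively, yields $E+P\preceq E+J$ and hence $P\preceq J$. I expect this to be the only genuinely delicate point, since squaring itself is \emph{not} operator monotone and it is tempting to argue in the wrong direction.

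Next, write $J=vv^*$ with $v\neq0$. From $0\preceq P\preceq vv^*$ one gets $Pw=0$ whenever $v^*w=0$, so $\operatorname{ran}P\subseteq\C v$ and therefore $P=c\,vv^*$ for some scalar $c$; evaluating $P\preceq vv^*$ at $v$ forces $0\le c\le1$. Expanding and using $(vv^*)^2=\|v\|^2vv^*$ then gives
\[
 0\;\preceq\;(E+J)^2-(E+P)^2\;=\;(1-c)\bigl(Evv^*+vv^*E\bigr)+(1-c^2)\|v\|^2\,vv^*\;=:\;Q .
\]

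To finish, I would split on whether $v$ is an eigenvector of $E$. If $Ev=\lambda v$, then $\lambda\ge0$ because $E\succeq0$, and $EJ=JE=\lambda J$, which is the second alternative. Otherwise $\{v,Ev\}$ is linearly independent; pick a unit vector $u\in\operatorname{span}\{v,Ev\}$ with $v^*u=0$. Then $vv^*$ and $Evv^*$ annihilate $u$, so $Qu=(1-c)\,(v^*Eu)\,v$ and $u^*Qu=0$; since $Q\succeq0$, this forces $Qu=0$. But $v^*Eu=(Ev)^*u\neq0$ --- otherwise the nonzero vector $u\in\operatorname{span}\{v,Ev\}$ would be orthogonal to both $v$ and $Ev$ --- so $1-c=0$, i.e.\ $P=vv^*=J$, the first alternative. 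Once the operator-monotonicity input is in place the rest is routine rank-one bookkeeping, and I anticipate no further obstacle.
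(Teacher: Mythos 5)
Your proof is correct, and it shares the paper's two key opening moves: the operator monotonicity of the square root to pass from $(E+P)^2\preceq(E+J)^2$ to $P\preceq J$, and the rank-one observation that $P$ must then equal $tJ$ for some $0\le t\le 1$. Where you diverge is in extracting the conclusion from the resulting inequality $0\preceq (1-t)\bigl(EJ+JE+(1+t)J^2\bigr)$. The paper assumes $P\ne J$, divides out $(1-t)$, and then runs a range argument: writing the positive matrix $EJ+JE+2J^2$ as $RR^*-\tfrac{1}{2s^2}EJ^2E$ with $R=(\sqrt2+\tfrac{1}{\sqrt2 s}E)J$ rank one, it deduces that $\operatorname{ran}(EJ)=\operatorname{ran}(R)=\operatorname{ran}(J)$ and hence $EJ=\lambda J$. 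You instead split on whether $v$ (with $J=vv^*$) is an eigenvector of $E$; if not, you test the positive semidefinite difference $Q$ against the unit vector $u\in\operatorname{span}\{v,Ev\}\cap v^\perp$, getting $u^*Qu=0$, hence $Qu=0$, hence $(1-c)(v^*Eu)v=0$ with $v^*Eu\ne0$, forcing $c=1$ and $P=J$. This is essentially the contrapositive organization of the same dichotomy, but your endgame is more elementary and avoids the somewhat delicate range-equality bookkeeping with the auxiliary matrix $R$; the paper's version, in exchange, exhibits the positivity certificate explicitly. Both are complete proofs.
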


An amusing case is when $P=0$. Here $  E \preceq E+J,$
and the lemma says $  E^2 \not \preceq (E+J)^2$
unless $JE= EJ= \lambda J$.

\begin{proof}
  The square root function is operator monotone. Hence,
 $(E+P)^2 \preceq (E+J)^2$ implies $E+P\preceq E+J$ and
 thus $P\preceq J$. Since $J$ is rank one, there is a
 $0\le t\le 1$ so that $P=tJ$.  Thus,
\[
  (E+tJ)^2 \preceq (E+J)^2
\]
  from which it follows that
\[
  0\preceq  (1-t)( EJ+JE + (1+t)J^2).
\]
  Since $0\le t\le 1$, it now follows that if $P$ is not equal to $J$ (so $t<1$) that
\[
   0\preceq EJ+JE +2J^2.
\]
  Using  $J^2=sJ$ for some $s>0$, one finds
\begin{equation}\label{eq:Rg1}
  0 \preceq RR^* - \frac{1}{2s^2}(EJ^2E)
\end{equation}
 where $R$ is the rank one matrix
\begin{equation}\label{eg:Rg2}
  R=(\sqrt{2} + \frac{1}{\sqrt{2}s} E)J.
\end{equation}
If $EJ=0$, then the conclusion holds with $\lambda= 0$.
If $EJ\ne 0$, then (\ref{eq:Rg1}) implies
 the range of $R$ is equal to the range of $EJ$.
 However, combining this range equality with (\ref{eg:Rg2}),
 gives the range of $J$ equals the range of $EJ$.
 Thus $EJ$ and $J$ are both
 rank one with the same range and kernel. It follows that
  $EJ$ is a multiple of $J$. Since $E$ and $J$ are both self-adjoint,
  $E$ and $J$ commute. Finally,  $E$ and $J$ are, by hypothesis,
     positive semidefinite. Hence $EJ=\lambda J$  for some $\lambda>0$.
\end{proof}

\begin{lemma}
 \label{lem:paulsays}
 Suppose $(X,Y,W)\in \hA$ and let $J=W-Y^2
 \neq0$. Thus $J$ is rank one and positive semidefinite.   If $(X,Y,S)\in \hTV$, the either
\begin{enumerate}[\rm (A)]
 \item \label{it:A} $S=W$; or
 \item \label{it:B} there is a $\lambda>0$ such that $Y^2J=\lambda J$.
\end{enumerate}
\end{lemma}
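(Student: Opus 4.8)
The plan is to deduce both alternatives directly from Paul Robinson's Lemma~\ref{lem:plr} after the right change of variables. First I would invoke that lemma with
\[
 E = Y^2, \qquad P = S - Y^2, \qquad J = W - Y^2,
\]
checking that its hypotheses hold: $E \succeq 0$ is clear; $P \succeq 0$ because membership $(X,Y,S) \in \hTV$ forces $S \succeq Y^2$; and $J$ is rank one and positive semidefinite by the standing assumption that $(X,Y,W) \in \hA$ with $W - Y^2 \neq 0$. Observe also that $E + P = S$ and $E + J = W$.

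The only real content is verifying the inequality $(E+P)^2 \preceq (E+J)^2$, and this is precisely where the \emph{equality} built into membership in $\hA$ (as opposed to the mere inequality defining $\hTV$) enters. Since $(X,Y,W) \in \hA$ we have $I - X^2 - W^2 = 0$, hence $W^2 = I - X^2$; since $(X,Y,S) \in \hTV$ we have $I - X^2 - S^2 \succeq 0$, hence $S^2 \preceq I - X^2$. Therefore $(E+P)^2 = S^2 \preceq I - X^2 = W^2 = (E+J)^2$, and Lemma~\ref{lem:plr} applies. It yields that either $P = J$, in which case $S - Y^2 = W - Y^2$ and so $S = W$, which is \eqref{it:A}; or $EJ = JE = \lambda J$ for a scalar $\lambda \ge 0$, i.e., $Y^2 J = \lambda J$, which is \eqref{it:B}.

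I do not anticipate a genuine obstacle: once the substitution is spotted the proof is a single application of Lemma~\ref{lem:plr}, all the analysis (operator monotonicity of the square root and the rank-one range argument) having been carried out there. The one point to watch is the strict positivity of $\lambda$ claimed in \eqref{it:B}: Lemma~\ref{lem:plr} only delivers $\lambda \ge 0$, with $\lambda = 0$ corresponding to $Y^2 J = 0$. I would note that this degenerate possibility is harmless for the intended application in Proposition~\ref{prop:arvsumTV}\eqref{it:uniq}, where it is ruled out by the running hypothesis $(X,Y) \notin \cD_p$: writing $J = v v^*$, the relation $Y^2 J = 0$ gives $Y^2 v = 0$, whence $W^2 = Y^4 + (v^*v)\, v v^*$ and so $p(X,Y) = I - X^2 - Y^4 = W^2 - Y^4 = (v^*v)\, v v^* \succeq 0$, i.e., $(X,Y) \in \cD_p$ --- a contradiction. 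So one may safely read \eqref{it:B} with $\lambda \ge 0$.
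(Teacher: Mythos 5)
Your proof is correct and follows the same route as the paper: write $S = Y^2+P$ with $P\succeq 0$, deduce $(Y^2+J)^2 = W^2 \succeq S^2 = (Y^2+P)^2$ from $I-X^2=W^2$ and $I-X^2\succeq S^2$, and apply Lemma \ref{lem:plr}. Your remark that Lemma \ref{lem:plr} only yields $\lambda\ge 0$ while the statement asserts $\lambda>0$ is well taken --- the paper's own proof likewise concludes only $\lambda\ge 0$ --- and your observation that the degenerate case $\lambda=0$ still forces $(X,Y)\in\cD_p$, so it is harmless in the subsequent application, is exactly the right way to dispose of the discrepancy.
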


\begin{proof}
 Since $(X,Y,S)\in \hTV$, we have $I-X^2\succeq S^2$ and there is a positive semidefinite matrix $P$ such that $S=Y^2+P$.
 On the other hand, $(X,Y,W)\in \hA$ gives $I-X^2=W^2$ and $W=Y^2+J$. Hence $W^2\succeq S^2$. Equivalently,
\[
 (Y^2+J)^2\succeq (Y^2+P)^2.
\]
 By Lemma \ref{lem:plr}, either $P=J$ or $JY^2=Y^2J =\lambda J$ for some $\lambda\ge 0$.
\end{proof}

  We are now ready to prove
  Proposition \ref{prop:arvsumTV} \eqref{it:notinDp} which we restate as a lemma.

\begin{lemma}\label{lem:arvbdrytv}
If  $(X,Y,W)\in \hA$, $(X,Y,S)\in \cC$, but $(X,Y)\not\in\cD_p,$ then $S=W$ and  $(X,Y)\in \arv \cC\setminus \mco(\cD_p)$ and  $(X,Y,W)\in\arv\hTV$.
\end{lemma}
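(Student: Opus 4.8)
The plan is to establish the four claims of the lemma ($S=W$; $(X,Y)\in\arv\cC$; $(X,Y,W)\in\arv\hTV$; $(X,Y)\notin\mco(\cD_p)$) in that order, reusing Lemma~\ref{lem:plr}, Lemma~\ref{lem:paulsays}, and the Arveson-boundary machinery from Section~\ref{ssec:arv}. First I would record that $J:=W-Y^2\neq0$: if $J=0$ then $W=Y^2$, so $p(X,Y)=I-X^2-Y^4=I-X^2-W^2=0$ because $(X,Y,W)\in\hA$, putting $(X,Y)\in\cD_p$ against hypothesis. Thus $J=\xi\xi^*$ for some nonzero $\xi$, and the same bookkeeping shows $\xi$ is not an eigenvector of $Y^2$: if $Y^2\xi=\lambda\xi$ then $Y^2J=\lambda J=JY^2$ (using $J=J^*$) and $J^2=(\tr J)J$, hence $W^2=(Y^2+J)^2=Y^4+(2\lambda+\tr J)J$; combined with $W^2=I-X^2$ this gives $p(X,Y)=(2\lambda+\tr J)J\succeq0$, again forcing $(X,Y)\in\cD_p$. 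Now Lemma~\ref{lem:paulsays} tells us any $(X,Y,S)\in\hTV$ satisfies $S=W$ or $Y^2J=\lambda J$ for some $\lambda>0$; the latter was just excluded, so $S=W$, which is the asserted uniqueness of the lift.

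Next I would prove $(X,Y)\in\arv\cC$. Since $\cC$ is matrix convex (Lemma~\ref{lem:TVhull}), by Lemma~\ref{lem:scalar enough for arv} it suffices to treat a one-column dilation: suppose $Z=\begin{pmatrix}X&\alpha\\\alpha^*&a\end{pmatrix}$, $V=\begin{pmatrix}Y&\beta\\\beta^*&b\end{pmatrix}$ with $(Z,V)\in\cC(n+1)$, and pick $T\in\mathbb S_{n+1}$ with $(Z,V,T)\in\hTV$. Compressing $(Z,V,T)$ to the first $n$ coordinates ($\hTV$ being a free spectrahedron, hence matrix convex and closed under isometric compression) shows the upper-left block $W''$ of $T$ satisfies $(X,Y,W'')\in\hTV$, so $W''=W$ by the uniqueness above; write $T=\begin{pmatrix}W&\gamma\\\gamma^*&c\end{pmatrix}$. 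The $(1,1)$-entry of $I-Z^2-T^2\succeq0$ is $(I-X^2-W^2)-\alpha\alpha^*-\gamma\gamma^*=-\alpha\alpha^*-\gamma\gamma^*$, forcing $\alpha=0$ and $\gamma=0$. The $(1,1)$-entry of $T-V^2\succeq0$ is $W-Y^2-\beta\beta^*=J-\beta\beta^*$, so $\beta\beta^*\preceq\xi\xi^*$ and $\beta=t\xi$ for a scalar $t$ with $|t|\le1$; then the off-diagonal entry $-(Y+b)\beta=-t(Y+b)\xi$ of $T-V^2$ must lie in the range of $J-\beta\beta^*=(1-|t|^2)\xi\xi^*\subseteq\C\xi$, so if $t\neq0$ we obtain $Y\xi\in\C\xi$, contradicting that $\xi$ is not an eigenvector of $Y^2$. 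Hence $\beta=0$ and $(X,Y)\in\arv\cC$. Running the identical block computation with the top-left blocks of $Z,V,T$ already prescribed to be $X,Y,W$ (so the compression step is unnecessary) yields $(X,Y,W)\in\arv\hTV$.

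Finally, $(X,Y)\notin\mco(\cD_p)$. Since $\cD_p\subset\cC$ and $\cC$ is matrix convex, $\mco(\cD_p)\subseteq\cC$; so if $(X,Y)\in\mco(\cD_p)$ then, as $(X,Y)\in\arv\cC$, Lemma~\ref{lem:preCTV} gives $(X,Y)\in\arv\mco(\cD_p)$. But $\cD_p$ is fully free (being free semialgebraic), so Proposition~\ref{lem:SEinCTV} gives $\arv\mco(\cD_p)=\arv\cD_p\subseteq\cD_p$, whence $(X,Y)\in\cD_p$, a contradiction.

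The step I expect to be the main obstacle is the block-matrix analysis in the second paragraph: one must exploit uniqueness of the lift to pin the top-left block of $T$ to be exactly $W$ (so that the $(1,1)$-entry of $I-Z^2-T^2$ collapses to $-\alpha\alpha^*-\gamma\gamma^*$), and then squeeze $\beta=0$ out of the rank-one domination $\beta\beta^*\preceq J$ together with the fact that $\xi$ is not an eigenvector of $Y^2$. The remaining parts are essentially immediate once Lemma~\ref{lem:paulsays} and Proposition~\ref{lem:SEinCTV} are available.
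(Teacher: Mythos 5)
Your proof is correct and follows essentially the same route as the paper: rule out case (B) of Lemma \ref{lem:paulsays} using $(X,Y)\notin\cD_p$ to get $S=W$ and $I-X^2-W^2=0$, kill $\alpha$ and $\gamma$ from the $(1,1)$-entry of $I-Z^2-T^2$, kill $\beta$ from the rank-one domination $\beta\beta^*\preceq J$ plus the off-diagonal range condition, and exclude $\mco(\cD_p)$ via Lemma \ref{lem:preCTV} and Proposition \ref{lem:SEinCTV}. The only (harmless) reorganization is that you front-load the observation that $\xi$ is not an eigenvector of $Y^2$ and let it do double duty, where the paper instead derives the contradiction $(X,Y)\in\cD_p$ separately in each case.
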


\begin{proof}
From Lemma \ref{lem:paulsays}, there are two cases to consider.
In case \eqref{it:B}, $W^2 \succeq Y^4 + \sigma J$ for a $\sigma \ge 0$ and hence
  $I-X^2 - Y^4 \succeq I-X^2-W^2 = 0$ so that $(X,Y)\in\cD_p$. Thus, case \eqref{it:A} holds; i.e., $S=W$.  In particular,
$I-X^2-S^2=0$.

Suppose  the tuple
\begin{equation}
 \label{eq:tildes}
  \tX =\begin{pmatrix} X& \al \\ \al^*& *  \end{pmatrix}, \ \
   \tY=\begin{pmatrix} Y & \beta \\ \beta^* & Y_* \end{pmatrix},\ \
 \tS=\begin{pmatrix} S & t \\ t^* & S_* \end{pmatrix}.
\end{equation}
 is in $ \hTV.$
Positivity of  the upper left entry of $\tS-\tY^2$ gives $S  \succeq  Y^2 + \beta \beta^*$ and
 positivity of the upper left entry of $I-\tX^2-\tS^2$ gives
\[
 I - X^2 - S^2  \succeq \al \al^*  + t^* t   \succeq 0.
\]
 From what has already been proved, $S=W$ and  $I-X^2-S^2=0$.  Hence $\al=t=0$.

To show $\beta =0$, observe the inequality $ \tS \succeq  \tY^2$ implies
\[
\begin{pmatrix} S & 0 \\ 0 & S_* \end{pmatrix} - \begin{pmatrix} Y^2+\beta \beta^* & Y\beta + \beta Y_* \\ (Y\beta + \beta Y_*)^* & Y_*^2 + \beta^* \beta \end{pmatrix} \succeq 0.
\]
Hence,
\begin{equation}
 \label{eq:Jetc}
  \begin{pmatrix} J- \beta \beta^*  & -(Y\beta + \beta Y_*) \\ -(Y\beta + \beta Y_*)^* & S_* -(Y_*^2 + \beta^* \beta) \end{pmatrix} \succeq 0.
\end{equation}
Since the left top corner is positive and $J$ is rank one,  $\beta \beta^* = aJ$ for some $0 \leq a \leq 1$. If $a=0$, then $\beta = 0$.
Arguing by contradiction, suppose $a\ne 0$.  The inequality of equation \eqref{eq:Jetc} implies there is a matrix $C$ such that
 $Y\beta + \beta Y_* = JC$. Rearranging and multiplying on the right by $\beta^*$ gives
\[
  Y\beta \beta^*  = \beta\big(-Y_*  + \frac{\beta^*}{a}C^* \big) \beta^*.
\]
Since $\beta$ is rank one ($\beta\beta^* = aJ$ and $J$ is rank one), we conclude,
$Y \beta \beta^* = b \beta \beta^*$ and thus $YJ=bJ$. It now follows that $W^2 =(Y^2+J)^2 = Y^4 + 2ab^2 J+J^2\succeq 0$ and hence
\[
 I-X^2-Y^4 \succeq I-X^2 -W^4 \succeq 0.
\]
 Consequently, $(X,Y)\in \cD_P$, a contradiction.

 Summarizing, if $\tX$, $\tY$ and $\tS$ are as in equation \eqref{eq:tildes} and $(\tX,\tY,\tS)\in \hTV$, then $S=W$ and $\alpha=\beta=t=0$.
 It is immediate that $(X,Y,W)\in \arv \hTV$.   To prove $(X,Y)\in \arv \cC$, suppose $\tX$ and $\tY$  are as in equation \eqref{eq:tildes} and $(\tX,\tY)\in \cC$.  Thus, there is a $\tS$ such that $(\tX,\tY,\tS)\in \hTV$.  Express $\tS$ as in equation \eqref{eq:tildes} too. It follows that $\alpha=\beta=0$. Hence $(X,Y)\in\arv \cC$.  Finally, arguing by contradiction, suppose $(X,Y)\in \mco(\cD_p)$. In this case, $(X,Y)\in\arv\mco(\cD_p)$ since $\mco(\cD_p)\subset\cC$ and $(X,Y)\in\arv \cC$. An application of Lemma \ref{lem:SEinCTV} gives the contradiction $(X,Y)\in\cD_p$.
\end{proof}

\begin{proof}[Proof of the remainder of  Proposition \ref{prop:arvsumTV}]
To prove item \eqref{it:inTV}, suppose $(X,Y)\in \cD_p$ and $(X,Y,W)\in \hA$. In particular, $I-X^2-Y^4\succeq 0$ and both $I-X^2-W^2 =0$ and $W-Y^2=J$ is rank one positive semidefinite.  Combining the first two of these equations gives  $W^2 \succeq Y^4$. Using the third,
$$(Y^2+J)^2 \succeq (Y^2)^2.$$
From Lemma \ref{lem:plr}, $Y^2J=\lambda J$. In  particular $Y^2$ commutes with $J$ (and so $Y$ commutes with $J^{1/2}$).
Verify
\[
  \tX =\begin{pmatrix} X& 0 \\ 0 & X  \end{pmatrix}, \ \
   \tY=\begin{pmatrix} Y & J^{1/2} \\ J^{1/2} & -Y \end{pmatrix},\ \
 \tW=\begin{pmatrix} W & 0 \\ 0 & W \end{pmatrix}
\]
is in $\hTV$.
Since  $J \not= 0$, the tuple $(X,Y,W)$ is not a Euclidean extreme point of $\hTV$ by Theorem \ref{thm:intromainext}\eqref{it:Euclidean-geometric}.
Similarly, to conclude $(X,Y)$ is not a Euclidean extreme point of $\cC$ verify
\[
  \tX =\begin{pmatrix} X& 0 \\ 0 & X  \end{pmatrix}, \ \
   \tY=\begin{pmatrix} Y & J^{1/2} \\ J^{1/2} & -Y \end{pmatrix}
\]
is in $\cC$.

Finally, we prove  the final assertion in the proposition.
Suppose
\[
  \tX =\begin{pmatrix} X& \al \\ \al^*& *  \end{pmatrix}, \ \
   \tY=\begin{pmatrix} Y & \beta \\ \beta^* & Y_* \end{pmatrix},\ \
 \tW=\begin{pmatrix} Y^2 & t \\ t^* & W_* \end{pmatrix}
\]
is in $\hTV$. Then $I-\tX^2-\tS^2 \succeq 0$ gives, in the top left corner, that $I - X^2 -\al \al^* - Y^4 - t t^* = -\al \al^* - t t^*\succeq  0$. Thus $\al = t = 0$. Additionally, $\tW \succeq \tY^2$ gives that, examining the top left corner, $-\beta \beta^* \succeq 0$. Thus $\beta =0$. Therefore $(X,Y,Y^2)\in\arv\hTV$.

Now suppose $\tX$ and $\tY$ are given as in equation \eqref{eq:tildes} and $(\tX,\tY)\in \cC$.
There exists a $\tS$ as in equation \eqref{eq:tildes}  such that $(\tX, \tY, \tS)$ is in $\hTV$. Again, by observing the top left corners of the inequalities $I-\tX^2 -\tS^2 \succeq 0$ and $\tS \succeq \tY^2$, it follows that $Y^4 - S^2 = I-X^2-S^2 \succeq \al \al^* + t t^* \succeq 0$ and $S - Y^2 \succeq \beta \beta^* \succeq 0$.
Since the
square root function is matrix monotone, $Y^2 \succeq S \succeq Y^2$. Hence $S = Y^ 2$.
 Now $\beta=\alpha= 0$ from the first part of this proof and therefore $(X,Y)\in\arv\cC$. The final claim is a consequence of Proposition \ref{lem:SEinCTV}. 
\end{proof}

\subsubsection{Proof of Proposition \ref{prop:TV}}
It suffices to show there is an $(X,Y,W)$ satisfying the hypotheses of Proposition \ref{prop:arvsumTV}\eqref{it:notTV}.
To show there  are in fact many $(X,Y,W)$ that
satisfy the hypotheses of the lemma,  choose a rank one positive $J$ and a $Y$ such that and $JY^2+Y^2J-J^2$ is not positive semidefinite and $W=Y^2+J$   is a contraction; i.e., $I-W^2\succeq 0$.
Choose $X$ such that $X^2=I-W^2$. Hence $(X,Y,W)\in \hA$. On the other hand, since
\[
I-X^2-Y^4 = JY^2+Y^2J-J^2 \not\succeq 0,
\]
 $(X,Y)\not\in\cD_p$.
It is easy to actually construct such $X,Y,W$. The choice
\beq
\label{eq:exlasse}
   Y= \sqrt{\mu}  \begin{pmatrix} 1 & 0 \\ 0 & 0 \end{pmatrix}.
\qquad
  W = \mu \begin{pmatrix} 2 & 1 \\ 1 & 1 \end{pmatrix}
  \qquad
  X^2 = 1 - W^2.
\eeq
 with  $\mu$ chosen so that the norm of $W$ is $1$ appears in  \cite{Lasse}.
\hfill\qedsymbol

\linespread{1.0}
\small

\section{Correction to Proposition \ref{prop:polyhedron} and Corollary \ref{cor:hardtobepolar}, posted June 2019. }
\label{sec:RealFreeDuals}

Before proceeding, as an epilogue we mention that the main theorem here on absolute extreme points, Theorem \ref{thm:intromainext} \eqref{it:wearekleski} and Theorem \ref{thm:wearekleski}, while stated over $\C$ is true with nearly the same proof over the real numbers. Details are in the subsequent paper by Evert and Helton \cite{EH+} on arXiv \url{https://arxiv.org/abs/1806.09053}.

We now address an error in Section \ref{sec:FreeSimpAndDuals} in the original version of this manuscript. Namely, Proposition \ref{prop:polyhedron} is incorrect without an additional hypothesis. In fact there exist free spectrahedra with Euclidean extreme points at level one which are not absolute extreme points. As a consequence, Corollary \ref{cor:hardtobepolar} may not be correct without this additional hypothesis. Note that Corollary \ref{cor:hardtobepolar} is also stated in the intro as Corollary \ref{cor:hardtobepolar-intro}. Indeed, it is unknown to the authors if there exists a free spectrahedron whose first level is not a polyhedron and whose polar dual is again a free spectrahedron. The authors thank Tom-Lukas Kriel for alerting us to the error and providing a counter example, see \cite[Example 7.12]{K+}, which is slightly adapted to our context in Example \ref{exa:KrielBadSpec}. 

The error in Proposition \ref{prop:polyhedron} may be fixed by restricting to free spectrahedra which are equal to their complex conjugate at level $2$. This property may be used to insure the Euclidean extreme points at level one of such a free spectrahedron are absolute extreme points, thereby obtaining a correction of Proposition \ref{prop:polyhedron}. As a consequence, if $A$ is a tuple of real symmetric matrices and $\cD_A$ is a free spectrahedron whose polar dual is again a free spectrahedron, then $\cD_A (1)$ must be a polyhedron.

Before continuing we reemphasize that while we work over the complexes, by definition level one of a free spectrahedron is a subset of the set of $g$-tuples of real numbers. This is due to the fact that the elements of a free spectrahedron are $g$-tuples of self-adjoint matrices

\begin{prop}\label{prop:Realpolyhedron}
Let $\cD_A$ be a free  spectrahedron and assume $\cD_A (2)= \overline{\cD_A (2)}$. Then Euclidean extreme points of $\cD_A(1) \subset \R^g$ are
  Arveson boundary points of $\cD_A$.
\end{prop}

\begin{proof}
  Fix a Euclidean extreme point $x \in \cD_A(1) \subset \R^g$. If $x$ is not Arveson extreme then there are tuples $a\in\C^g$ and $b\in \R^g$ such that
\[
  Y = \begin{pmatrix} x & a \\ a^* & b \end{pmatrix}= \begin{pmatrix} x & a \\ \overline{a} & b \end{pmatrix}\in \cD_A(2).
\]
By assumption $\cD_A (2)= \overline{\cD_A (2)}$ so it follows that
\[
 \overline{Y} = \begin{pmatrix} x & \overline{a} \\ a & b \end{pmatrix} \in \cD_A (2). 
\]
The spectrahedron $\cD_A (1)$ is convex, so we find
\[
 \begin{pmatrix} x & \mathrm{Re} (a) \\ \mathrm{Re} (a) & b \end{pmatrix} = 
\frac{1}{2}\left( \begin{pmatrix} x & a \\ \overline{a}  & b \end{pmatrix}+
 \begin{pmatrix} x & \overline{a} \\ a & b \end{pmatrix} \right) 
 \in \cD_A (2). 
\]
Using Theorem \ref{thm:intromainext} \eqref{it:Euclidean-geometric} then shows that $\mathrm{Re} (a)=0$. 

It remains to show that $\mathrm{Im}(a)=0$. A matrix convex set is closed under unitary conjugation, so $Y \in \cD_A$ implies 
\[
\begin{pmatrix}
x & i a \\
- i \overline{a} & b 
\end{pmatrix} =
\begin{pmatrix}
1 & 0 \\
0 & -i
\end{pmatrix}
\begin{pmatrix}
x &  a \\
\overline{a} & b 
\end{pmatrix} 
\begin{pmatrix}
1 & 0 \\
0 & i
\end{pmatrix}\in \cD_A (2)
\]
However, $\mathrm{Re} (a)=0$ implies $i a = -i \overline{a} = -\mathrm{Im} (a) \in \R^g$. We have assumed $x$ is Euclidean extreme so Theorem \ref{thm:intromainext} \eqref{it:Euclidean-geometric} then shows that $\mathrm{Im} (a)=0$. We conclude that $a=0$ and $x$ is an Arveson extreme point of $\cD_A$.
\end{proof}

For the reader's convenience we present a self-contained example. The example we present is a version of the example found in \cite{K+} which has been simplified to better suit our purpose.

\begin{example}
\label{exa:KrielBadSpec}
Let $A=(A_1, \dots, A_4) \in \C^g$ be the tuple defined by
\[
A_1= \bem -2 & 0 & 0 \\
0 & 1 & 0 \\
0 & 0 & 1
\eem \quad \quad \quad \quad
A_2=\bem 1 & 0 & 0 \\
0 & -2 & 0 \\
0 & 0 & 1
\eem 
\]
and
\[
A_3= \bem 0 & -1 & 0 \\
-1 & 0 & 0 \\
0 & 0 & 0
\eem \quad \quad \quad \quad
A_4=\bem 0 & -i & 0 \\
i & 0 & 0 \\
0 & 0 & 0
\eem .
\]
Then one may easily check that $x=(1,0,0,0)$ is a Euclidean extreme point of $\cD_A$. However the tuple $Y$ with entries
\[
Y_1= \bem 1 & 0 \\
0 & 0
\eem 
\quad \quad
Y_2 = \bem 0 &  0 \\
  0 &    1
   \eem
   \quad \quad
Y_3 = \bem 0  & \frac{3}{2}  \\
 \frac{3}{2}   &   0 
   \eem
   \quad \quad
   Y_4 = \bem 0  & \frac{-3i}{2}  \\
 \frac{3i}{2}   &   0 
   \eem   
\]
is a nontrivial dilation of $x$ which is again an element of $\cD_A$. Therefore $x$ is not an Arveson extreme point of $\cD_A$. \hfill\qed
\end{example}

The corrected Proposition \ref{prop:polyhedron} leads to a similar variant on Corollary \ref{cor:hardtobepolar}.

\begin{cor}
 \label{cor:Realhardtobepolar}
Let $A$ be a tuple of real symmetric matrices. If the polar dual of $K=\cD_A$ is again a free spectrahedron, then $\cD_A(1) \subset \R^g$ is
 a polyhedron. In particular, if $\cD_A(1) \subset \R^g$ is a ball, then the polar of $\cD_A$
 is not a free spectrahedron.
\end{cor}

\begin{proof}
Without loss of generality, $L_A$ is minimal. If $K^\circ=\cD_{B}$ is again a free spectrahedron (with $L_B$ minimal),
then $K$ has finitely many irreducible Arveson boundary points
by Theorem \ref{thm:weak-kleski}.
In particular, by Proposition \ref{prop:Realpolyhedron},
$K(1)$ has finitely many Euclidean extreme points and is thus a
polyhedron. For the final statement of the corollary,
use that $\cD_A^\circ(1)=\cD_A(1)^\circ$ by
\cite[Proposition 4.3]{HKMjems} since $\cD_A$
is matrix convex.
\end{proof}



\begin{thebibliography}{DLTW08}

\bibitem[Ag88]{Aglermodel}
 J. Agler, {\it An abstract approach to model theory,}
   Surveys of some recent results in operator theory, Vol. II, 1--23, Pitman Res. Notes Math. Ser., 192, Longman Sci. Tech., Harlow, 1988.

\bibitem[AM15]{AM+}
J. Agler, J.E. McCarthy:
\textit{Global holomorphic functions in several non-commuting variables},
{Canad. J. Math.} {67} (2015) 241--285.

 \bibitem[Arv69]{Arv69}
 W. Arveson:
 \textit{Subalgebras of C$^*$-algebras},
Acta Math. 123 (1969) 141--224.


\bibitem[Arv72]{Ar72}
W. Arveson:
\textit{Subalgebras of C$^*$-algebras II},
{Acta Math.} {128}  (1972)  271--308.

\bibitem[Arv08]{Arv08}
W. Arveson:
\textit{The noncommutative Choquet boundary},
{J. Amer. Math. Soc.} {21}  (2008) 1065--1084.



\bibitem[BB07]{BB07}
J.A. Ball, V. Bolotnikov:
\textit{Interpolation in the noncommutative Schur-Agler class},
{J. Operator Theory} {58} (2007) 83--126.

\bibitem[Bar02]{Bar02}
A. Barvinok: \textit{A course in convexity}, Graduate studies in Mathematics 54, Amer. Math. Soc.,
2002.



\bibitem[BPR13]{BPR13}
G.  Blekherman, P.A. Parrilo, R.R. Thomas (editors):
{\it Semidefinite optimization and convex algebraic geometry},
MOS-SIAM Series on Optimization {13}, SIAM, 2013.


\bibitem[BGFB94]{BGFB94}
S. Boyd, L. El Ghaoui, E. Feron, V. Balakrishnan:
{\it Linear Matrix Inequalities in System and Control Theory},
SIAM Studies in Applied Mathematics {15}, SIAM, 1994.

  \bibitem[Br\"a11]{Bra11}
  P. Br\"and\'en:
  \textit{Obstructions to determinantal representability},
  Adv. Math. 226 (2011) 1202--1212.

\bibitem[BKP16]{BKP16}
S. Burgdorf, I. Klep, J. Povh:
\textit{Optimization of polynomials in non-commuting variables}, SpringerBriefs in Mathematics, Springer-Verlag, 2016.

\bibitem[DK15]{DK15}
K.R. Davidson, M. Kennedy:
\textit{The Choquet boundary of an operator system},
    Duke Math. J. 164  (2015) 2989--3004.

\bibitem[DDSS17]{DDSS+} K.R. Davidson, A. Dor-On, O. Shalit, B. Solel: \textit{Dilations, inclusions of matrix convex sets, and
completely positive maps},
Internat. Math. Res. Notices  {13} (2017) 4069--4130.

\bibitem[dOHMP09]{dOHMP09} M. de Oliveira, J.W. Helton, S. McCullough, M. Putinar: \textit{Engineering systems and free semi-algebraic
geometry}, in: Emerging applications of algebraic geometry (edited by M. Putinar, S. Sullivant), 17--61, Springer-Verlag, 2009.

\bibitem[DM05]{DM05}
M.A. Dritschel, S.A. McCullough:
\textit{Boundary representations for families of representations of operator algebras and spaces},
J. Operator Theory 53  (2005) 159--168.


\bibitem[EW97]{EW97}
E.G. Effros, S. Winkler:
\textit{Matrix convexity: operator analogues of the bipolar and Hahn-Banach theorems},
{J. Funct. Anal.} {144}  (1997)  117--152.

\bibitem[EH+]{EH+}
E. Evert, J.W. Helton:
\textit{Arveson extreme points span free spectrahedra}, 
preprint \url{https://arxiv.org/abs/1806.09053}

\bibitem[Far00]{F00}
 D.R. Farenick:
 {\it Extremal matrix states on operator systems,}
  J. London Math. Soc. {61} (2000) 885--892.

\bibitem[Far04]{F04}
 D.R. Farenick:
 {\it Pure matrix states on opertor systems},
  Linear Algebra Appl. {393} (2004) 149--173.

\bibitem[FNT17]{FNT+} T. Fritz, T. Netzer, A. Thom: {\it Spectrahedral Containment and Operator Systems with Finite-dimensional Realization}, SIAM J. Appl. Algebra Geom. {1} (2017) 556--574.

\bibitem[FHL18]{FHL+}
A.H. Fuller, M. Hartz, M. Lupini:
{\it Boundary representations of operator spaces, and compact rectangular matrix convex sets}, J. Operator Theory {79} (2018) 139--172.


\bibitem[Ham79]{Ham79}
M. Hamana: \textit{Injective envelopes of operator systems}, Publ. Res. Inst. Math. Sci. 15 (1979) 773--785.


\bibitem[HKM13]{HKM13} J.W. Helton, I. Klep, S. McCullough:
\textit{The matricial relaxation of a linear matrix inequality},
 {Math. Program.} {138} (2013) 401--445.

\bibitem[HKM16]{Lasse}
  J.W. Helton, I. Klep, S. McCullough:
 {\it Matrix Convex Hulls of Free Semialgebraic Sets}, Trans. Amer. Math. Soc. 368 (2016) 3105--3139.

\bibitem[HKM17]{HKMjems}
  J.W. Helton, I. Klep, S. McCullough:
{\it The tracial Hahn-Banach theorem, polar duals, matrix convex sets, and projections of free spectrahedra}, J. Eur. Math. Soc. {6} (2017) 1845--1897.

\bibitem[HKMS19]{HKMS+}
  J.W. Helton, I. Klep, S. McCullough, M. Schweighofer:
{\it  Dilations, Linear Matrix Inequalities, the Matrix Cube Problem and Beta Distributions},
 Mem. Amer. Math. Soc. {253} (2019).

\bibitem[HM12]{HM12}
 J.W. Helton, S. McCullough:
\textit{Every free basic convex semi-algebraic set has an LMI representation},
{Ann. of Math. (2)} {176} (2012) 979--1013.


\bibitem[HV07]{HV07}
J.W. Helton, V. Vinnikov:
\textit{Linear matrix inequality representation of sets},
{Commun. Pure Appl. Math.} {60} (2007) 654--674.

\bibitem[HJ12]{HJ12}
R.A. Horn, C.R. Johnson:
\textit{Matrix analysis}, Cambridge university press, 2012.


\bibitem[KVV14]{KVV+}
D. Kalyuzhnyi-Verbovetski\u\i{}, V. Vinnikov:
{\it
Foundations of free noncommutative function theory},
Amer. Math. Soc., 2014.


\bibitem[Kls14]{Kls+}
C. Kleski:
\textit{Boundary representations and pure completely positive maps},
 J. Operator Theory 71 (2014)  45--62.
 
  \bibitem[K+]{K+}
T.-L. Kriel:
 \textit{An introduction to matrix convex sets and free spectrahedra}, preprint \url{https://arxiv.org/abs/1611.03103}


\bibitem[MSS15]{MSS15}
A.W. Marcus, D.A. Spielman, N. Srivastava:
 \textit{Interlacing families II: Mixed characteristic polynomials and the Kadison--Singer problem},
 {Ann. of Math. (2)} {182} (2015) 327--350.

\bibitem[MS11]{MS11}
P.S. Muhly, B. Solel:
\textit{Progress in noncommutative function theory},
{Sci. China Ser. A} {54} (2011) 2275--2294.

\bibitem[MS98]{MS98}
P.S. Muhly, B. Solel:
 {\it An algebraic characterization of boundary representations,} Nonselfadjoint operator algebras, operator theory, and related topics, 189--196, Oper. Theory Adv. Appl., 104, Birkhäuser, Basel, 1998.

\bibitem[Nem06]{Nem06}
A. Nemirovskii:
\textit{Advances in convex optimization: conic programming},
{plenary lecture}, International Congress of Mathematicians (ICM), Madrid, Spain, 2006.


\bibitem[NT12]{NT12}
T. Netzer, A. Thom:
\textit{Polynomials with and without determinantal representations},
{Linear Algebra Appl.} 437  (2012) 1579--1595.

\bibitem[NC10]{NC10}
M.A. Nielsen, I.L. Chuang:
\textit{Quantum Computation and Quantum Information}, 10th Anniversary Edition, Cambridge University Press,  2011.



\bibitem[Pau02]{Pau02}
V. Paulsen:
{\it Completely bounded maps and operator algebras},
Cambridge University Press, 2002.


\bibitem[Pop10]{Pope10}
G. Popescu:
\textit{Free holomorphic automorphisms of the unit ball of $B(H)^n$},
    {J. reine angew. Math.} {638} (2010) 119--168.


\bibitem[RG95]{RG95}
M. Ramana, A.J. Goldman:
\textit{Some geometric results in semidefinite programming},
{J. Global Optim.} {7} (1995) 33--50.

\bibitem[SIG97]{SIG97}
R.E. Skelton, T. Iwasaki, K.M. Grigoriadis: {\em A Unified Algebraic
Approach to Linear Control Design}, Taylor \& Francis, 1997.

\bibitem[Vin93]{Vin93}
V. Vinnikov:
\textit{Self-adjoint determinantal representations of real plane curves},
{Math. Ann.} 296 (1993) 453--479.


\bibitem[Voi10]{Voi10}
D.-V. Voiculescu:
\textit{Free analysis questions II: The Grassmannian completion and the series
expansions at the origin}, {J. reine angew. Math.} {645} (2010) 155--236.




\bibitem[WW99]{WW99}
C. Webster, S. Winkler:
 \textit{The Krein-Milman theorem in operator convexity},
 {Trans. Amer. Math. Soc.} {351} (1999)  307--322.


\bibitem[Zal17]{Zal+}
 A. Zalar:
 \textit{Operator Positivstellens\"atze for noncommutative polynomials positive on matrix convex sets}, J. Math. Anal. Appl.
 445 (2017) 32--80.
 


\end{thebibliography}
\end{document}